\title{Constructing Directed Cayley Graphs of Small Diameter:  
A Potent Solovay-Kitaev Procedure}
\author{Henry Bradford}
\newtheorem{thm}{Theorem}[section]
\newtheorem{lem}[thm]{Lemma}
\newtheorem{propn}[thm]{Proposition}
\newtheorem{coroll}[thm]{Corollary}
\newtheorem{defn}[thm]{Definition}
\newtheorem{ex}[thm]{Example}
\newtheorem{rmrk}[thm]{Remark}
\DeclareMathOperator{\Ab}{Ab}
\DeclareMathOperator{\Aut}{Aut}
\DeclareMathOperator{\Cay}{Cay}
\DeclareMathOperator{\charac}{char}
\DeclareMathOperator{\diam}{diam}
\DeclareMathOperator{\SL}{SL}
\DeclareMathOperator{\SU}{SU}
\DeclareMathOperator{\Stab}{Stab}
\DeclareMathOperator{\Sym}{Sym}
\begin{document}

\maketitle

\begin{abstract}
Let $\Gamma$ be a group and $(\Gamma_n)_{n=1} ^{\infty}$ be a descending 
sequence of finite-index normal subgroups. 
We establish explicit upper bounds on the diameters of the 
directed Cayley graphs of the $\Gamma/\Gamma_n$, 
under some natural hypotheses on the behaviour of 
power and commutator words in $\Gamma$. 
The bounds we obtain do not depend on a choice of generating set. 
Moreover under reasonable conditions our method 
provides a fast algorithm for constructing 
directed Cayley graphs of diameter satisfying our bounds. 
The proof is closely analogous to the the 
\emph{Solovay-Kitaev procedure}, which only uses commutator words, 
but also only constructs small-diameter 
\emph{undirected} Cayley graphs. 
As an application we give directed diameter bounds 
on finite quotients of two very different groups: 
$\SL_2 (\mathbb{F}_q [[t]])$ (for $q$ even) 
and a group of automorphisms 
of the ternary rooted tree introduced by Fabrykowski and Gupta. 
\end{abstract}

\section{Introduction}

Let $G$ be a finite group, and $S \subseteq G$ be a generating set. 
We denote by $B^+ _S (n)$ 
the set of elements of $G$ expressible as positive words 
of length at most $n$ in $S$. 
The \emph{directed diameter of $G$ with respect to $S$} 
is defined to be: 
\begin{center}
$\diam^+ (G,S) 
= \min \lbrace n \in \mathbb{N} : B^+ _S (n) = G \rbrace$. 
\end{center}

The \emph{directed diameter of $G$}, 
denoted $\diam^+(G)$, 
is now defined to be the maximal value of 
$\diam^+ (G,S)$ 
as $S$ ranges over all generating subsets of $G$. 
By contrast, the \emph{(undirected) diameter of $G$ with respect 
to $S$} is $\diam (G,S) = \diam^+ (G,S \cup S^{-1})$, 
and the \emph{diameter of $G$} is the maximal value of 
$\diam (G,S)$ over $S$. 
Clearly $\diam(G) \leq \diam^+ (G)$ for any $G$. 
The purpose of this paper is to give new upper 
bounds on $\diam^+(G)$ 
for certain families of familiar finite groups, 
to provide fast algorithms for writing elements as positive 
words of length satisfying this bound, 
and to outline a procedure for proving results of this type 
in a more general setting. 

\subsection{Statement of Results}

For the sake of concision in describing 
the algorithmic aspects of our work, 
we introduce the following terminology. 

\begin{defn}
Let $(G_n)_n$ be a sequence of finite groups. 
Let $l_n ,t_n \in \mathbb{N}$ with: 
\begin{equation} \label{navigdefn}
\diam^+ (G_n) \leq l_n
\end{equation}
for all $n$. 
We say that \emph{the directed navigation problem 
for $G_n$ is solvable for the bound 
(\ref{navigdefn}) in time $t_n$} 
if there is a deterministic algorithm which, 
given an index $n$, a generating set $S_n \subseteq G_n$ 
and an element $g \in G_n$, 
outputs in time at most $t_n$ a positive word $w$ in $S_n$ 
of length at most $l_n$ which is equal to $g$ in $G_n$. 
\end{defn}

Our first result concerns congruence quotients of 
the $\mathbb{F}_q [[t]]$-analytic group 
$\SL_2 (\mathbb{F}_q [[t]])$ ($q$ even). 
In \cite{Brad} upper bounds on the (undirected) diameter 
were given for congruence quotients of many analytic 
(virtually) pro-$p$ groups, including $\SL_d (\mathbb{F}_q [[t]])$ 
for $q$ odd or $d \geq 3$. For technical reasons 
related to the structure of the associated Lie algebras, 
the case $d=2$, $q$ even fell beyond the scope of 
the methods of \cite{Brad}. 
Therefore our result here is new even for undirected diameters. 

\begin{thm} \label{SL2MainThm}
Let $\mathbb{F}_q$ be the finite field of even order $q$. 
Let $G(n,q) = \SL_2 (\mathbb{F}_q [t]/(t^n))$. 
Let $\epsilon >0$. There exist an absolute constant $C > 0$ 
such that for all $n \in \mathbb{N}$, 
\begin{equation} \label{SL2MainThmEqn}
\diam^+ \big(G(n,q)\big) 
= O_{q,\epsilon} \big( \log^{C + \epsilon} \lvert G(n,q) \rvert  \big) \text{.}
\end{equation}
Moreover there exists an absolute constant $C^{\prime} > 0$ 
such that the directed navigation problem 
for $G(n,q)$ is solvable for the bound 
(\ref{SL2MainThmEqn}) in time 
$O_{q,\epsilon} \big( \log^{C^{\prime} + \epsilon} \lvert G(n,q) \rvert  \big)$. 
\end{thm}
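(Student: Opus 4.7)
The plan is to apply the general potent Solovay-Kitaev theorem established earlier in the paper to $\Gamma = \SL_2(\mathbb{F}_q[[t]])$ equipped with the principal congruence filtration $\Gamma_n = \ker(\Gamma \to \SL_2(\mathbb{F}_q[t]/(t^n)))$. Since $G(n,q) = \Gamma/\Gamma_n$ and the graded pieces $\Gamma_i/\Gamma_{i+1}$ are identified canonically with the additive group of $\mathfrak{sl}_2(\mathbb{F}_q)$ via $I + t^i A + O(t^{i+1}) \mapsto A$, the task reduces to producing a system of power-and-commutator words whose induced operations on the associated graded satisfy the hypotheses of the general theorem.

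The basic estimates $[\Gamma_i, \Gamma_j] \subseteq \Gamma_{i+j}$, with graded map $(A,B) \mapsto [A,B]$, and $\Gamma_i^2 \subseteq \Gamma_{2i}$, with graded map $A \mapsto A^2$ (using $(I + t^i A)^2 = I + t^{2i} A^2$ in characteristic $2$), are immediate. However, the main obstacle --- and the reason \cite{Brad} had to exclude this case --- is that both operations land inside the one-dimensional centre $\mathbb{F}_q \cdot I$ of $\mathfrak{sl}_2(\mathbb{F}_q)$ in characteristic $2$: the derived subalgebra is $\mathbb{F}_q \cdot I$, and by Cayley--Hamilton $A^2 = \det(A) \cdot I$ for every $A \in \mathfrak{sl}_2(\mathbb{F}_q)$ (using $\mathrm{tr}(A) = 0 = \mathrm{tr}(I)$). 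Neither basic operation alone reaches non-central directions at a new filtration level.

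The substantive step is therefore to exhibit longer potent words --- products built from commutators, squares, and conjugations by lifts of $\SL_2(\mathbb{F}_q)$ --- whose composite action on the associated graded efficiently sweeps out all of $\mathfrak{sl}_2(\mathbb{F}_q)$ in a bounded number of applications. Concretely, I would look for short words $w(g, h)$ in a pair of elements whose leading-term map $(A, B) \mapsto \bar{w}(A, B)$ is a non-trivial Lie polynomial reaching the $e$- and $f$-directions, then iterate in the style of the general procedure. Once such a system is in hand, the general theorem supplies the bound $\diam^+(G(n,q)) = O_{q,\epsilon}(n^{C+\epsilon})$, which matches (\ref{SL2MainThmEqn}) via $\log |G(n,q)| \asymp_q n$. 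The navigation claim follows from the algorithmic form of the general theorem, together with a bootstrapping step that converts an arbitrary generating set $S$ into the chosen potent-word generators in polylogarithmic time per filtration layer, giving the total runtime $O_{q,\epsilon}(\log^{C'+\epsilon} |G(n,q)|)$ after summing over the $n$ layers.
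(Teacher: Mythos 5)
Your setup is correct: you identify the right filtration, the right basic containments, and --- crucially --- the right obstruction (in characteristic $2$ both $[A,B]$ and $A^2$, for $A,B\in\mathfrak{sl}_2(\mathbb{F}_q)$, are scalar multiples of $I$, by Cayley--Hamilton and the fact that $[\mathfrak{sl}_2,\mathfrak{sl}_2]=\mathbb{F}_q\cdot I$). But your proposed resolution does not work, and it is not what the paper does.

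First, the strategy of seeking a Lie polynomial $\bar w(A,B)$ on the associated graded that ``reaches the $e$- and $f$-directions'' cannot succeed: $\mathbb{F}_q\cdot I$ is central and closed under both the bracket and the square map $A\mapsto\det(A)\cdot I$, and conjugation by lifts of $\SL_2(\mathbb{F}_q)$ acts level-wise by the adjoint action, which preserves $\mathbb{F}_q\cdot I$. So every graded operation you allow yourself, applied past the first step, stays trapped in the centre. Second, even if a suitable Lie word existed, inserting it as the approximation in hypothesis (iv) of Theorem \ref{PotentSKP} would be illegal: that hypothesis specifically demands that the approximation be a product of $k$th powers $y_1^{k}\cdots y_A^{k}$, because the entire reason the potent SKP yields a \emph{directed} diameter bound is that such a product is a positive word. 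A commutator word involves inverses and would at best recover an undirected bound (i.e.\ the classical SKP, which is not what is claimed).

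The paper's actual route is a genuinely non-graded computation. Lemma \ref{SL2Lem2} exhibits explicit elements
$D_n(\alpha),E_n(\alpha),F_n(\alpha)\in K_n$ whose entries are chosen polynomials in $t^n$ and $t^{2n}$. Because of the interaction between these lower-order terms under squaring --- e.g.\ the off-diagonal entry of $E_n(\alpha)^2$ is $t^n\alpha\big((1+t^n)+(1+t^n)^{-1}\big)=t^{3n}\alpha(1+t^n)^{-1}$ --- the square lands at filtration level $3n$ (not merely $2n$) with a \emph{non-central} and arbitrarily prescribable leading term. This cancellation spans three filtration layers and is invisible to any leading-term (graded) analysis. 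One then feeds the resulting ``every $z\in K_{3n}$ is a product of three squares from $K_n$ modulo $K_{4n}$'' into Theorem \ref{PotentSKP} with $M_n=K_{\alpha_n}$, $N_n=K_{\beta_n}$, $k_n=2$, $A_n=3$, choosing $\beta_n\geq3\alpha_n$ and $\alpha_n+\beta_n\geq\beta_{n+1}$ so that $\beta_{n+1}\leq\frac{4}{3}\beta_n$; this ratio is what produces the exponent $\log 7/\log(4/3)$. Your proposal does not contain this idea, and without it the argument stalls at the degeneracy you yourself identified.
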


Our second result concerns an automorphism group of 
a regular rooted tree. 
In \cite{Brad1} the (undirected) diameters of congruence 
quotients of \emph{branch} groups acting on rooted trees 
were studied. Polylogarithmic upper bounds were 
obtained in two cases: Grigorchuk's first group and 
the Gupta-Sidki $p$-groups. 
Here we take up a different example: the group 
of automorphisms of the ternary rooted tree 
introduced by Fabrykowski and Gupta \cite{FabGup}. 
  
\begin{thm} \label{FabGupMainThm}
Let $\mathcal{T}_3$ be the ternary rooted tree. 
Let $\Gamma \leq \Aut (\mathcal{T}_3)$ 
be the Fabrykowski-Gupta group. 
Let $\Stab_{\Gamma} (n) \vartriangleleft \Gamma$ 
be the $n$th level stabiliser of $\Gamma$. 
Then there exists an absolute constant $C > 0$, 
such that for all $n \in \mathbb{N}$, 
\begin{equation} \label{FabGupMainThmEqn}
\diam^+ \big(\Gamma /\Stab_{\Gamma}(n)\big) = 
O \big( \log ^{C}\lvert\Gamma /\Stab_{\Gamma}(n) \rvert \big) \text{.}
\end{equation}
Moreover there exists an absolute constant $C^{\prime} > 0$ 
such that the directed navigation problem 
for $\Gamma /\Stab_{\Gamma}(n)$ is solvable for the bound 
(\ref{FabGupMainThmEqn}) in time 
$O \big( \log ^{C^{\prime}}\lvert\Gamma /\Stab_{\Gamma}(n) \rvert \big)$. 
\end{thm}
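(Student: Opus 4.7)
My plan is to derive the theorem from the general directed Solovay--Kitaev machinery set up earlier in the paper, applied to a filtration of $\Gamma$ refining the level stabilisers $\Stab_\Gamma(n)$. Since $\Gamma$ is finitely generated and acts as a $3$-group on each level of $\mathcal{T}_3$, one has $\log|\Gamma/\Stab_\Gamma(n)| = \Theta(3^n)$, so a polylogarithmic bound in the order of the quotient is equivalent to a bound of the form $C^n$ in the tree depth $n$. Verifying the SK hypotheses with only a bounded multiplicative blow-up per level of $\mathcal{T}_3$ will therefore suffice.

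\textbf{Reduction via self-similarity.} The structural input I would exploit is the self-similar embedding $\psi : \Stab_\Gamma(1) \hookrightarrow \Gamma \times \Gamma \times \Gamma$, together with the branching property that a finite-index subgroup $K \leq \Gamma$ satisfies $\psi(K \cap \Stab_\Gamma(1)) \supseteq K \times K \times K$. Given this, each generator of $\Stab_\Gamma(n+1) \cap K$ lifts via $\psi^{-1}$ to a triple of generators of $\Stab_\Gamma(n) \cap K$, so the inductive step reduces to constructing, for each coordinate $i \in \{1,2,3\}$, a short \emph{positive} word $c_i(x, \mathbf{g})$ in $\Gamma$ (with $x$ a generator and $\mathbf{g}$ a bounded-length context) whose $\psi$-image is $(x', \mathbf{1}, \mathbf{1})$, $(\mathbf{1}, x', \mathbf{1})$, or $(\mathbf{1}, \mathbf{1}, x')$ respectively. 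These $c_i$ play the role, in the directed SK scheme, of the ``coordinate commutators'' used in the analogous undirected arguments for Grigorchuk and Gupta--Sidki groups in \cite{Brad1}.

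\textbf{Main obstacle.} The chief technical difficulty is producing the words $c_i$ as \emph{positive} words of bounded length, since the classical identity $[a,t] = a t a^{-1} t^{-1}$ is unavailable. I would exploit that the rooted generator $a$ has order $3$, so $a^{-1} = a^2$ is free, together with the fact that the recursively defined generator $t$ satisfies $t^{-1} \equiv t^{k} \pmod{\Gamma'}$ for a small $k$ (since $\Gamma^{\mathrm{ab}}$ is a finite $3$-group); the resulting commutator-subgroup correction then lies in $\Gamma'$, which carries the branching structure and is absorbed by the recursion. Controlling the word-length of these correction terms level by level is where the bulk of the casework will sit. Once the inductive identities are established, the diameter bound follows from the general theorem, and unwinding the recursion produces the deterministic navigation algorithm with the asserted polylogarithmic runtime.
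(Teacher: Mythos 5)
Your proposal correctly identifies the broad framework---the general Solovay--Kitaev theorem, a filtration refining the level stabilisers, and the self-similar branching structure---but it misidentifies the core mechanism, and that misidentification would cause the argument to fail. You describe constructing ``coordinate commutators'' $c_i$ and then patching up their negative letters by replacing $a^{-1}$ with $a^2$ and $t^{-1}$ with a positive power modulo $\Gamma'$, absorbing the error into the recursion. The trouble is that the elements whose commutators you would take in the SK induction are not generators but arbitrary elements of deep filtration subgroups, and you do not know a short positive representative for their inverses: producing such a representative \emph{is} the directed navigation problem, so this step is circular. Even if one restricted to generators, the exponent needed to invert an approximated element is not a fixed quantity and does not propagate through the recursion in a controlled way.

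The actual mechanism of Theorem \ref{PotentSKP} is not a patched commutator scheme but a replacement of it: hypothesis (iv) asks that every $z \in N_n$ be approximable, modulo $N_{n+1}$, by a product of $k_n$th \emph{powers} $y_1^{k_n}\cdots y_{A_n}^{k_n}$ with $y_i \in M_n$. Powers $y^k$ with $k \geq 2$ are positive words automatically, with no order-of-generator tricks required, and Lemma \ref{HPLem} (namely $(gh)^k g^{-k}\in [M,N]\mho_k(N)$) is what makes $y^k\approx\tilde y^k$ follow from $y\approx\tilde y$, so the approximation stably propagates through the recursion. For the Fabrykowski--Gupta group the paper takes $k_n=3$ throughout and verifies hypothesis (iv) via the explicit cube identities of Lemma \ref{cubecomplem} and the finer filtration $K^{(\times 3^i)} \geq K_{\mathbf{10}}^{(\times 3^i)}\geq K_{\mathbf{20}}^{(\times 3^i)}\geq K_{\mathbf{1}}^{(\times 3^i)}\geq K_{\mathbf{2}}^{(\times 3^i)}\geq L^{(\times 3^i)}\geq K^{(\times 3^{i+1})}$ interpolating between successive powers of $K$. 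Your proposal has no analogue of the cube approximations or of this interpolating filtration, and without them hypothesis (iv) cannot be checked with a bounded per-step cost $A_n$. You would also need the commutator containments of Lemma \ref{FabGupcommlem} to verify hypothesis (ii); these are plausible given the branching structure you invoke, but they have to be established for the specific filtration, which you have not constructed.
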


Once again the conclusion of Theorem \ref{FabGupMainThm} 
is new even for undirected diameters. 
$\Gamma$ and the subgroups $\Stab_{\Gamma} (n)$ 
will be defined in Section \ref{FaGuSect}. 
For now let us simply note that $\Gamma / \Stab_{\Gamma} (n)$ 
is a transitive imprimitive permutation group of degree $3^n$. 

\begin{rmrk}
The proofs of our results 
allow for the explicit computation of all constants. 
We may, for instance, take 
$C = \log(7)/\log(4/3) \approx 6.764$; $C^{\prime} = 2 + \log(4)/\log(4/3) \approx 6.819$ in 
Theorem \ref{SL2MainThm}, and take 
$C = \log(72272200)/\log(3) \approx 16.472$; 
$C^{\prime} = 1+\log(186200)/\log(3) \approx 11.054$ 
in Theorem \ref{FabGupMainThm}. 
\end{rmrk}

Theorems \ref{SL2MainThm} and \ref{FabGupMainThm} 
are both proved as consequences of our main technical result, 
Theorem \ref{PotentSKP}, 
which produces an upper bound on $\diam^+ (\Gamma/\Gamma_n)$ 
whenever $\Gamma$ is a group and $(\Gamma_n)_{n=1} ^{\infty}$ 
is a descending sequence of finite-index normal subgroups 
of $\Gamma$, such that certain properties are satisfied by 
commutators and proper powers in the $\Gamma_n$. 
It is very likely that Theorem \ref{PotentSKP}, 
or variants thereof, will also be applicable to many other groups. 

\subsection{Background and Outline of the Proof}

Estimating the diameters of finite Cayley graphs has been a subject of widespread interest for many years. Motivation comes from the problem of constructing efficient communication networks \cite{Heyd}; analysis of algorithms in computational group theory \cite{BabExp}, and various combinatorial puzzles (card-shuffling; generalizations of the Rubik's cube; the towers of Hanoi; pebble motions on graphs and so on) 
\cite{Diac,KoMiSp}. Owing to the concrete nature of these applications, one often seeks not only good diameter bounds, but also fast algorithms that express a group element as a word in a generating set, the length of which satisfies the bound. This is the navigation problem. Fortunately many of the results on diameter in permutation groups have essentially algorithmic proofs \cite{BaBeSe,BabaHaye}. 
Meanwhile the navigation problem in $\SL_d(\mathbb{F}_p)$ 
(and more generally Chevalley groups over $\mathbb{F}_p$ 
and other finite rings) was studied in \cite{Lars,Rile,KassRile}, 
where fast (sometimes probabilistic) algorithms were described and analyzed for particular generating sets 
(though a good solution to the navigation problem for groups of Lie type realizing the best known diameter bounds for arbitrary or generic generators remains elusive). 
The navigation problem is also of relevance in cryptography, 
in that efficient solutions are an obstruction to the 
construction of secure Cayley hash functions 
(see \cite{BrVdSh,PetiQuis} for a discussion). 

In spite of this impressive progress, much less is known about the directed navigation problem, as was noted in \cite{BabDir}.  This is an unsatisfactory state of affairs, as solutions to many combinatorial puzzles are better modeled by directed as opposed to undirected navigation (consider for instance the practical difficulty of inverting a large-order riffle shuffle of a deck of cards). 
Further, directed navigation is more relevant to the 
cryptanalysis of Cayley hash functions, 
in which a bit-stream is encoded as a \emph{positive} 
word in generators. 
Of the few results available, one of the most impressive is 
\cite{SchPuc}, which addresses the directed navigation 
problem for the symmetric group with respect to random pairs of generators. 
In this paper we introduce a set of tools that allow one 
to attack the directed navigation problem under certain 
group-theoretic conditions. 

The inspiration for our results comes from the 
\emph{Solovay-Kitaev procedure}. 
Given a compact metric group $\Gamma$ and a subset $S$ generating a dense subgroup, the SKP provides a framework for constructing 
a word $w$ in $S$ which approximates a given element $g \in \Gamma$ 
to a prescribed level of accuracy. 
Moreover, the length of $w$ in the word metric defined by $S$ 
is bounded in terms of the distance in $\Gamma$ between $g$ and $w$. 
The first examples to which the SKP was applied were 
the groups $\SU(k)$, 
where the problem of approximating arbitrary elements by words 
in a generating set was motivated by considerations 
coming from quantum computation \cite{DawNie}. 
The SKP has since been applied to other Lie groups 
(for instance by Dolgopyat \cite{Dolgo}, who independently 
discovered a version of the SKP and employed it to elucidate 
spectral properties of semisimple Lie groups). 
It was however also soon noticed that the similar techniques 
were relevant to finitely generated (abstract or profinite) 
groups $\Gamma$ equipped with a profinite metric, 
and that in this setting approximating elements by short words 
is equivalent to proving good diameter bounds for 
finite quotients of $\Gamma$. 
This idea has been exploited in several papers 
\cite{GamSha,Dinai,Dinai1,Brad,Brad1}. 
Moreover the SKP gives a fast solution to the navigation 
problem: this is described explicitly in 
\cite{DawNie,GamSha,Dinai1}, and 
can easily be derived from the proofs of the results in 
\cite{Dinai,Brad,Brad1}. 

How does the SKP work? We assume that there is a neighbourhood $U$ of the identity in $\Gamma$ satisfying two hypotheses. 
The first hypothesis that every element $z$ of $U$ 
lying sufficiently close to the identity is approximable 
by a product of (a bounded number of) commutators $[x_i,y_i]$, 
where $x_i,y_i \in U$ are significantly further from the identity 
than $z$ is. The second, complementary, hypothesis is that for 
$x,y \in U$, the commutator $[x,y]$ is significantly closer 
to the identity than $x$ and $y$. 
It follows from the latter that if the pairs $(x,y)$ and $(\tilde{x},\tilde{y})$ 
are close, then $[x,y]$ and $[\tilde{x},\tilde{y}]$ 
are even closer. 
If $z \in U$ is the error in our existing verbal approximation $\tilde{g}$ to $g \in \Gamma$;  
$[x_1,y_1] \cdots [x_A,y_A]$ is an approximation to $z$ 
(which exists by the first hypothesis) 
and $\tilde{x}_i,\tilde{y}_i$ are verbal approximations to $x_i,y_i$ (which we may assume exist by induction), then  $\tilde{g}[\tilde{x}_1,\tilde{y}_1]\cdots[\tilde{x}_A,\tilde{y}_A]$ is a better verbal approximation to $g$. 

In the present paper we modify this strategy, 
in that we replace the first hypothesis by the requirement that 
$z$ is approximable by a product of $k$th powers $y_i ^k$, 
for $y_i \in U$ and $k \geq 2$ fixed. 
To implement the induction step, 
we must then also strengthen the second hypothesis, 
by requiring that taking $k$th powers moves elements of $U$ 
closer to the identity, as well as commutators. 
As we shall see below (Remark \ref{dimserex}), 
a very natural setting in which the second hypothesis holds is 
when $k=p$ is a prime and $\Gamma$ 
is a residually $p$-finite group, 
equipped with the profinite metric defined by the mod-$p$ dimension series. 
Because it relies heavily on properties of proper powers, 
it seems appropriate to term the new method a \emph{potent} 
Solovay-Kitaev procedure. 
The fact that it yields a \emph{directed} diameter bound 
follows from the fact that the proper powers used to express elements close to the identity are \emph{positive words}. 

A version of the SKP was also used 
by Bourgain and Gamburd \cite{BoGa1,BoGa2} 
(in conjunction with other tools) 
to produce new examples of \emph{expander Cayley graphs}. 
Expanders are sparse finite regular graphs 
with very strong connectivity and mixing properties. 
For instance they have logarithmic (undirected) diameter 
and, which is more, the endpoints of paths of logarithmic 
length are \emph{equidistributed} over the graph. 
Expanders have remarkable and diverse applications 
across pure mathematics, communication theory and theoretical 
computer science; we refer the reader to the excellent 
survey articles \cite{HoLiWi,LubSurv} for an overview of these. 
It would be very interesting to investigate the possibility 
of adapting the \emph{potent} SKP to 
construct new examples of expanders. 

In spite of the obvious analogies between the original 
SKP and our new potent variant, and the relevance 
of the former to approximation problems in real and 
complex Lie groups, the potent SKP 
appears to be predominantly a ``non-analytic'' 
phenomenon: raising elements of a real or complex Lie group 
to a proper power does not generically move them closer to 
the identity. 
Indeed the problem of approximating an arbitrary 
element in a Lie group by a short positive word in an arbitrary 
generating set appears to be open. 
As noted in \cite{DawNie}, a solution to this problem 
for $\SU(d)$ would be of interest in the context of 
quantum computation: the hypothesis of a symmetric 
generating set, although group-theoretically natural, 
has no clear justification when the set of generating matrices 
is interpreted as the instruction set of a quantum computer. 
The potent SKP \emph{does} yield directed diameter bounds for 
quotients of $p$-adic analytic groups, by exploiting their connection with \emph{powerful pro-$p$ groups}, 
but the diameter bounds are rather weak: 
for instance for the groups $G(d,p,n) = \SL_d(\mathbb{Z}/p^n\mathbb{Z})$
we would obtain $\diam^+ \big( G(d,p,n) \big)
 = O_{p,d} \big(\lvert G(d,p,n) \rvert^{1/(d^2-1)} \big)$, 
which compares poorly with the polylogarithmic 
undirected diameter bounds for these groups in \cite{Brad}. 
We discuss the relevance of the potent SKP to $p$-adic 
analytic groups futher in Section \ref{padicsect}. 

As noted above, for any finite group $G$ we have 
$\diam(G) \leq \diam^+ (G)$. 
Somewhat surprisingly, 
there is also a converse inequality due to Babai. 

\begin{thm}[\cite{BabDir} Corollary 2.3] \label{BabaiDirectedThm}
Let $G$ be a finite group. Then: 
\begin{center}
$\diam^+ (G) = O \big( \diam(G) \log\lvert G\rvert ^2 \big)$. 
\end{center}
\end{thm}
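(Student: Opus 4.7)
My strategy is to reduce the theorem to a uniform positive-word bound on the inverses of generators, then attack that bound via a growth and pigeonhole argument on positive Cayley balls.

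First I would reduce to the following sublemma: for every finite group $G$, every generating set $S \subseteq G$ and every $s \in S$, there exists a positive word $w_s$ in $S$ of length $O((\log|G|)^2)$ with $w_s = s^{-1}$ in $G$. Granting this, fix $g \in G$ and a shortest signed word $u = s_1^{\epsilon_1} \cdots s_d^{\epsilon_d}$ in $S \cup S^{-1}$ representing $g$, with $s_i \in S$, $\epsilon_i \in \{\pm 1\}$ and $d \leq \diam(G)$. Substituting $w_{s_i}$ for each occurrence of $s_i^{-1}$ produces a positive word in $S$ equal to $g$ of length $O\bigl(\diam(G)(\log|G|)^2\bigr)$; maximising over $g$ and $S$ yields the claimed bound on $\diam^+(G)$.

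The substantive step is the sublemma. The naive estimate $s^{-1} = s^{\mathrm{ord}(s)-1}$ yields only a positive word of length up to $|G|-1$, which is too weak by a polynomial factor, so the improvement must exploit the entire generating set $S$. My plan is a dichotomy on the growth of the positive balls $B^+_S(r)$. If these balls grow at a geometric rate --- say $|B^+_S(r+1)| \geq (1 + c/\log|G|)|B^+_S(r)|$ for an absolute constant $c>0$ --- then iterating gives $B^+_S(r) = G$ for some $r = O((\log|G|)^2)$, and every element (in particular $s^{-1}$) enjoys a positive representation of that length. Otherwise, the growth stalls at some radius $r_0$ and pigeonhole on the multiplication map $B^+_S(r_0) \times S \to B^+_S(r_0+1)$ yields a short positive relation $e = u_1 \cdots u_\ell$ with $\ell = O(\log|G|)$. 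Using this relation together with the binary expansion of $\mathrm{ord}(s) - 1$, I would assemble positive expressions for the $O(\log|G|)$ auxiliary elements $s^{-2^i}$ and concatenate them into a positive word for $s^{-1}$, again of total length $O((\log|G|)^2)$.

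The principal obstacle will be ensuring that the relations produced in the stalling branch can be pivoted specifically into expressions for $s^{-1}$ rather than merely generic short positive cycles, and that the length bookkeeping across the $O(\log|G|)$ stages of the inductive construction remains additive rather than multiplicative. Babai's original argument in \cite{BabDir} resolves this via a coset-based combinatorial construction that tracks $s$ through a suitable filtration of $G$, and I would follow essentially that setup.
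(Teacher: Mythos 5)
The paper does not prove this theorem; it is quoted from \cite{BabDir} (Corollary~2.3), so there is no internal argument to compare against.

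The sublemma you reduce to is false. Take $G$ cyclic of order $m^2$ with generator $g$, and $S = \{g, g^m\}$. A positive word $g^a(g^m)^b$ equals $g^{-1}$ exactly when $a + bm \equiv -1 \pmod{m^2}$, and minimizing $a+b$ over nonnegative solutions gives $a = b = m-1$: the shortest positive word for $g^{-1}$ has length $2(m-1) = \Theta(|G|^{1/2})$, not $O((\log|G|)^2)$. (The theorem itself is unthreatened, since $\diam(G)$, being a maximum over all generating sets, is of order $|G|$ here via the generating set $\{g\}$.) Because the sublemma fails, neither branch of your dichotomy can salvage it: in this example the positive balls $B^+_S(r)$ grow by only $O(1)$ elements per step, so the geometric-growth branch never fires, and no argument in the stalling branch can manufacture a short positive word for $g^{-1}$ when none exists. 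There are two further problems: even a corrected sublemma asserting that $s^{-1}$ has positive length $O(\diam(G,S)(\log|G|)^2)$ would, after letter-by-letter substitution into a geodesic, give a bound quadratic rather than linear in $\diam(G,S)$; and the relation your pigeonhole produces at the stalling radius $r_0$ has length $\Theta(r_0)$, where $r_0$ can be as large as $\Theta((\log|G|)^2)$, not the $O(\log|G|)$ you assert. You flag the pivot from short relations to inverse-expressions as the hard step, but the fatal gap is upstream: the reduction is to a false statement. Babai's actual argument works globally with the Cayley digraph, exploiting its Eulerian and vertex-transitive structure, rather than inverting generators one at a time.
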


As a result, all groups with polylogarithmic diameter 
also have polylogarithmic directed diameter, 
and where the degree of the polylogarithm in the former is explicitly known, so is that in the latter. 
In spite of this, there are advantages to 
deriving directed diameter bounds without the use of 
Theorem \ref{BabaiDirectedThm}, 
even when good (undirected) diameter bounds are known. 
In particular, the proof of Theorem \ref{BabaiDirectedThm} 
is non-constructive, 
so does not yield any non-trivial solution to the 
directed navigation problem 
(see \cite{BabDir} Section 5 for a discussion of this 
and related problems). 
One can be very confident that the potent SKP 
(either in the form of Theorem \ref{PotentSKP} or with modifications)
will provide solutions to the directed navigation 
problem for many of the other 
$\mathbb{F}_q [[t]]$-analytic groups; 
branch pro-$p$ groups, and Nottingham groups of finite fields
studied in \cite{Brad,Brad1}. 
These solutions will moreover witness directed diameter bounds 
qualitatively similar, if quantitatively weaker, 
than those obtained by combining Theorem \ref{BabaiDirectedThm} 
with the results of \cite{Brad,Brad1}. 
Nevertheless, owing to the availability of Theorem 
\ref{BabaiDirectedThm}, we have opted predominantly to 
illustrate the implementation of the potent SKP 
with examples for which polylogarithmic 
\emph{undirected} diameter bounds were not previously known. 

The paper is structured as follows. In Section \ref{PSKSect} 
we develop the potent Solovay Kitaev procedure in an abstract 
setting, giving sufficient conditions on the behaviour 
of power and commutator words in the sequence $(\Gamma_i)_i$ for 
a good upper bound on the $\diam^+(\Gamma/\Gamma_i)$ to hold. 
In Sections \ref{SL2Sect} and 
\ref{FaGuSect} we prove, respectively, 
Theorems \ref{SL2MainThm} and \ref{FabGupMainThm}. 
In Section \ref{padicsect} we derive from the potent SKP a weak 
upper bound on directed diameters in quotients of $p$-adic 
analytic groups. 
In Section \ref{mixsect} we discuss some 
implications of our results for spectral gaps and mixing 
times of random walks. 

\section{The Procedure} \label{PSKSect}

In this Section we describe the potent Solovay-Kitaev Procedure 
in an abstract group-theoretic context. 
The Procedure is expressed in Theorem \ref{PotentSKP}. 
Sections \ref{SL2Sect} and \ref{FaGuSect} will then 
be devoted to proving that the hypotheses of Theorem 
\ref{PotentSKP} hold in the relevant settings such that 
Theorems \ref{SL2MainThm} and \ref{FabGupMainThm} follow 
immediately. 

We start with an observation to the effect that, 
given an approximation to a group element, 
the $k$th power of the element is well-approximated 
by the $k$th power of the approximation. 
For $N \leq G$ denote by $\mho_k (N)$ the subgroup 
of $G$ generated by all $k$th powers of elements 
of $N$. Note that $\mho_k (N)$ is normal in $G$ 
whenever $N$ is. 

\begin{lem} \label{HPLem}
Let $\Gamma$ be a group; 
let $M , N \vartriangleleft \Gamma$ and let 
$k \in \mathbb{N}_{\geq 2}$. 
Then for all $g \in M , h \in N$, 
\begin{center}
$(gh)^k g^{-k} \in [M,N] \mho_k (N)$. 
\end{center}
\end{lem}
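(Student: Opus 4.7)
The plan is to compute $(gh)^k$ explicitly and then reduce modulo the normal subgroup $H := [M,N]\mho_k(N) \vartriangleleft \Gamma$ (which is normal since $M$ and $N$ both are).

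First, I would establish the elementary identity
\begin{equation*}
(gh)^k = g^k \cdot h^{g^{k-1}} h^{g^{k-2}} \cdots h^{g} \cdot h,
\end{equation*}
where $h^x := x^{-1}hx$. This is a straightforward induction on $k$: starting from the case $k=2$ (where $ghgh = g^{2}(g^{-1}hg)h = g^{2} h^{g} h$), one multiplies by $gh$ on the right and uses $(h^{g^{i}})^g = h^{g^{i+1}}$ to push through another factor of $g$. Rearranging gives
\begin{equation*}
(gh)^k g^{-k} = h^{g^{k-1}} h^{g^{k-2}} \cdots h^{g} \cdot h,
\end{equation*}
a product of $k$ conjugates of $h$, each by a power of $g$.

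Next I would pass to the quotient $\Gamma / H$. For each $i$, the identity $h^{g^i} = h \cdot [h, g^i]$ shows that $h^{g^i} \equiv h \pmod{[N,M]}$, since $h \in N$ and $g^i \in M$. Because $[M,N] \subseteq H$, every factor in the product above becomes equal to $h$ modulo $H$. Therefore
\begin{equation*}
(gh)^k g^{-k} \equiv h^k \pmod{H},
\end{equation*}
and the right-hand side lies in $\mho_k(N) \subseteq H$, giving $(gh)^k g^{-k} \in H$ as required.

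There is really no hard step here; the only thing to be careful about is the ordering of terms in the expansion of $(gh)^k$, and the verification that $H$ is genuinely normal in $\Gamma$ (which uses both that $M,N \vartriangleleft \Gamma$ and the standard fact that $[M,N]$ and $\mho_k(N)$ are each normal in $\Gamma$ under these hypotheses).
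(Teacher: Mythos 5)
Your approach is correct in spirit but takes a genuinely different route from the paper, and there is one small slip to fix.

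The paper's argument is slicker: it passes to the quotient $\Gamma / \langle [g,h] \rangle^{\Gamma}$, in which $g$ and $h$ commute, so $(gh)^k h^{-k} g^{-k}$ dies there; since $\langle [g,h] \rangle^{\Gamma} \leq [M,N]$ (because $M,N \vartriangleleft \Gamma$), one gets $(gh)^k h^{-k} g^{-k} \in [M,N]$, and then conjugating $h^k$ by $g^k$ (normality of $\mho_k(N)$) gives the result. Your route instead expands $(gh)^k$ explicitly into $g^k$ times a product of $g$-conjugates of $h$ and reduces each factor modulo $[M,N]$. Both are valid; yours is more hands-on and makes visible exactly which commutators are being absorbed, while the paper's avoids any bookkeeping.

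The slip: with your convention $h^x = x^{-1}hx$, the identity
\begin{equation*}
(gh)^k = g^k \cdot h^{g^{k-1}} h^{g^{k-2}} \cdots h^{g} \cdot h
\end{equation*}
is right, but "rearranging" gives
\begin{equation*}
g^{-k}(gh)^k = h^{g^{k-1}} h^{g^{k-2}} \cdots h^{g} \cdot h,
\end{equation*}
not $(gh)^k g^{-k}$ as you wrote. This does not sink the proof: you correctly observe that $H := [M,N]\mho_k(N)$ is normal in $\Gamma$, and $(gh)^k g^{-k}$ is the conjugate of $g^{-k}(gh)^k$ by $g^k$, so membership in $H$ is preserved. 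But that extra line needs to be said explicitly; as written, the claimed equality is false. With that one-sentence patch, the proof is complete and correct.
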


\begin{proof}
Let $\langle [g,h] \rangle^{\Gamma}$ be the normal closure of 
$[g,h]$ in $\Gamma$. Then $\langle [g,h] \rangle^{\Gamma} \leq [M,N]$ 
(since $M,N \vartriangleleft \Gamma$), and 
$(gh)^k h^{-k} g^{-k}$ is clearly trivial in 
$\Gamma/\langle [g,h] \rangle^{\Gamma}$ 
(since the images of $g$ and $h$ in the latter quotient commute). 
Thus $(gh)^k h^{-k} g^{-k} \in [M,N]$ and the result follows. 
\end{proof}

The conclusion of Lemma \ref{HPLem} will be useful 
in situations where $[M,N] \mho_k (N)$ is much smaller than $N$. 

\begin{ex} \label{dimserex}
Let $(\Gamma_n)_{n=1} ^{\infty}$ be a descending sequence of finite-index normal subgroups of $\Gamma$. 
Suppose that for all $m,n \in \mathbb{N}$: 
\begin{itemize}
	\item[(i)]  $[\Gamma_n , \Gamma_m] \subseteq \Gamma_{n+m}$;   
	\item[(ii)] $\mho_k (\Gamma_n) \subseteq \Gamma_{kn}$. 
\end{itemize}
Let $n \leq m$ and let $g \in \Gamma_n$, $h \in \Gamma_m$. 
Then by Lemma \ref{HPLem}: 
\begin{center}
	$(gh)^k \equiv g^k \mod \Gamma_{n+m}$. 
\end{center}
It is a classical fact that $(\Gamma_n)_{n=1} ^{\infty}$ 
satisfies conditions 
(i) and (ii) above with $k=p$ a prime when $\Gamma_n$ is the 
\emph{mod-$p$ dimension series} of $\Gamma$. 
Recall that the latter is the sequence 
$(D_n (\Gamma))_{n=1} ^{\infty}$ of normal subgroups of $\Gamma$ given by:
\begin{center}
	$D_n (\Gamma)= \lbrace g \in \Gamma : g-e \in I^n \rbrace$
\end{center}
where $I$ is the \emph{augmentation ideal} of the group algebra $\mathbb{F}_p \Gamma$, 
defined to be the kernel of the \emph{augmentation mapping} $\phi : \mathbb{F}_p \Gamma \rightarrow \mathbb{F}_p$, 
which is given by: 
\begin{center}
	$\phi (\sum^{\prime} \lambda_g \cdot g) = \sum^{\prime} \lambda_g$. 
\end{center}
Alternatively, $D_n (\Gamma)$ may be defined recursively by 
$D_1 (\Gamma)= \Gamma$, $D_{n+1}(\Gamma) = [\Gamma,D_n (\Gamma)] \mho_p (D_{\lceil (n+1)/p \rceil} (\Gamma))$. 

Another example of a sequence $(\Gamma_n)_{n=1} ^{\infty}$ 
in which conditions (i) and (ii) above hold, 
and which will be relevant to Theorem \ref{SL2MainThm}, 
is given below (see Lemma \ref{SL2Lem2}). 
\end{ex}

\begin{thm} \label{PotentSKP}
Let $(M_n)_{n=1} ^{\infty}$, $(N_n)_{n=1} ^{\infty}$ 
be sequences of finite-index normal subgroups in $\Gamma$. 
Let $(A_n)_{n=1} ^{\infty},(k_n)_{n=1} ^{\infty}$ be a sequence of positive integers. 
Suppose that for all $n \in \mathbb{N}$: 
\begin{itemize}
\item[(i)] $N_n \leq M_n$; 
\item[(ii)] $[M_n,N_n] \leq N_{n+1}$; 
\item[(iii)] $\mho_{k_n} (N_n) \leq N_{n+1}$; 
\item[(iv)] For all $z \in N_n$, 
there exist $y_1 , \ldots , y_{A_n} \in M_n$ such that: 
\begin{equation} \label{potentapproxeqn}
y_1 ^{k_n} \cdots y_{A_n} ^{k_n} z^{-1} \in N_{n+1}\text{.} 
\end{equation}
\end{itemize}
Then for all $n \in \mathbb{N}$: 
\begin{equation} \label{PotentSKDiamEqn}
\diam^+ (\Gamma/N_n) 
\leq l_n = \lvert \Gamma:N_1 \rvert \prod_{i=1} ^{n-1} (1+A_i k_i)\text{.}
\end{equation}
Further suppose that for all $m \in \mathbb{N}$, 
the times needed to compute: 
\begin{itemize}
\item[(a)] The product $gh$ 
of given input elements $g,h \in \Gamma/N_m$; 
\item[(b)] The inverse $g^{-1}$ 
of a given input element $g \in \Gamma/N_m$; 
\item[(c)] $N_m y_1 , \ldots , N_m y_{A_n}$, 
given input $1 \leq n \leq m$ and $N_m z$, 
where $y_i \in M_n$ and $z \in N_n$ are as in 
(\ref{potentapproxeqn}) 
\end{itemize}
are at most $f(m)$. 
Then 
the directed navigation problem for $\Gamma/N_n$ is 
solvable for the bound (\ref{PotentSKDiamEqn}) in time: 
\begin{equation} \label{runtimebound}
f(n) \Big( C \lvert S \rvert^{\lvert \Gamma:N_1 \rvert + 1}  \prod_{i=1} ^{n-1} (A_i+1) + \sum_{i=1} ^{n-1} (A_i k_i + 3)
\prod_{j=i} ^{n-2} (A_j + 1) \Big)
\end{equation}
for $C>0$ an absolute constant. 
\end{thm}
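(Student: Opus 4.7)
The plan is to establish the diameter bound (\ref{PotentSKDiamEqn}) by induction on $n$, and then to read off both the navigation algorithm and its running time from the inductive proof. For the base case $n=1$, every element of $\Gamma/N_1$ is expressible as a positive word in $S$ of length at most $\lvert \Gamma:N_1 \rvert = l_1$, obtained (together with a solution to the navigation problem at level one) either by breadth-first search in the directed Cayley graph of $\Gamma/N_1$, or, more crudely, by brute-force enumeration of all positive words in $S$ of length at most $\lvert \Gamma:N_1 \rvert$, at a cost of $O(\lvert S \rvert^{\lvert \Gamma:N_1 \rvert + 1})$ group operations.

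For the inductive step, assume (\ref{PotentSKDiamEqn}) holds at level $n$, and let $g \in \Gamma/N_{n+1}$. The inductive hypothesis produces a positive word $\tilde{g}$ in $S$ of length at most $l_n$ with $\tilde{g} \equiv g \mod N_n$. Set $z := \tilde{g}^{-1} g \in N_n / N_{n+1}$ and invoke hypothesis (iv) to write $z \equiv y_1^{k_n} \cdots y_{A_n}^{k_n} \mod N_{n+1}$ with each $y_i \in M_n$. Applying the inductive hypothesis again, find positive words $\tilde{y}_i$ of length at most $l_n$ with $\tilde{y}_i \equiv y_i \mod N_n$. The crux is then to show that these approximate powers $\tilde{y}_i^{k_n}$ agree with $y_i^{k_n}$ modulo $N_{n+1}$: writing $\tilde{y}_i = y_i h_i$ with $h_i \in N_n$, Lemma \ref{HPLem} (applied with $M = M_n$, $N = N_n$, as justified by hypothesis (i)) gives $\tilde{y}_i^{k_n} y_i^{-k_n} \in [M_n, N_n] \mho_{k_n}(N_n)$, which lies inside $N_{n+1}$ by hypotheses (ii) and (iii). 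Hence $g \equiv \tilde{g}\, \tilde{y}_1^{k_n} \cdots \tilde{y}_{A_n}^{k_n} \mod N_{n+1}$, a positive word of length at most $l_n (1 + A_n k_n) = l_{n+1}$.

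The construction above is plainly algorithmic: level $n+1$ calls level $n$ once to produce $\tilde{g}$ and $A_n$ more times for the $\tilde{y}_i$, along with one appeal to the oracle of hypothesis (c), one inversion, and a bounded number of multiplications. Unfolding the recursion yields a tree with $\prod_{i=1}^{n-1}(A_i+1)$ leaves, each an instance of the base-case subroutine, which accounts for the first summand of (\ref{runtimebound}). The main technical obstacle I anticipate is bounding the non-recursive work at each internal node independently of the word length $l_n$: a naive implementation would re-evaluate $\tilde{g}$ and each $\tilde{y}_i$ as elements of $\Gamma/N_{n+1}$ at cost $O(l_n)$ per call, introducing an unwanted extra factor of $l_n$. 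I propose to circumvent this by value tracking: each recursive call returns not only the word $w$ but also the image of $w$ in $\Gamma/N_{n+1}$, maintained incrementally at an additional cost of $A_n k_n$ multiplications during the concatenation $\tilde{g}\, \tilde{y}_1^{k_n} \cdots \tilde{y}_{A_n}^{k_n}$ (plus $O(1)$ further operations for the inverse, the formation of $z$, and the oracle call). With every operation bounded by $f(n)$, the recurrence $T(n+1) \leq (A_n + 1) T(n) + (A_n k_n + O(1)) f(n)$ unfolds to precisely the bound (\ref{runtimebound}).
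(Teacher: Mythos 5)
Your proof is correct and follows the paper's own argument essentially step for step: the same induction on $n$ with Lemma~\ref{HPLem} and hypotheses (ii)--(iii) supplying the key congruence $\tilde{y}_i^{\,k_n}\equiv y_i^{\,k_n}\bmod N_{n+1}$, the same brute-force base case of cost $O(\lvert S\rvert^{\lvert\Gamma:N_1\rvert+1})$, and the same recursion in which each call returns both the word and its evaluation (the paper's $\mathtt{APPROX}$ is exactly your ``value tracking''), yielding the recurrence $t_{n,i+1}=(A_i+1)t_{n,i}+(A_ik_i+3)f(n)$. The only cosmetic difference is the direction of the decomposition in the Lemma~\ref{HPLem} step (you write $\tilde{y}_i=y_ih_i$ with $h_i\in N_n$ and take $g=y_i$, whereas the paper decomposes $y_i$ through $\tilde{y}_i$ using hypothesis (i) to place $\tilde{y}_i$ in $M_n$), which does not change the substance.
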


\begin{proof}
First let us establish the diameter bound. 
For $n=1$ the conclusion is trivial. 
Suppose by induction that $\diam^+ (\Gamma/N_n) \leq l_n$. 
Let $S_{n+1} \subseteq \Gamma/N_{n+1}$ be a generating set and  
let $S_n$ be the image of $S_{n+1}$ in $\Gamma/N_n$. 
Then $S_n$ generates $\Gamma/N_n$. Let $g \in \Gamma/N_{n+1}$. 
By inductive hypothesis there exists $w \in B_{S_{n+1}} ^+ (l_n)$ 
such that $z = w^{-1} g \in N_n$. By hypothesis (iv) 
there exist $y_1 , \ldots , y_{A_n} \in M_n$ such that 
$y_1 ^{k_n} \cdots y_{A_n} ^{k_n} z^{-1} \in N_{n+1}$. 

By inductive hypothesis there exist, for $1 \leq i \leq A_n$, 
$\tilde{y}_i \in B_{S_{n+1}} ^+ (l_n)$ 
such that $y_i \tilde{y}_i ^{-1} \in N_n$. 
Combining hypotheses (ii) and (iii) with Lemma \ref{HPLem}, 
we have $y_i ^{k_n} (\tilde{y}_i) ^{-k_n} \in N_{n+1}$. 
Then: 
\begin{center}
$g = wz 
\equiv w (\tilde{y}_1)^{k_n} \cdots (\tilde{y}_{A_n})^{k_n} 
\mod N_{n+1}$
\end{center}
and $w (\tilde{y}_1)^{k_1} \cdots (\tilde{y}_{A_n})^{k_n} 
\in B_{S_{n+1}} ^+ (l_n(1+A_n k_n))$. 
The diameter bound follows by induction. 

We now describe and analyze an algorithm 
$\mathtt{APPROX}(n,i,g,S)$, 
which takes as input $n,i \in \mathbb{N}$ with $i \leq n$, 
$g \in \Gamma/N_n$ and $S \subseteq \Gamma/N_n$, and outputs 
both a positive word $\tilde{w} \in F(S)$ of length at most $l_i$ 
and the evaluation $w$ of $\tilde{w}$ in $\Gamma/N_n$, 
with the property that $g \equiv w \mod N_i$. 
The algorithm required by the statement of the Theorem 
will be $\mathtt{APPROX}(n,n,g,S)$. 

First note that $\mathtt{APPROX}(n,1,g,S)$ 
runs in time $O(\lvert S \rvert^{\lvert \Gamma:N_1 \rvert + 1}f(n))$: we may simply compute all products 
of elements in $S$ of length at most $\lvert \Gamma:N_1 \rvert$; 
one of these will agree with $g$ modulo $N_1$. 

Now we employ recursion. 
Given $1 \leq i \leq n-1$, let $(\tilde{w}_i,w_i)$ be the output of 
$\mathtt{APPROX}(n,i,g,S)$. 
Then $z = w_i ^{-1} g \in N_i$. 
Compute $y_1 , \ldots , y_{A_i} \in M_i$ 
as in (\ref{potentapproxeqn}); as hypothesized in (c) above, 
this requires time at most $f(n)$. 

Let $(\tilde{v}_{i,j},v_{i,j})$ 
be the output of $\mathtt{APPROX}(n,i,y_j,S)$. 
The output of $\mathtt{APPROX}(n,i+1,g,S)$ is 
$(\tilde{w}_{i+1},w_{i+1})$, where 
$\tilde{w}_{i+1} = \tilde{w}_i \tilde{v}_{i,1} ^{k_i} \cdots
\tilde{v}_{i,A_i} ^{k_i}$ and 
$w_{i+1} = w_i v_{i,1} ^{k_i} \cdots v_{i,A_i} ^{k_i}$. Our proof of the diameter bound above witnesses 
that $\tilde{w}_{i+1},w_{i+1}$ have the required properties. 

Finally take $t_{n,i} \in \mathbb{N}$ such that 
$\mathtt{APPROX}(n,i,g,S)$ runs in time at most $t_{n,i}$ 
for all $g,S$. As noted above, we may take: 
\begin{center}
$t_{n,1} = C \lvert S \rvert^{\lvert \Gamma:N_1 \rvert + 1}f(n)$
\end{center}
For $1 \leq i \leq n-1$ note that to implement $\mathtt{APPROX}(n,i+1,g,S)$ 
we must call $\mathtt{APPROX}(n,i,h,S)$ 
for $A_i+1$ elements $h$, 
and carry out $A_i k_i +1$ computations of type (a) 
and one each of type (b) and (c). We may therefore take: 
\begin{center}
$t_{n,i} = (A_i +1)t_{n,i} + (A_i k_i +3)$
\end{center}
and the conclusion (\ref{runtimebound}) follows. 
\end{proof}

\begin{rmrk} \label{improvementsremark}
\begin{itemize}
\item[(i)] The statement of Theorem \ref{PotentSKP} is more general than 
we shall need in the setting of Theorems \ref{SL2MainThm} 
and \ref{FabGupMainThm}, where $(k_n)$ will be a constant 
sequence, and $(A_n)$ will be periodic. 
We state Theorem \ref{PotentSKP} in this general form to 
emphasize the adaptability of the potent SKP, 
and its potential applicability to problems 
much more diverse than the applications we give here. 

\item[(ii)] Equally, additional refinements 
to Theorem \ref{PotentSKP} are possible, 
which improve the diameter bounds 
and the runtime of our algorithm. 
For instance, suppose there exists a constant 
$n_0 \in \mathbb{N}$ such that for all $n$, $M_{n+n_0} \leq N_n$. 
Then for any generating set $S \subseteq \Gamma/N_n$ 
and any $1 \leq i \leq n-1$, 
we have  $N_i / N_{i+1} \subseteq B^+ _S (L_i) N_{i+1} /N_{i+1}$, 
where $L_0 = \lvert \Gamma:N_1\rvert$, 
and $L_i = A_i k_i (L_{i-n_0} + \cdots + L_{i-1})$ 
for $i \geq 1$ (with $L_i=0$ for negative indices). Thus: 
\begin{equation} \label{improvermrkeqn}
\diam^+ (\Gamma/N_n) \leq L_0 +\cdots + L_{n-1}\text{.}
\end{equation}
To see that this is a stronger upper bound, 
note that the bound (\ref{PotentSKDiamEqn}) may be expressed as 
$l_n = L^{\prime} _0 + \cdots L^{\prime} _{n-1}$, 
where $L^{\prime} _0 = \lvert \Gamma:N_1\rvert$ 
and $L^{\prime} _i = A_i k_i (L^{\prime} _0 + \cdots + L^{\prime} _{i-1})$ 
for $i \geq 1$. 

\item[(iii)] The initial step of our induction, 
which yields the trivial bounds 
$\diam^+ (\Gamma/N_1) \leq \lvert \Gamma:N_1 \rvert$ 
and a solution to the directed navigation problem 
for $\Gamma/N_1$ in time 
$O(\lvert S \rvert^{\lvert \Gamma:N_1 \rvert + 1})$, 
is far from optimal in many cases. 
For instance $\diam^+ (\SL_2 (q)) = O(\log(q)^c)$ 
for an absolute constant $c$ \cite{Dinai0}, 
which enables improvements to the constants appearing 
in our Theorem \ref{SL2MainThm}. 
\end{itemize}
\end{rmrk}

\section{Proofs for $\SL_2 (\mathbb{F}_q [[t]])$} \label{SL2Sect}

Let $\mathbb{F}_q$ be a finite field of even order $q$, 
let $\mathbb{F}_q [[t]]$ be the power series ring of $\mathbb{F}_q$ 
and let $\Gamma = \SL_2 (\mathbb{F}_q [[t]])$. 
For $n \in \mathbb{N}$, let: 
\begin{center}
$K_n = \Gamma \cap (I_2 + t^n \mathbb{M}_2 (\mathbb{F}_q [[t]])) 
= \ker (\pi_n)$,
\end{center}
where $\pi_n : \Gamma \twoheadrightarrow 
\SL_2 (\mathbb{F}_q [t]/(t^n))$ is the congruence map. 
Hence $(K_n)_n$ is a descending chain of finite-index normal subgroups of $\Gamma$. 

\begin{lem} \label{SL2Lem1}
Let $n,m \in \mathbb{N}$. Then: 
\begin{itemize}
\item[(i)] $[K_n,K_m] \subseteq K_{n+m}$; 

\item[(ii)] $\mho_2 (K_n) \subseteq K_{2n}$. 

\end{itemize}
\end{lem}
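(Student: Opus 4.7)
The plan is to represent elements of $K_n$ explicitly as $I_2 + t^n A$ with $A \in \mathbb{M}_2(\mathbb{F}_q[[t]])$, and verify both claims by direct matrix computation in the ring $\mathbb{M}_2(\mathbb{F}_q[[t]])$, where the ideal structure given by powers of $t$ neatly tracks membership in the $K_r$. Since each $K_r$ is a (normal) subgroup of $\Gamma$ and the sets $[K_n,K_m]$ and $\mho_2(K_n)$ are generated by commutators and squares respectively, it suffices in each case to check the containment on a single commutator or square.

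For (i), given $g = I_2 + t^n A \in K_n$ and $h = I_2 + t^m B \in K_m$, I first compute
\[ gh - hg = t^{n+m}(AB - BA) \in t^{n+m} \mathbb{M}_2(\mathbb{F}_q[[t]]). \]
Next I note that $g^{-1} \in K_n$ and $h^{-1} \in K_m$ (the congruence subgroups, being kernels of $\pi_n,\pi_m$, are closed under inversion), so in particular their entries lie in $\mathbb{F}_q[[t]]$. Rewriting
\[ [g,h] - I_2 = (gh - hg)\, g^{-1} h^{-1} \]
and using that the right-hand side still has entries in $t^{n+m}\mathbb{F}_q[[t]]$ (since $\mathbb{F}_q[[t]]$ is closed under multiplication), we conclude $[g,h] \in K_{n+m}$.

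For (ii), I use the hypothesis that $q$ is even, so $\mathbb{F}_q[[t]]$ has characteristic $2$. For $g = I_2 + t^n A \in K_n$, the linear term in the binomial expansion vanishes:
\[ g^2 = I_2 + 2 t^n A + t^{2n} A^2 = I_2 + t^{2n} A^2 \in I_2 + t^{2n} \mathbb{M}_2(\mathbb{F}_q[[t]]), \]
so $g^2 \in K_{2n}$, and the conclusion follows.

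There is no real obstacle here — these are routine computations in the matrix ring. The only point worth flagging is that the hypothesis $\charac \mathbb{F}_q = 2$ is genuinely used in (ii): in odd characteristic the surviving $2 t^n A$ term would only give $g^2 \in K_n$, not $K_{2n}$, which is why the result is formulated for even $q$. This is exactly what makes the sequence $(K_n)$ fit the framework of Example \ref{dimserex} with $k = 2$, setting up the application of Theorem \ref{PotentSKP} to follow.
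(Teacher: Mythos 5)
Your proof of (ii) is the same as the paper's. For (i), you take a genuinely slicker route: the paper writes out $g^{-1} = I_2 + t^n\tilde{X}$, $h^{-1} = I_2 + t^m\tilde{Y}$ explicitly, derives the relation $X + \tilde{X} + t^n\tilde{X}X = 0$ (and similarly for $Y,\tilde{Y}$) from $g^{-1}g = I_2$, and then substitutes this into the full four-fold expansion of $[g,h]$ to see that all terms below order $t^{n+m}$ cancel. You instead compute the additive (ring) commutator $gh - hg = t^{n+m}(AB - BA)$ in one line and push the factor $g^{-1}h^{-1}$ through using only that $t^{n+m}\mathbb{M}_2(\mathbb{F}_q[[t]])$ is a two-sided ideal — no need to determine the explicit form of the inverses at all. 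This is cleaner and more conceptual; the paper's approach is more self-contained bookkeeping. One small point of care: the identity $[g,h] - I_2 = (gh - hg)g^{-1}h^{-1}$ corresponds to the convention $[g,h] = ghg^{-1}h^{-1}$, while the paper's displayed computation uses $[g,h] = g^{-1}h^{-1}gh$, for which you would instead write $[g,h] - I_2 = g^{-1}h^{-1}(gh - hg)$. Either way the ideal argument closes the gap, so the conclusion is unaffected, but it is worth matching the convention you use elsewhere.
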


\begin{proof}
Let $X,\tilde{X},Y,\tilde{Y} \in \mathbb{M}_2 (\mathbb{F}_q [[t]])$ 
be such that $g = I_2 + t^n X$, $g^{-1} = I_2 + t^n \tilde{X}$, 
$h = I_2 + t^m Y$, $h^{-1} = I_2 + t^m \tilde{Y}$. 
\begin{itemize}
\item[(i)] Since $ g^{-1} \cdot g = h^{-1} \cdot h = I_2$, 
\begin{equation} \label{SL2equivinv}
X+\tilde{X}+t^n \tilde{X}X = Y+\tilde{Y}+t^m \tilde{Y}Y = 0\text{.}
\end{equation}
Thus:
\begin{align*}
[g,h] & = (I_2 + t^n \tilde{X})(I_2 + t^m \tilde{Y})
(I_2 + t^n X)(I_2 + t^m Y)\\
& \equiv I_2 + t^n (X+\tilde{X}) + t^m (Y+\tilde{Y}) 
+ t^{2n} \tilde{X}X + t^{2m} \tilde{Y}Y \\
& \equiv I_2 \mod t^{n+m} \text{    (by (\ref{SL2equivinv}))}
\end{align*}
so $[g,h] \in K_{n+m}$. 

\item[(ii)] $g^2 = (I_2 + t^n X)^2 = I_2 + t^{2n} X^2 \in K_{2n}$ (since $\charac(\mathbb{F}_q)=2$). 

\end{itemize}
\end{proof}

\begin{lem} \label{SL2Lem2}
Let $z \in K_{3n}$. Then there exist $y_1 , y_2 , y_3 \in K_n$ 
such that:
\begin{center}
$y_1 ^2 \cdot y_2 ^2 \cdot y_3 ^2 \cdot z^{-1} \in K_{4n}$. 
\end{center}
\end{lem}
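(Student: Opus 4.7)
My approach is to convert the group equation into a problem in the matrix ring $\mathbb{M}_2(\mathbb{F}_q[t]/(t^{2n}))$. Write $y_i = I + t^n X_i$ with $X_i \in \mathbb{M}_2(\mathbb{F}_q[[t]])$ subject to $\operatorname{tr}(X_i) = t^n \det(X_i)$ (encoding $y_i \in \SL_2$). Since $\charac(\mathbb{F}_q) = 2$, we have $y_i^2 = I + t^{2n} X_i^2$, and in the expansion of $y_1^2 y_2^2 y_3^2$ every cross-product of two or more factors $t^{2n} X_i^2$ carries a factor $t^{4n}$ and so vanishes modulo $K_{4n}$, giving
\[
y_1^2 y_2^2 y_3^2 \equiv I + t^{2n}(X_1^2 + X_2^2 + X_3^2) \pmod{K_{4n}}.
\]
Writing $z = I + t^{3n} Z$, the lemma reduces to finding $X_i$'s, satisfying the trace constraint, with $X_1^2 + X_2^2 + X_3^2 \equiv t^n Z \pmod{t^{2n}}$.

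I would then split $X_i = P_i + t^n R_i$, where $P_i$ has $t$-degree $< n$ with traceless matrix coefficients and $R_i$ has $t$-degree $< n$; the $\SL_2$ condition forces $\operatorname{tr}(R_i) \equiv \det(P_i) \pmod{t^n}$ while the remaining entries of $R_i$ stay free. Since $P_i$ has traceless coefficients over the commutative ring $\mathbb{F}_q[t]$, Cayley--Hamilton in characteristic $2$ gives $P_i^2 = \det(P_i) \cdot I$, a scalar polynomial of degree $< 2n$. Therefore
\[
X_i^2 \equiv \det(P_i) \cdot I + t^n (P_i R_i + R_i P_i) \pmod{t^{2n}}.
\]
Relative to the basis $\{I, E = E_{12}, F = E_{21}\}$ of the characteristic-$2$ Lie algebra $\mathfrak{sl}_2(\mathbb{F}_q)$, the required equation splits into four polynomial congruences modulo $t^n$: the ``low scalar'' cancellation $\sum_i \det(P_i) \equiv 0$, together with one congruence each for the $I$-, $E$-, and $F$-components of $Z$.

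A direct char-$2$ computation shows that for traceless $M = aI + bE + cF$ and general $R = \tau I + \delta E_{11} + \beta E + \gamma F$ one has $MR + RM = (b\gamma + c\beta)I + b\delta E + c\delta F$. Substituting $\operatorname{tr}(R_i) = \det(P_i)$ for $\delta$, the $E$- and $F$-components of $\sum_i (P_i R_i + R_i P_i)$ reduce to $\sum_i b_i \det(P_i)$ and $\sum_i c_i \det(P_i)$, while the $I$-component depends linearly on the free parameters $\beta_i, \gamma_i$ of $R_i$. The plan is first to choose $a_i, b_i, c_i \in \mathbb{F}_q[t]/(t^n)$ solving
\[
\sum_i (a_i^2 + b_i c_i) \equiv 0,\quad \sum_i b_i(a_i^2 + b_i c_i) \equiv Z_E,\quad \sum_i c_i(a_i^2 + b_i c_i) \equiv Z_F \pmod{t^n},
\]
and then to solve the residual linear $I$-congruence for the free $\beta_i, \gamma_i$, which is straightforward once at least one of the $b_i$ or $c_i$ is a unit modulo $t$.

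The principal obstacle is the simultaneous solvability of these three polynomial congruences in the $a_i, b_i, c_i$. Three triples of free polynomials provide ample flexibility: one can for instance take $a_i \in \mathbb{F}_q$ constant so that $\det(P_i) = a_i + b_i c_i$, reducing the higher-$t^k$ equations (for $k \geq 1$) to an $\mathbb{F}_q$-linear system in the new coefficients of $b_i, c_i$, and then verify directly that the ``$t^0$-level'' system over $\mathbb{F}_q$ is surjective onto $\mathbb{F}_q^3$ using all three triples. The explicit construction simultaneously produces an algorithm for computing the $y_i$ from $z$, which is needed for the navigation-problem part of Theorem~\ref{SL2MainThm}.
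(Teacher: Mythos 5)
Your proposal takes a genuinely different route from the paper and is conceptually sound up to its last step, where it stops short of a complete argument.

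The paper's proof is a direct construction: it exhibits three explicit one-parameter families $D_n(\alpha), E_n(\alpha), F_n(\alpha) \in K_n$, each engineered so that its square hits exactly one ``coordinate'' of the target $z$ modulo $t^{4n}$, and then verifies by a short matrix computation that $D_n(t^n\bar a)^2 E_n(\bar b)^2 F_n(\bar c)^2 \equiv z \pmod{t^{4n}}$. No system of equations is ever solved; the choice is hardwired, which is also what makes the linear-time algorithm of Remark~\ref{SL2approxtimermrk} immediate.

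Your reduction is correct as far as it goes: writing $y_i = I + t^n X_i$, using $\charac = 2$ to get $y_i^2 = I + t^{2n}X_i^2$, expanding the product modulo $t^{4n}$, splitting $X_i = P_i + t^n R_i$ with $P_i$ traceless, invoking Cayley--Hamilton $P_i^2 = \det(P_i)I$, and the identity $MR+RM = (b\gamma + c\beta)I + b\delta E + c\delta F$ are all correct and lead to exactly the right system of four polynomial congruences. (Incidentally, the paper's choice is a point of your parameter space: $P_1 = E$, $P_2 = I + \bar b E$, $P_3 = I + \bar c F$, with $\det(P_1) = 0$, $\det(P_2)=\det(P_3)=1$.)

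The gap is in the final solvability claim. You propose taking $a_i \in \mathbb{F}_q$ constant and solving the three congruences $\sum_i \det(P_i)\equiv 0$, $\sum_i b_i\det(P_i)\equiv Z_E$, $\sum_i c_i\det(P_i)\equiv Z_F$ level-by-level, asserting that the $t^k$-level ($k\geq 1$) becomes an $\mathbb{F}_q$-linear system in the new coefficients $(b_{i,k},c_{i,k})$ and that the $t^0$-level is surjective. The linear structure at level $k$ is indeed correct, but the $3\times 6$ coefficient matrix has entries determined by the $t^0$-level data $(a_i, b_{i,0}, c_{i,0})$, and whether it has rank $3$ depends on how you solved the $t^0$ equations, which in turn depends on the target $(Z_E^{[0]}, Z_F^{[0]})$. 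For several natural ``obvious'' choices of $t^0$-solution the matrix degenerates: e.g.\ picking $d_i := a_i^2 + b_{i,0}c_{i,0}$ to be $(1,0,0)$ forces $b_{1,0}=Z_E^{[0]}$, $c_{1,0}=Z_F^{[0]}$, and when $Z_E^{[0]}=Z_F^{[0]}=0$ the first row of the matrix can vanish identically; other choices fail when $Z_E^{[0]}=Z_F^{[0]}=1$, etc. A correct argument along your lines would need a case analysis on $(Z_E^{[0]},Z_F^{[0]})$ choosing a rank-$3$-compatible $t^0$-solution in each case, which is a real lemma you have not supplied. Note that the paper avoids this entirely by \emph{not} restricting to constant $a_i,b_i,c_i$: instead it arranges the polynomials $b_i,c_i$ so that each $\det(P_i)$ is a constant in $\{0,1\}$, after which the ``higher-level'' coupling simply disappears and the $E$- and $F$-congruences collapse to $b_2 \equiv Z_E$, $c_3 \equiv Z_F$. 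If you are attached to your framework, following that lead is the clean way to close the gap.

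One small technical point you should address explicitly: imposing the $\SL_2$ constraint only as $\operatorname{tr}(R_i)\equiv\det(P_i)\pmod{t^n}$ fixes $X_i$ only modulo $t^{2n}$, and you still need to lift to a genuine $y_i \in K_n \leq \SL_2(\mathbb{F}_q[[t]])$. This is routine (Hensel-type lifting, or right-multiplying by $\operatorname{diag}(1, \det^{-1})$) but it is not a step that can be omitted silently, since the statement of the lemma requires $y_i \in K_n$.
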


\begin{proof}
For $\alpha \in \mathbb{F}_q [[t]]$ 
define the following elements of $K_n$: 
\begin{center}
$D_n (\alpha) = \left(
\begin{array}{cc}
1+t^{2n}\alpha & t^n \\
t^n \alpha & 1
\end{array}
\right)$, \\
$E_n (\alpha) = \left(
\begin{array}{cc}
1+t^n & t^n \alpha \\
0 & (1+t^n)^{-1}
\end{array}
\right)$, \\
$F_n (\alpha) = \left(
\begin{array}{cc}
(1+t^n)^{-1} & 0 \\
t^n \alpha & 1+t^n
\end{array}
\right)$. 
\end{center}
Now consider $z \in K_{3n}$ There exist $a,b,c,d \in \mathbb{F}_q [[t]]$ 
such that: 
\begin{center}
$z = I_2 + 
t^{3n} \left(
\begin{array}{cc}
a & b \\
c & d
\end{array}
\right)$
\end{center}
Then $1 = \det(g) = 1 + t^{3n} (a+d) + t^{6n} (ad-bc)$, 
so $a \equiv d \mod t^{3n}$. Set: 
\begin{center}
$y_1 = D_n (t^n \overline{a})$, $y_2 = E_n (\overline{b})$, 
$y_3 = F_n (\overline{c}) \in K_n$. 
\end{center} 
(for any $\overline{a} \equiv a,
\overline{b} \equiv b,
\overline{c} \equiv c \mod t^n$). We compute: 
\begin{center}
$y_1 ^2 \cdot y_2 ^2 \cdot y_3 ^2 
\equiv \left(
\begin{array}{cc}
1+ t^{3n}a & t^{3n}b \\
t^{3n} c & 1 + t^{3n}a
\end{array}
\right)
\equiv z \mod t^{4n}$
\end{center}
as required. 
\end{proof}

\begin{rmrk} \label{SL2approxtimermrk}
It is clear from the proof of Lemma \ref{SL2Lem2} that there 
is an algorithm which, given $z \in K_{3n}$, 
computes the $y_1 , y_2 , y_3$ in time $O(n)$ 
(by reading the coefficients $a,b,c$ modulo $t^n$ 
in our expression for $z$ and substituting 
into our expressions for $y_1 , y_2 , y_3$). 
\end{rmrk}

\begin{proof}[Proof of Theorem \ref{SL2MainThm}]
Let $(\alpha_n)_n$, $(\beta_n)_n$ 
be ascending sequences of integers such that 
(a) $\alpha_n + \beta_n \geq \beta_{n+1}$ 
and (b) $\beta_n \geq 3 \alpha_n$. 
Note that (a) and (b) together imply 
(c) $4 \beta_n / 3 \geq \beta_{n+1}$. 

We define $M_n = K_{\alpha_n},N_n = K_{\beta_n} 
\leq \Gamma$ 
and set $A_n = 3$, $K_n = 2$. We check that these sequences satisfy the hypotheses of Theorem \ref{PotentSKP}. 
Hypothesis (i) is clear; 
hypotheses (ii) and (iii) follow from Lemma \ref{SL2Lem1} 
and the above conditions, 
and hypothesis (iv) follows from Lemma \ref{SL2Lem2} 
and condition (c) above. 

We therefore have: 
\begin{align*}
\diam^+ (\Gamma/N_n) & \leq 7^{n-1} \lvert \Gamma:N_1 \rvert \\
& = O_{\beta_1,q} \big( \log \lvert \Gamma:N_n \rvert^{n\log(7)/\log(\beta_n)}\big)
\end{align*} 
(since $\lvert SL_2 (\mathbb{F}_q [t]/(t^m)) \rvert
= (q^2-1)q^{3m-2}$). 
The bound (\ref{SL2MainThmEqn}) for this 
subsequence of $G(n,q)=\Gamma/K_n$ follows from the 
easy observation that for all $\epsilon > 0$ 
we may take $\alpha_n , \beta_n 
= \Omega_{\epsilon}((\frac{4}{3} - \epsilon)^n)$. 

For the directed navigation problem, 
we observe that multiplying two elements of 
$\SL_2(\mathbb{F}_q [t]/(t^n))$ involves $O(n^2)$ 
multiplications and additions of pairs of elements of 
$\mathbb{F}_q$, so may be achieved in time $O_q(n^2)$. 
Inversion involves only the rearrangement of co-ordinates 
so may be accomplished in linear time, 
as may computing the approximations $y_i$ to a given $z$ 
(by Remark \ref{SL2approxtimermrk}). 
We therefore satisfy conditions (a), (b) and (c) 
of Theorem \ref{PotentSKP} with $f(n)=\beta_n ^2$, 
so from (\ref{runtimebound}), 
we have a solution in time: 
\begin{align*}
O \big(\beta_n ^2 4^n \lvert S \rvert^{\lvert G:K_{\beta_1} \rvert+1}\big)
& = O \big(\lvert S \rvert^{O_{q,\epsilon}(1)} 
\log \lvert \Gamma:N_n \rvert^{2+ \frac{\log(4)}{\log(4/3 - \epsilon)}}\big)\text{.}
\end{align*}
The conclusions of Theorem \ref{SL2MainThm} 
for general $G(n,q)=\Gamma/K_n$ follow from the 
above bounds for $\Gamma/N_m$ by taking $m$ such that 
$N_m \leq K_n \leq N_{m-1}$ and comparing the 
indices of $N_m$ and $K_n$ in $\Gamma$. 
\end{proof}

\begin{rmrk} \normalfont
Set $\Gamma = \SL_d (\mathbb{F}_q[[t]])$ for $d \geq 3$, $q$ even, 
and again take: 
\begin{center}
$K_n = \Gamma \cap (I_2 + t^n \mathbb{M}_2 (\mathbb{F}_q [[t]]))$. 
\end{center}
Slightly modifying the above construction for $\SL_2$, 
it is easy to show that every element of $K_{3n}$ 
may be written modulo $K_{4n}$ as the product of four squares 
of elements in $K_n$ (provided $q$ is sufficiently large, 
depending on $d$). 
It follows that $\diam^+ (\Gamma/K_n) 
= O_{d,q,\epsilon} (\log \lvert \Gamma:K_n \rvert^{C+\epsilon})$ 
for all $\epsilon > 0$, where 
$C = \log(9)/\log(4/3) \approx 7.638$. 
For comparison, the results of \cite{Brad} yield 
$\diam(\Gamma/K_n) = O_{d,q}(\log \lvert \Gamma:K_n \rvert^{C^{\prime}})$, 
where $C^{\prime} = \log(44)/\log(2) \approx 5.459$. 
Thus the bound for $\diam^+$ obtained by combining the 
latter bound for $\diam$ with Theorem \ref{BabaiDirectedThm} 
is asymptotically very slightly better than that obtained 
by applying the potent Solovay-Kitaev procedure directly, 
but does not provide a solution to the directed navigation problem, 
which the potent SKP does. 
\end{rmrk}

\section{The Fabrykowski-Gupta Group} \label{FaGuSect}

Throughout this Section we denote the $n$-fold 
Cartesian product of the set $X$ by $X^{(\times n)}$, 
to avoid possible confusion with 
the $n$-fold product of a subset of a group or monoid. 
For $m \geq 2$ define the \emph{$m$-ary rooted tree} 
to be the graph $\mathcal{T}_{\mathcal{A}}$ 
with vertex set $\mathcal{A} ^*$ 
the set of formal positive words on alphabet $\mathcal{A}$, 
a set of cardinality $m$, 
and edges $(w,wa)$ for $w \in \mathcal{A} ^*$ and 
$a \in \mathcal{A}$. 
The set $\mathcal{A}^n$ of words of length $n$ in $\mathcal{A}$ 
(that is, the set of vertices of $\mathcal{T}_{\mathcal{A}}$ 
at distance $n$ from the \emph{root vertex}, represented by the 
empty word) is known as the \emph{$n$th level set} of $\mathcal{T}_{\mathcal{A}}$. 

The group $\Aut (\mathcal{T}_{\mathcal{A}})$ 
of graph automorphisms of $\mathcal{T}_{\mathcal{A}}$ 
is precisely the set of permutations of $\mathcal{A} ^*$ 
which respect prefixes, and in particular fixes the root vertex. 
The kernel of the action of $\Aut (\mathcal{T}_{\mathcal{A}})$ on the $n$th level set $\mathcal{A}^{n}$ 
will be called the \emph{$n$th level stabiliser} and denoted $\Stab (n)$; 
it is naturally isomorphic to $\Aut (\mathcal{T}_{\mathcal{A}})^{(\times \lvert \mathcal{A} \rvert^n)}$. 
If $\Gamma \leq \Aut (\mathcal{T}_{\mathcal{A}})$ we write $\Stab_{\Gamma} (n)$ for $\Gamma \cap \Stab (n)$. 

For any $\phi \in \Aut (\mathcal{T}_{\mathcal{A}})$, 
there exists a unique $\sigma_{\phi} \in \Sym (\mathcal{A})$ 
such that for any $x \in \mathcal{A}$, there exists a unique $\phi_x \in \Aut (\mathcal{T}_{\mathcal{A}})$ such that: 
\begin{center}
$\phi (x w) = \sigma_{\phi} (x) \phi_x (w)$, for all $w \in \mathcal{A}^*$. 
\end{center}
The induced map $\psi : \phi \mapsto (\phi_x)_{x \in \mathcal{A}} \cdot \sigma_{\phi}$ 
gives an isomorphism \linebreak$\Aut (\mathcal{T}_{\mathcal{A}}) 
\rightarrow \Aut (\mathcal{T}_{\mathcal{A}}) \wr \Sym (\mathcal{A})$. 
Note that the level stabilisers may be described recursively by 
$\Stab (0) = \Aut (\mathcal{T}_{\mathcal{A}})$ and 
$\Stab (n+1) = \psi^{-1} (\Stab (n)^{(\times \lvert \mathcal{A} \rvert)})$. 

Of particular interest among the subgroups of $\Aut (\mathcal{T}_{\mathcal{A}})$ 
are those whose action on $\mathcal{T}_{\mathcal{A}}$ 
is \emph{branch}. 
Our characterization of such groups is based on that appearing in \cite{Bar}. 

\begin{defn} \label{branchdefn}
Let $\Gamma \leq \Aut (\mathcal{T}_{\mathcal{A}})$. 
$\Gamma$ is \emph{(regular) branch} if: 
\begin{itemize}
\item[(i)] The action of $\Gamma$ on $\mathcal{A}$ is transitive; 
\item[(ii)] $\psi (\Stab_{\Gamma} (1)) \leq \Gamma^{(\times \lvert \mathcal{A} \rvert)}$; 
\item[(iii)] $\Gamma$ has a finite-index subgroup $K$ such that $K^{(\times \lvert \mathcal{A} \rvert)} \leq \psi(K)$. 
\end{itemize}
We will simply say that a group $\Gamma$ \emph{branches over $K$} when the alphabet $\mathcal{A}$ 
and the action of $\Gamma$ on $\mathcal{A}^*$ is clear. 
\end{defn}

Henceforth we usually suppress the map $\psi$ from expressions and identify subgroups of $\Gamma$ 
with their image under $\psi$, 
so we may for instance speak of 
$K^{(\times \lvert \mathcal{A} \rvert)}$ as a subgroup of $K$; 
$\Stab_{\Gamma} (n)$ as a subgroup of $\Gamma ^{(\times \lvert \mathcal{A} \rvert^n)}$ and so on. 

We are now ready to define $\Gamma$. 
Let $\mathcal{A} = \lbrace 0,1,2 \rbrace$ and write 
$\mathcal{T}_{\mathcal{A}} = \mathcal{T}_3$. 
The \emph{Fabrykowski-Gupta} group is the subgroup $\Gamma$ 
of $\Aut (\mathcal{T}_3)$ which is generated by the two 
automorphisms $a,b$ defined by: 
\begin{center}
$a(0w)=1w, a(1w)=2w, a(2w)=0w$, \\
$b(0w)=0(aw), b(1w)=1w, b(2w)=2(bw)$. 
\end{center}
That is, $a$ cyclically permutes the subtrees rooted at $0$, $1$ 
and $2$, while $b \in \Stab_{\Gamma} (1)$ is defined recursively 
by $b=(a,1,b)$. It is easily seen that $a$ and $b$ have order $3$. 

Let $K = [\Gamma,\Gamma]$ be the derived subgroup of $\Gamma$. 
We have $K \leq \Stab_{\Gamma} (1)$, since 
$\Gamma / \Stab_{\Gamma} (1) \cong C_3$ is abelian. 

Consider the following elements of $K$: 
\begin{center}
$x_1 = [a,b] = (b^{-1}a,a^{-1},b)$\\
$x_2 = [a,x_1] = (ba, a^{-1}ba^{-1},ab)$. 
\end{center}

\begin{propn} \label{branchingpropn}
\begin{itemize}
\item[(i)] $\Gamma$ branches over $K$; 

\item[(ii)] $\Gamma / K \cong C_3 \times C_3$, 
with basis $Ka,Kb$; 

\item[(iii)] $K/K^{(\times 3)} \cong C_3 \times C_3$, 
with basis $K^{(\times 3)} x_1,K^{(\times 3)} x_2$. 

\end{itemize}
\end{propn}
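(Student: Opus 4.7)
The plan is to establish (ii) first, then (i), then (iii), since (ii) identifies the abelian quotient used throughout. For (ii), since $a$ and $b$ both have order $3$, $\Gamma/K$ is a priori a quotient of $C_3 \times C_3$, so it suffices to exhibit a surjection $\Gamma \twoheadrightarrow C_3 \times C_3$ with $a \mapsto (1, 0)$ and $b \mapsto (0, 1)$. The first coordinate is the natural map $\sigma: \Gamma \twoheadrightarrow \Gamma/\Stab_\Gamma(1) \cong C_3$. For the second, I would define $\tilde{\eta}: \Stab_\Gamma(1) \to C_3$ by $\tilde{\eta}(g) = \sigma(g_0) + \sigma(g_1) + \sigma(g_2)$, where $\psi(g) = (g_0, g_1, g_2)$; this is evidently a homomorphism with $\tilde{\eta}(b) = \sigma(a) = 1$, and it is $\Gamma$-invariant because conjugation by $a$ cyclically permutes the three coordinates. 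Since $\Gamma/\Stab_\Gamma(1) \cong C_3$ and $3\eta(a) = \tilde{\eta}(a^3) = 0$ holds automatically in the target $C_3$, $\tilde{\eta}$ extends to a homomorphism $\eta: \Gamma \to C_3$ with $\eta(a) = 0$; then $(\sigma, \eta)$ is the required surjection.

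For (i), transitivity is immediate from the definition of $a$, and $\psi(\Stab_\Gamma(1)) \leq \Gamma^{(\times 3)}$ holds by definition of the level stabiliser. The substantive condition is $K^{(\times 3)} \leq \psi(K)$. Computing $[x_1, b]$ directly from $x_1 = (b^{-1}a, a^{-1}, b)$ and $b = (a, 1, b)$ yields $[x_1, b] = (c, 1, 1)$ with $c = b^{-1}aba^{-1} = x_1^b$, a $\Gamma$-conjugate of $x_1$; since $\Gamma$ is $2$-generated, $K$ is the normal closure in $\Gamma$ of any such conjugate. Conjugating $(c, 1, 1) \in \psi(K)$ by elements of $\psi(K)$ replaces its first coordinate by $h c h^{-1}$ for $h$ ranging over the first-coordinate projection $\pi_0(\psi(K))$; the values $\pi_0(\psi(x_1)) = b^{-1}a$, $\pi_0(\psi(x_2)) = ba$, and $\pi_0(\psi(x_2 x_1^{-1})) = b^2$ together show $\pi_0(\psi(K)) = \Gamma$. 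Hence $(K, 1, 1) \leq \psi(K)$; cyclic rotation of coordinates via conjugation by $\psi(a)$ then gives $(1, K, 1), (1, 1, K) \leq \psi(K)$ as well, so $K^{(\times 3)} \leq \psi(K)$.

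For (iii), $\psi$ together with (i) induces an embedding $K/K^{(\times 3)} \hookrightarrow \Gamma^{(\times 3)}/K^{(\times 3)} \cong (\Gamma/K)^{(\times 3)} \cong \mathbb{F}_3^6$, where the last identification uses (ii). The image is the $\mathbb{F}_3$-span of the $\psi(\Gamma)$-conjugates of $\bar{x}_1$; but since conjugation by $\psi(\Stab_\Gamma(1))$ acts trivially on the abelian target, only the $\langle a \rangle$-action contributes, yielding the three cyclic rotations $\bar{x}_1, \bar{x}_1^a, \bar{x}_1^{a^2}$. A short computation gives $\bar{x}_1 = ((1, -1), (-1, 0), (0, 1))$ and $\bar{x}_2 = ((1, 1), (1, 1), (1, 1))$, and verifies $\bar{x}_1 + \bar{x}_1^a + \bar{x}_1^{a^2} = 0$ (so the image is $2$-dimensional) and $\bar{x}_2 = \bar{x}_1 - \bar{x}_1^a$ (so $\bar{x}_2$ also lies in the image). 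Since $\bar{x}_1, \bar{x}_2$ are evidently linearly independent in $\mathbb{F}_3^6$, they form a basis of the image, establishing (iii). The principal technical obstacle is the wreath-product bookkeeping, particularly the verification that $\pi_0(\psi(K)) = \Gamma$ in (i) and the linear-algebra checks in (iii); once these are in hand the remaining arguments are routine.
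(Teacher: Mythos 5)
Your proposal is correct, and on balance it follows a similar overall strategy to the paper's while differing in how individual parts are executed. For (i), the paper simply cites Proposition~6.2 of Bartholdi--Grigorchuk \cite{BartGrig}, whereas you give a self-contained argument: computing $[x_1,b]=(x_1^b,1,1)\in\psi(K)$, observing that $K$ is the normal closure of $x_1^b$ (as $\Gamma$ is two-generated), and then checking $\pi_0(\psi(K))=\Gamma$ via the first coordinates of $x_1$, $x_2$ and $x_2x_1^{-1}$ so that conjugation inside $\psi(K)$ sweeps out all of $(K,1,1)$. That is a genuine contribution beyond the paper's proof, which delegates this step. For (ii), the paper exhibits the surjection by composing $\Gamma\twoheadrightarrow C_3\wr C_3$ (the level-$2$ action) with abelianization, while you build the second coordinate directly as the $\Gamma$-invariant ``sum of sections mod $\Stab(1)$'' homomorphism on $\Stab_\Gamma(1)$ and extend across the semidirect product $\Gamma=\Stab_\Gamma(1)\rtimes\langle a\rangle$. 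These are really the same map written in different ways; your version is more hands-on, the paper's more conceptual, but each buys essentially the same thing. You should make the semidirect-product extension step slightly more explicit than ``$\tilde\eta$ extends to $\eta$'': the correct statement is that $\Gamma=\Stab_\Gamma(1)\rtimes\langle a\rangle$ and $\tilde\eta$ is $\langle a\rangle$-invariant, whence $\eta(ga^i):=\tilde\eta(g)$ is a well-defined homomorphism. For (iii), your argument is the same as the paper's --- embed $K/K^{(\times 3)}$ into $(\Gamma/K)^{(\times 3)}$, observe only the $\langle a\rangle$-action matters, and do the $\mathbb{F}_3$-linear algebra --- just written fully in coordinates. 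One small slip: with the coordinate-rotation convention you chose, $\bar x_1-\bar x_1^a$ comes out to $-\bar x_2$ rather than $\bar x_2$; this sign depends on whether conjugation by $a$ rotates the wreath coordinates forward or backward and is immaterial to the span argument, but it is worth flagging so a reader does not think the computation is inconsistent.
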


\begin{proof}
(i) is proved as Proposition 6.2 in \cite{BartGrig}. 
(ii) and (iii) also follow easily from the results of 
\cite{BartGrig} Section 6, however for the sake of completeness 
we give a self-contained proof. 

For (ii), $\Gamma / K$ is certainly a quotient of $C_3 \times C_3$, 
since $\Gamma$ is generated by two elements of order $3$. 
On the other hand, there is a natural homomorphism 
$\Gamma \rightarrow C_3 \wr C_3$ (with kernel $\Stab_{\Gamma} (2)$). 
Inspection of the action of $a$ and $b$ on $\mathcal{T}_3$ 
confirms that this homomorphism is surjective. 
But $(C_3 \wr C_3)^{\Ab} \cong C_3 \times C_3$. 

For (iii), note that by embedding 
$K \leq \Stab_{\Gamma} (1) \hookrightarrow \Gamma^{(\times 3)}$, 
$K / K^{(\times 3)}$ is naturally a subgroup of 
$(\Gamma/K)^{(\times 3)}$, so by (ii) 
is an elementary abelian $3$-group. 
Moreover we have that $K$ is the normal closure of $x_1$. 
Consider the action of $\Gamma$ on $K / K^{(\times 3)}$ 
by conjugation. 
Since $b$ acts trivially, $K / K^{(\times 3)}$ 
is generated by the images of $x_1$, $x_1 ^a$ and $x_1 ^{a^2}$. 

Now $x_1 , x_1 ^a$ are non-zero and independent modulo 
$K^{(\times 3)}$ ($x_1 ^a$ has non-zero $a$-component in 
the $3$rd co-ordinate, which $x_1$ does not, for instance). 
However $x_1 ^{a^2} \equiv (x_1 ^{a} x_1)^{-1} \mod K^{(\times 3)}$. 
Hence $K / K^{(\times 3)} \cong C_3 \times C_3$ 
is spanned by $x_1$ and $x_1 ^a$, 
or equivalently by $x_1$ and $(x_1 ^a)^{-1} x_1 = x_2$. 
\end{proof}

Thus we have a descending sequence of finite-index normal subgroups: 
\begin{center}
$\Gamma \geq K \geq K^{(\times 3)} \geq K^{(\times 9)} 
\geq \ldots \geq K^{(\times 3^m)} \geq \ldots$
\end{center}
with $\lvert \Gamma : K^{(\times 3^m)} \rvert = 3^{3^m + 1}$. 
Moreover $K^{(\times 3^m)} \leq \Stab_{\Gamma} (m+1)$ 
for all $m \in \mathbb{N}$. 
Since, by \cite{BartGrig} Proposition 6.5: 
\begin{equation} \label{FabGupStabindex}
\lvert \Gamma : \Stab_{\Gamma} (m+1) \rvert = 3^{3^m + 1}
\end{equation}
we conclude the following. 

\begin{coroll} \label{KStabcoroll}
$K^{(\times 3^m)} = \Stab_{\Gamma} (m+1)$ for all $m \geq 1$. 
\end{coroll}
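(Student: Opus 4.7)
The plan is to prove equality by comparing indices of two nested subgroups of the same finite index in $\Gamma$. We have already noted that $K^{(\times 3^m)} \leq \Stab_{\Gamma}(m+1)$ for every $m \geq 1$, so it suffices to show the two subgroups have the same index in $\Gamma$.

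First I would establish that $\lvert \Gamma : K^{(\times 3^m)} \rvert = 3^{3^m + 1}$ by induction on $m$. The base case $m = 0$ gives $\lvert \Gamma : K \rvert = 9 = 3^{1+1}$ from Proposition \ref{branchingpropn}(ii). For the inductive step, Proposition \ref{branchingpropn}(iii) yields $\lvert K : K^{(\times 3)} \rvert = 9$, and hence, taking $3^{m-1}$-fold products componentwise (since $K^{(\times 3^m)}$ sits inside $K^{(\times 3^{m-1})}$ as the direct product of $3^{m-1}$ copies of $K^{(\times 3)} \leq K$),
\begin{equation*}
\lvert K^{(\times 3^{m-1})} : K^{(\times 3^m)} \rvert = 9^{3^{m-1}} = 3^{2 \cdot 3^{m-1}}\text{.}
\end{equation*}
Multiplying up the chain $\Gamma \geq K \geq K^{(\times 3)} \geq \cdots \geq K^{(\times 3^m)}$ produces
\begin{equation*}
\lvert \Gamma : K^{(\times 3^m)} \rvert = 3^{2} \cdot \prod_{j=1}^{m} 3^{2 \cdot 3^{j-1}} = 3^{2 + 2 \cdot \frac{3^m - 1}{2}} = 3^{3^m + 1}\text{.}
\end{equation*}

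Next, by \cite{BartGrig} Proposition 6.5 (quoted as equation (\ref{FabGupStabindex}) above), $\lvert \Gamma : \Stab_{\Gamma}(m+1) \rvert = 3^{3^m + 1}$ as well. Since $K^{(\times 3^m)} \leq \Stab_{\Gamma}(m+1)$ and both have the same finite index in $\Gamma$, the two subgroups coincide, as required.

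There is no real obstacle here: the content of the corollary is essentially bookkeeping, given Proposition \ref{branchingpropn} and the index formula (\ref{FabGupStabindex}) from \cite{BartGrig}. The only mild subtlety to flag is verifying that the componentwise identification of $K^{(\times 3^m)}$ as a subgroup of $K^{(\times 3^{m-1})}$ is indeed a direct product of $3^{m-1}$ copies of $K^{(\times 3)} \leq K$, so that the index multiplies as asserted; this is immediate from the definition of the iterated Cartesian product together with the branching embedding $\psi$.
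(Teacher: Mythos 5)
Your proof is correct and takes essentially the same approach as the paper: show both subgroups have index $3^{3^m+1}$ in $\Gamma$, cite equation (\ref{FabGupStabindex}) for the stabiliser and use Proposition \ref{branchingpropn} (ii), (iii) together with the branching structure for the iterated product, then conclude equality from the nesting $K^{(\times 3^m)} \leq \Stab_\Gamma(m+1)$. The only difference is that you spell out the telescoping index computation that the paper states without comment.
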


We introduce some new notation. 
For $x \in \Gamma$, let 
$\mathbf{0}(x),\mathbf{1}(x),\mathbf{2}(x) 
\in \Gamma^{(\times 3)}$ be given by: 
\begin{center}
$\mathbf{0}(x) = (x,1,1)$, 
$\mathbf{1}(x) = (x,x^{-1},1)$, 
$\mathbf{2}(x) = (x,x^{-2},x)$
\end{center}
so that $\mathbf{1}(x)=[a,\mathbf{0}(x)]$ 
and $\mathbf{2}(x)=[a,\mathbf{1}(x)]$. 
Define the subgroups:
\begin{center} 
$L = \langle x_2 , K^{(\times 3)} \rangle$, 
$K_{\mathbf{1}} ^{(\times 3)} 
= \langle \mathbf{1}(x_1),\mathbf{2}(x_1),L^{(\times 3)} \rangle$, 
$K_{\mathbf{2}} ^{(\times 3)} 
= \langle \mathbf{2}(x_1),L^{(\times 3)} \rangle$
\end{center}
(further writing $K_{\mathbf{0}} ^{(\times 3)} 
= K^{(\times 3)}$)
and for $i \geq 2$, $r \in \lbrace 0,1,2 \rbrace$ 
define recursively: 
\begin{center} 
$K_{\mathbf{r}} ^{(\times 3^i)} 
= (K_{\mathbf{r}} ^{(\times 3)})^{(\times 3^{i-1})}$. 
\end{center}
We therefore have, for each $i \geq 1$, 
a descending chain of subgroups: 
\begin{center}
$K^{(\times 3^{i+1})} \leq L^{(\times 3^i)} 
\leq K_{\mathbf{2}} ^{(\times 3^i)} 
\leq K_{\mathbf{1}} ^{(\times 3^i)} 
\leq K^{(\times 3^i)}$. 
\end{center}
Further define, for $r,s \in \lbrace 0,1,2 \rbrace$, the subgroup: 
\begin{center}
$K_{\mathbf{rs}} ^{(\times 9)} 
= \langle L^{(\times 9)} \cup \lbrace \mathbf{tu}(x_1) 
: t+3u \geq r+3s \rbrace \rangle$
\end{center}
and for $i \geq 3$ define recursively: 
$K_{\mathbf{rs}} ^{(\times 3^i)} = 
(K_{\mathbf{rs}} ^{(\times 9)})^{(\times 3^{i-2})}$. 
Thus for $i \geq 2$, 
\begin{center}
$K_{\mathbf{1}} ^{(\times 3^i)} 
\leq K_{\mathbf{20}} ^{(\times 3^i)}
\leq K_{\mathbf{10}} ^{(\times 3^i)}
\leq K_{\mathbf{00}} ^{(\times 3^i)} = K^{(\times 3^i)}$; \\
$K_{\mathbf{2}} ^{(\times 3^i)} 
\leq K_{\mathbf{21}} ^{(\times 3^i)}
\leq K_{\mathbf{11}} ^{(\times 3^i)}
\leq K_{\mathbf{01}} ^{(\times 3^i)}
= K_{\mathbf{1}} ^{(\times 3^i)}$; \\
$L^{(\times 3^i)}  
\leq K_{\mathbf{22}} ^{(\times 3^i)}
\leq K_{\mathbf{12}} ^{(\times 3^i)}
\leq K_{\mathbf{02}} ^{(\times 3^i)}
= K_{\mathbf{2}} ^{(\times 3^i)}$. 
\end{center}

We stress that the symbols $K_{\mathbf{r}}$ and $K_{\mathbf{rs}}$ 
by themselves have no meaning, 
so that $K_{\mathbf{r}} ^{(\times 3^i)}$ 
and $K_{\mathbf{rs}} ^{(\times 3^i)}$ 
are not $3^i$-fold direct products of groups in any natural way. 
It would be more proper to write these groups as 
$( K^{(\times 3^i)} )_{\mathbf{r}}$ 
and $( K^{(\times 3^i)} )_{\mathbf{rs}}$; 
we only refrain from doing so for reasons of easy readability. 

\begin{lem}
The following subgroups of $\Gamma$ are normal for all $i \geq 0$
 and $r,s \in \lbrace 0,1,2 \rbrace$. 
\begin{itemize}
\item[(i)] $L^{(\times 3^i)}$; 
\item[(ii)] $K_{\mathbf{r}} ^{(\times 3^{i+1})}$; 
\item[(iii)] $K_{\mathbf{rs}} ^{(\times 3^{i+2})}$. 
\end{itemize}
\end{lem}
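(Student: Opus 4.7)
The plan is to verify normality at the level of generators for the base cases ($i=0$), then propagate to higher $i$ by self-similarity. The key structural inputs are Corollary \ref{KStabcoroll} (giving $K^{(\times 3^m)} = \Stab_{\Gamma}(m+1)$, so these subgroups are automatically normal); the explicit $\Gamma$-action on $K/K^{(\times 3)}$ already worked out in the proof of Proposition \ref{branchingpropn}; the identity $\psi(b) = (a,1,b)$; and the self-similarity of $\Gamma$ (its generators $a,b$ have sections in $\Gamma$), which means conjugation by any $g \in \Gamma$ on a subgroup of $\Stab_{\Gamma}(i)$ decomposes into a permutation of the $3^i$ level-$i$ coordinates together with coordinate-wise conjugation by sections of $g$, each again in $\Gamma$.

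For (i) with $i=0$, the claim is $L \vartriangleleft \Gamma$. Since $K^{(\times 3)} \leq L$, it suffices to check that $\langle x_2 \rangle$ is $\Gamma$-invariant inside $K/K^{(\times 3)} \cong \mathbb{F}_3 ^2$. The proof of Proposition \ref{branchingpropn} already yields $b$-invariance (trivial action) and $x_1 ^a \equiv x_1 x_2$; combining these with the relation $x_1 \cdot x_1 ^a \cdot x_1 ^{a^2} \equiv 1 \pmod{K^{(\times 3)}}$ extracts $x_2 ^a \equiv x_2$. For $i \geq 1$, $L^{(\times 3^i)}$ is the subgroup of $\Stab_{\Gamma}(i)$ whose $3^i$ level-$i$ sections lie in $L$; permutation of coordinates and coordinate-wise conjugation by $\Gamma$-elements both preserve this condition (the latter because $L \vartriangleleft \Gamma$), so normality follows.

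For (ii), the $r=0$ case is Corollary \ref{KStabcoroll}. For $r \in \{1,2\}$ at $i=0$, I verify that conjugation by $a$ and $b$ returns the anchor elements $\mathbf{1}(x_1), \mathbf{2}(x_1)$ to $K_{\mathbf{r}} ^{(\times 3)}$. The key identities, all modulo $L^{(\times 3)}$, are $\mathbf{2}(x_1)^a \equiv \mathbf{2}(x_1)$ and $\mathbf{1}(x_1)^a \equiv \mathbf{1}(x_1) \mathbf{2}(x_1)$, where the relation $x_1 ^3 \in K^{(\times 3)} \leq L$ is used to replace $x_1 ^{\pm 2}$ by $x_1 ^{\mp 1}$. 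Using $\psi(b) = (a,1,b)$ together with $x_1 ^b \equiv x_1$, $x_1 ^a \equiv x_1 x_2 \pmod{K^{(\times 3)}}$, and $\mathbf{1}(x_2) \in L^{(\times 3)}$, one obtains $\mathbf{r}(x_1)^b \equiv \mathbf{r}(x_1) \pmod{L^{(\times 3)}}$ for $r=1,2$. For $i \geq 1$, self-similarity upgrades the base case as in (i).

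Part (iii) proceeds analogously at the second level of the tree. The generators $\mathbf{tu}(x_1) = \mathbf{t}(\mathbf{u}(x_1)) \in \Gamma^{(\times 9)}$ have explicit forms, and conjugation by $a,b$ decomposes via their level-2 actions and sections (with $b$'s first-level section at $0$ being $a$, at $1$ being trivial, at $2$ being $b$ itself). The main obstacle is the book-keeping for the filtration $t+3u \geq r+3s$: this ordering is not a symmetric invariant of arbitrary permutations of the nine coordinates, so one must check case-by-case that the specific level-2 action of $\Gamma$ (an imprimitive transitive subgroup of $C_3 \wr C_3$) preserves the filtration modulo $L^{(\times 9)}$. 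The computations closely mirror those of part (ii) one level higher; once the base case is settled, self-similarity again propagates to all $i \geq 1$.
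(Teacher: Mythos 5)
Your proposal follows essentially the same route as the paper: reduce to checking conjugation by the generators $a,b$; reduce from $H^{(\times 3^i)}$ to $H$ by self-similarity (the paper does this by a clean induction noting that conjugation by $a$ permutes the $3$-fold factors while conjugation by $b$ acts on each factor via $a$, $b$, or $1$); and at $i=0$ verify that the single generator of each successive $C_3$-quotient in the chain has $a$- and $b$-commutators landing in the next smaller, already-normal subgroup. The only caveat is that your sketch of part (iii) is somewhat loose — the filtration $t+3u \geq r+3s$ is not a coordinate-wise condition, so the ``imprimitive action on nine coordinates'' framing isn't quite how the argument runs; the paper instead computes $[a,\mathbf{rs}(x_1)]$ and $[b,\mathbf{rs}(x_1)]$ explicitly and runs a downward induction on $r$ starting from $r=2$ (where the denominators $K_{\mathbf{1}}^{(\times 9)}, K_{\mathbf{2}}^{(\times 9)}, L^{(\times 9)}$ are already known normal from (i)--(ii)), which is the structure your sketch would need to make precise.
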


\begin{proof} 
Since $\Gamma$ is generated by $a$ and $b$ 
it suffices to check that each subgroup is preserved 
under conjugation by these two elements. 

Let $H$ be one of $L$, $K_{\mathbf{r}} ^{(\times 3)}$ or  
$K_{\mathbf{rs}} ^{(\times 9)}$. 
We first observe that the normality of $H^{(\times 3^i)}$ 
for all $i \geq 1$ follows from that of $H$. 
For suppose by induction that $H^{(\times 3^i)}$ 
is normal in $\Gamma$ for smaller $i$. 
We have $H^{(\times 3^i)} = (H^{(\times 3^{i-1})})^{(\times 3)}$; 
conjugation by $a$ acts by permuting 
these $H^{(\times 3^{i-1})}$-factors, 
and conjugation by $b$ acts on each 
$H^{(\times 3^{i-1})}$-factor 
by conjugation by $a$, $b$ or $1$. 

Next, recall that $K^{(\times 3)} 
= \Stab_{\Gamma} (2) \vartriangleleft \Gamma$ 
by Corollary \ref{KStabcoroll}. 
$L/K^{(\times 3)} \cong C_3$ is generated by $x_2$, 
and direct calculation yields 
$[a,x_2],[b,x_2] \in K^{(\times 3)}$, 
from which normality of $L$, and hence (i), follows. 

For (ii), note that 
$K_{\mathbf{2}} ^{(\times 3)}/L^{(\times 3)} \cong C_3$ 
is generated by $\mathbf{2}(x_1)$. 
We calculate $[a,\mathbf{2}(x_1)],[b,\mathbf{2}(x_1)]
\in L^{(\times 3)}$, whence 
$K_{\mathbf{2}} ^{(\times 3)} \vartriangleleft \Gamma$. 
Similarly 
$K_{\mathbf{1}} ^{(\times 3)}/K_{\mathbf{2}} ^{(\times 3)} 
\cong C_3$ is generated by $\mathbf{1}(x_1)$. 
We calculate $[a,\mathbf{1}(x_1)],[b,\mathbf{1}(x_1)]
\in K_{\mathbf{2}} ^{(\times 3)}$, whence 
$K_{\mathbf{1}} ^{(\times 3)} \vartriangleleft \Gamma$. 

Finally for (iii), note that 
$K_{\mathbf{20}} ^{(\times 9)} / K_{\mathbf{1}} ^{(\times 9)} ,
K_{\mathbf{21}} ^{(\times 9)} / K_{\mathbf{2}} ^{(\times 9)}, 
K_{\mathbf{22}} ^{(\times 9)} / L^{(\times 9)} \cong C_3$ 
are generated by 
$\mathbf{20}(x_1),\mathbf{21}(x_1)$ and $\mathbf{22}(x_1)$, 
respectively. We compute: 
\begin{align*}
[a,\mathbf{20}(x_1)],[b,\mathbf{20}(x_1)] 
& \in K_{\mathbf{1}} ^{(\times 9)}; \\
[a,\mathbf{21}(x_1)],[b,\mathbf{21}(x_1)] 
& \in K_{\mathbf{2}} ^{(\times 9)}; \\
[a,\mathbf{22}(x_1)],[b,\mathbf{22}(x_1)] 
& \in L^{(\times 9)} 
\end{align*}
so that $K_{\mathbf{20}} ^{(\times 9)},
K_{\mathbf{21}} ^{(\times 9)},
K_{\mathbf{22}} ^{(\times 9)} \vartriangleleft \Gamma$. 
Meanwhile for $r=0,1$ and $s=0,1,2$, 
$K_{\mathbf{rs}} ^{(\times 9)}/K_{\mathbf{(r+1)s}} ^{(\times 9)} 
\cong C_3$ is generated by $\mathbf{rs}(x_1)$. 
We compute $[a,\mathbf{rs}(x_1)],[b,\mathbf{rs}(x_1)] 
\in K_{\mathbf{(r+1)s}} ^{(\times 9)}$, 
so that the normality of $K_{\mathbf{rs}} ^{(\times 9)}$ 
follows from that of $K_{\mathbf{(r+1)s}} ^{(\times 9)}$. 
\end{proof}

\begin{lem} \label{FabGupcommlem}
We have the following inclusions of subgroups. 
\begin{itemize}
\item[(i)] $[K,K^{(\times 27)}] \leq 
K_{\mathbf{10}} ^{(\times 27)}$; 
\item[(ii)] $[K,K_{\mathbf{10}} ^{(\times 27)}] \leq 
K_{\mathbf{20}} ^{(\times 27)}$; 
\item[(iii)] $[K^{(\times 3)},K_{\mathbf{20}} ^{(\times 27)}] 
\leq K_{\mathbf{1}} ^{(\times 27)}$; 
\item[(iv)] $[K^{(\times 3)},K_{\mathbf{1}} ^{(\times 27)}] \leq 
K_{\mathbf{2}} ^{(\times 27)}$; 
\item[(v)] $[K^{(\times 3)},K_{\mathbf{2}} ^{(\times 27)}] \leq 
L^{(\times 27)}$; 
\item[(vi)] $[K^{(\times 9)},L^{(\times 27)}] \leq K^{(\times 81)}$. 
\end{itemize}
\end{lem}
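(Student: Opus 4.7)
The plan is to prove all six inclusions as essentially immediate consequences of two general principles. First, the \emph{branch decomposition}: for $g, h \in \Stab_{\Gamma}(1)$ with $\psi(g) = (g_0, g_1, g_2)$ and $\psi(h) = (h_0, h_1, h_2)$, one has $[g, h] = ([g_0, h_0], [g_1, h_1], [g_2, h_2])$. Second, a \emph{centrality principle}: whenever $M \leq N$ are both normal in $\Gamma$ with $N/M$ cyclic of order three, necessarily $[\Gamma, N] \leq M$. Indeed, the conjugation action of $\Gamma$ on $N/M$ factors through $\Aut(N/M) \cong C_2$, which must be trivial since $\Gamma^{\Ab} \cong C_3 \times C_3$ admits no $C_2$-quotient.

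The preceding lemma establishes normality of all subgroups in the statement, and each successive layer of the filtration has a cyclic quotient of order three by construction. The centrality principle therefore yields for free the following six ``first-level'' identities:
\begin{align*}
[\Gamma, K^{(\times 9)}] &\leq K_{\mathbf{10}}^{(\times 9)}, & [\Gamma, K_{\mathbf{10}}^{(\times 9)}] &\leq K_{\mathbf{20}}^{(\times 9)}, & [\Gamma, K^{(\times 3)}] &\leq K_{\mathbf{1}}^{(\times 3)}, \\
[\Gamma, K_{\mathbf{1}}^{(\times 3)}] &\leq K_{\mathbf{2}}^{(\times 3)}, & [\Gamma, K_{\mathbf{2}}^{(\times 3)}] &\leq L^{(\times 3)}, & [\Gamma, L] &\leq K^{(\times 3)}.
\end{align*}

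Each inclusion in the lemma now follows by iterating the branch decomposition: one iteration for (i) and (ii), two for (iii), (iv), (v), and three for (vi). At each step one peels off an outer $(\times 3)$-factor on the right-hand side and target, while the outer factor on the left is replaced by the subgroup one level deeper in the tree (so $K^{(\times 3)}$ becomes $K$, and the components of $K \leq \Stab_{\Gamma}(1)$ lie in $\Gamma$). For inclusion (iii) one additionally uses the trivial containment $K_{\mathbf{20}}^{(\times 9)} \leq K^{(\times 9)}$ at the second iteration, since $K_{\mathbf{20}}^{(\times 9)}$ is not itself a direct product in its outer coordinate.

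The main obstacle is purely bookkeeping: one must verify, for each of the six cases, that the iteration terminates at one of the six first-level identities above. No further commutator computation is required beyond those recorded in the normality proofs of the preceding lemma.
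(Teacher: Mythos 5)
Your proof is correct, and it takes a genuinely different route from the paper in one key respect. The iteration scheme you describe — repeatedly peeling off the outer $(\times 3)$-factor on both sides via the branch decomposition $\psi([g,h]) = ([g_0,h_0],[g_1,h_1],[g_2,h_2])$, so that each of (i)--(vi) reduces after one, two or three steps to one of six ``first-level'' inclusions $[\Gamma, N] \leq M$, with the extra containment $K_{\mathbf{20}}^{(\times 9)} \leq K^{(\times 9)}$ invoked in case (iii) — is exactly the paper's reduction. Where you diverge is in how the first-level inclusions are established. The paper takes the generator $\bar n$ of $N/M \cong C_3$ and computes $[a,n]$ and $[b,n]$ explicitly in each case (for instance $[a,\mathbf{00}(x_1)]=\mathbf{10}(x_1)$ and $[b,\mathbf{00}(x_1)]=\mathbf{01}(x_1)$). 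Your centrality principle replaces all of this with one soft observation: since $M$ and $N$ are both normal and $N/M \cong C_3$, the conjugation action gives a homomorphism $\Gamma \to \Aut(N/M) \cong C_2$, which factors through $\Gamma^{\Ab} \cong C_3\times C_3$ and is therefore trivial, so $[\Gamma,N]\leq M$. This is a clean, genuinely computation-free argument, and it is valid: it presupposes only the normality lemma, the fact that each successive quotient in the chain has order exactly $3$ (asserted in the preceding lemma and forced in any case by the index count $\lvert K^{(\times 9)}:L^{(\times 9)}\rvert = 3^9$ dividing through a nine-step chain), and Proposition \ref{branchingpropn}(ii). One modest advantage of the paper's explicit computations is that some of them are of independent use in the proof of Lemma \ref{cubecomplem}; but for Lemma \ref{FabGupcommlem} itself your argument is shorter and equally rigorous.
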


\begin{proof}
For (i) we have: 
\begin{center}
$[K,K^{(\times 27)}] 
\leq [\Gamma,K^{(\times 9)}]^{(\times 3)}$
\end{center}
so it suffices to show that 
$[\Gamma,K^{(\times 9)}] \leq K_{\mathbf{10}} ^{(\times 9)}$. 
This holds, since 
$K^{(\times 9)} / K_{\mathbf{10}} ^{(\times 9)} \cong C_3$ 
is generated by $\mathbf{00}(x_1)$, 
and we calculate: 
\begin{center}
$[a,\mathbf{00}(x_1)] = \mathbf{10}(x_1) 
\in K_{\mathbf{10}} ^{(\times 9)}$;\\
$[b,\mathbf{00}(x_1)] = \mathbf{01}(x_1)
\in K_{\mathbf{10}} ^{(\times 9)}$. 
\end{center}
Similarly for (ii) we have: 
\begin{center}
$[K,K_{\mathbf{10}} ^{(\times 27)}] 
\leq [\Gamma,K_{\mathbf{10}} ^{(\times 9)}]^{(\times 3)}$
\end{center}
so it suffices to show that 
$[\Gamma,K_{\mathbf{10}} ^{(\times 9)}] 
\leq K_{\mathbf{20}} ^{(\times 9)}$. 
This holds, since 
$K_{\mathbf{10}} ^{(\times 9)} / K_{\mathbf{20}} ^{(\times 9)} \cong C_3$ 
is generated by $\mathbf{10}(x_1)$, 
and we calculate: 
\begin{center}
$[a,\mathbf{10}(x_1)] = \mathbf{20}(x_1)
\in K_{\mathbf{20}} ^{(\times 9)}$;\\
$[b,\mathbf{10}(x_1)] = \mathbf{01}(x_1) 
\in K_{\mathbf{20}} ^{(\times 9)}$. 
\end{center}
For (iii) we have: 
\begin{center}
$[K^{(\times 3)},K_{\mathbf{20}} ^{(\times 27)}] 
\leq [K^{(\times 3)},K^{(\times 27)}] 
\leq [\Gamma,K^{(\times 3)}]^{(\times 9)}$
\end{center}
so it suffices to show that 
$[\Gamma,K^{(\times 3)}] \leq K_{\mathbf{1}} ^{(\times 3)}$. 
$K^{(\times 3)}/K_{\mathbf{1}} ^{(\times 3)} \cong C_3$ 
is generated by $\mathbf{0}(x_1)$, 
and we calculate: 
\begin{center}
$[a,\mathbf{0}(x_1)] = \mathbf{1}(x_1)
\in K_{\mathbf{1}} ^{(\times 3)}$;\\
$[b,\mathbf{0}(x_1)] = \mathbf{0}(x_2)
\in K_{\mathbf{1}} ^{(\times 3)}$. 
\end{center}
For (iv) we have: 
\begin{center}
$[K^{(\times 3)},K_{\mathbf{1}} ^{(\times 27)}] 
\leq [\Gamma,K_{\mathbf{1}} ^{(\times 3)}]^{(\times 9)}$ 
\end{center}
so it suffices to show that 
$[\Gamma,K_{\mathbf{1}} ^{(\times 3)}] 
\leq K_{\mathbf{2}} ^{(\times 3)}$. 
This is the case, since 
$K_{\mathbf{1}} ^{(\times 3)}/K_{\mathbf{2}} ^{(\times 3)} 
\cong C_3$ is generated by $\mathbf{1}(x_1)$, 
and we may calculate: 
\begin{center} 
$[a,\mathbf{1}(x_1)] = \mathbf{2}(x_1)
\in K_{\mathbf{2}} ^{(\times 3)}$;\\
$[b,\mathbf{1}(x_1)] = \mathbf{0}(x_2)
\in K_{\mathbf{2}} ^{(\times 3)}$. 
\end{center}
For (v) we have: 
\begin{center}
$[K^{(\times 3)},K_{\mathbf{2}} ^{(\times 27)}] \leq 
[\Gamma,K_{\mathbf{2}} ^{(\times 3)}]^{(\times 9)}$ 
\end{center}
so it suffices to show that 
$[\Gamma,K_{\mathbf{2}} ^{(\times 3)}] 
\leq L^{(\times 3)}$. 
This is the case, since 
$K_{\mathbf{2}} ^{(\times 3)}/L^{(\times 3)} 
\cong C_3$ is generated by $\mathbf{2}(x_1)$, 
and we may calculate: 
\begin{center}
$[a,\mathbf{2}(x_1)] = (1,x_1 ^{-3},x_1 ^3)
\in K^{(\times 9)} \subseteq L^{(\times 3)}$ 
(by Proposition \ref{branchingpropn} (iii));\\
$[b,\mathbf{2}(x_1)] = (x_2,1,[b,x_1])
\in L^{(\times 3)}$ (since $[b,x_1] \in K^{(\times 3)}$). 
\end{center}
Finally for (iv) we have: 
\begin{center}
$[K^{(\times 9)},L^{(\times 27)}] \leq [\Gamma,L]^{(\times 27)}$ 
\end{center}
so it suffices to check that $[\Gamma,L] \leq K^{(\times 3)}$. 
This is so because $L/K^{(\times 3)} \cong C_3$ 
is generated by $x_2$ and we calculate: 
\begin{center} 
$[a,x_2] = \big(x_1 ^{-1},(x_1 ^{-1})^a,(x_1 ^{-1})^{a^{-1}} \big)
\in K^{(\times 3)}$;\\
$[b,x_2] \in K^{(\times 3)}$. 
\end{center}
\end{proof}

We now construct our approximations, by products of cubes, 
to elements lying deeper in our chain of subgroups. 

\begin{propn} \label{cubeexprnlem}
Let $i \geq 0$. 
\begin{itemize}
\item[(i)] For all $z \in K^{(\times 3^{i+3})}$, 
there exist $y_1 , \ldots , y_9 \in K^{(\times 3^i)}$ 
such that:
\begin{equation}
z \equiv \prod_{j=1} ^{27} y_i ^3 
\mod K_{\mathbf{10}} ^{(\times 3^{i+3})}
\end{equation}

\item[(ii)] For all $z \in K_{\mathbf{10}} ^{(\times 3^{i+3})}$, 
there exist $y_1 , \ldots , y_{18} \in K^{(\times 3^i)}$ 
such that:
\begin{equation}
z \equiv \prod_{j=1} ^{27} y_i ^3 
\mod K_{\mathbf{20}} ^{(\times 3^{i+3})}
\end{equation}

\item[(iii)] For all $z \in K_{\mathbf{20}} ^{(\times 3^{i+3})}$, 
there exist $y_1 , \ldots , y_{4} \in K^{(\times 3^{i+1})}$
such that:
\begin{equation}
z \equiv \prod_{j=1} ^{4} y_i ^3 
\mod K_{\mathbf{1}} ^{(\times 3^{i+3})}
\end{equation}

\item[(iv)] For all $z \in K_{\mathbf{1}} ^{(\times 3^{i+3})}$, 
there exist $y_1 , \ldots , y_{6} \in K^{(\times 3^{i+1})}$
such that:
\begin{equation}
z \equiv \prod_{j=1} ^{6} y_i ^3 
\mod K_{\mathbf{2}} ^{(\times 3^{i+3})}
\end{equation}

\item[(v)] For all $z \in K_{\mathbf{2}} ^{(\times 3^{i+3})}$, 
there exist $y_1 , \ldots , y_{6} \in K^{(\times 3^{i+1})}$
such that:
\begin{equation}
z \equiv \prod_{j=1} ^{6} y_i ^3 
\mod L ^{(\times 3^{i+3})}
\end{equation}

\item[(vi)] For all $z \in L^{(\times 3^{i+3})}$, 
there exist $y_1 , y_2 , y_{3} \in K^{(\times 3^{i+2})}$
such that:
\begin{equation}
z \equiv \prod_{j=1} ^{3} y_i ^3 
\mod K^{(\times 3^{i+4})}
\end{equation}

\end{itemize}
\end{propn}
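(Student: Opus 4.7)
My plan is to prove each of the six parts by explicit construction, in two stages. First, I would reduce each claim to the base case $i = 0$. Each of the subgroups $K^{(\times 3^{i+k})}$, $K_{\mathbf{r}}^{(\times 3^{i+k})}$, $K_{\mathbf{rs}}^{(\times 3^{i+k})}$ and $L^{(\times 3^{i+k})}$ appearing in the statement decomposes, via the branching structure of $\Gamma$, as the triple direct product of the analogous subgroup with $i$ decremented. Given $z$ in such a subgroup, writing $z = (z_0, z_1, z_2)$ with each $z_\ell$ in the ``one shallower'' subgroup, applying the induction hypothesis to each $z_\ell$, and then setting $y_j = (y_{j,0}, y_{j,1}, y_{j,2})$ coordinate-wise, proves the claim for index $i$. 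So it suffices to verify each part for $i = 0$.

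For the base case $i = 0$, the relevant quotient in each part is an elementary abelian $3$-group whose generators are the coordinate-projections of $x_1$ (namely $\mathbf{00}(x_1), \mathbf{10}(x_1), \mathbf{20}(x_1), \mathbf{1}(x_1), \mathbf{2}(x_1)$) or the element $x_2$, pieced together across the subtrees. For each part I would exhibit $y_j$'s in the appropriate subgroup whose cubes realise these generators modulo the next-smaller subgroup. The stated number of $y_j$'s in each part is guided by the $\mathbb{F}_3$-dimension of the relevant quotient at a given node, up to slack needed to cancel ``off-diagonal'' contributions that arise when a single component of $y_j$ cubes to something leaking across subtrees. The main computational tool is Lemma \ref{HPLem}, which bounds $(gh)^3 g^{-3}$ in $[M,N]\mho_3(N)$; combined with the commutator containments of Lemma \ref{FabGupcommlem}, it shows that such error terms live in subgroups small enough to be absorbed into the required modulus.

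An additional ingredient needed specifically for part (vi) is the observation that $\Gamma/L$ is the Heisenberg group of order $27$, hence of exponent $3$. Indeed, we have $[a, x_1] = x_2 \in L$ and $[b, x_1] \in K^{(\times 3)} \leq L$, so the image of $x_1 = [a, b]$ in $\Gamma/L$ is central. This forces $\Gamma/L$, being a non-abelian group of order $27$ generated by two order-$3$ elements, to be the Heisenberg group over $\mathbb{F}_3$. In particular $g^3 \in L$ for every $g \in \Gamma$, which guarantees that cubes of elements of $K^{(\times 3^{i+2})}$ land in $L^{(\times 3^{i+3})}$ and thus form valid candidates for the $y_j^3$ in part (vi).

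The main obstacle will be the base-case bookkeeping: for each of the six parts one must expand each candidate $y_j$ in its tree-coordinate form, compute the cube, and verify that its image in the next-smaller quotient matches the desired coset of $z$. Each individual computation is mechanical given the rigidity of the quotients, but systematically handling all six parts and keeping track of how cubing redistributes information between tree-levels is the bulk of the work.
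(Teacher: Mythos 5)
Your two-stage strategy — first reducing to $i = 0$ by writing $z$ coordinate-wise and assembling the $y_j$ from per-coordinate solutions, then producing explicit products of cubes generating each elementary abelian quotient — is exactly the strategy the paper takes. The paper's reduction is phrased slightly differently (a direct passage from general $i$ to the innermost base case rather than downward induction on $i$, and for parts (iii)--(vi) the reduction goes one or two levels deeper than $i=0$), but this is cosmetic. The paper does the base-case computation in a dedicated Lemma (\ref{cubecomplem}), which exhibits $\mathbf{000}(x_1)$, $\mathbf{010}(x_1)$, $\mathbf{20}(x_1)$, $\mathbf{01}(x_1)$, $\mathbf{02}(x_1)$ and $\mathbf{0}(x_2)$ as explicit products of cubes (exactly or modulo the next subgroup in the chain); since each quotient $N/N_*$ is generated by such elements and their $a$-conjugates, the stated counts of $y_j$'s fall out. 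This is the "bulk of the work" you anticipate.

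Two small points of comparison. First, you cast Lemma \ref{HPLem} together with Lemma \ref{FabGupcommlem} as the main computational engine for the base case; in the paper those are used to verify the commutator containments (hypothesis (ii) of Theorem \ref{PotentSKP}) and the stability of the cube approximations, but the cube identities of Lemma \ref{cubecomplem} are established by direct calculation in $\Gamma$, not by invoking \ref{HPLem}. Second, your observation that $\Gamma/L$ is the Heisenberg group of order $27$ (hence of exponent $3$) is correct — since $a, b$ both have order $3$ and $x_1 = [a,b]$ is central modulo $L$, the quotient is nonabelian of order $27$ generated by two involution-free elements of order $3$, which forces exponent $3$ — and it does give a pleasing a priori guarantee that cubes of elements of $K^{(\times 3^{i+2})}$ land in $L^{(\times 3^{i+3})}$. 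But the paper doesn't need this as a separate step: it follows for free from the explicit identity $\mathbf{0}(x_2) \equiv x_1^{-3} \bmod K^{(\times 9)}$. So your additional ingredient is sound but redundant given the concrete computations you'd have to do anyway.
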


\begin{lem} \label{cubecomplem}
The following identities hold in $\Gamma$. 
\begin{itemize}
\item[(i)] $\mathbf{000}(x_1) 
= \big( x_1 ^{ba} \mathbf{0}(x_1)^{b^a} \big)^3 \big( x_1 ^{ba} \big)^{-3} \big( \mathbf{0}(x_1)^{b^a} \big)^{-3}$; 

\item[(ii)] $\mathbf{010}(x_1)
= \big( \mathbf{0}(x_1)^{b^a b} \big)^3
\big( x_1 ^{bab} \big)^{3} 
\big( x_1 ^{bab} \mathbf{0}(x_1)^{b^a b} \big)^{-3} 
\big( x_1 ^{ba} \mathbf{0}(x_1)^{b^a} \big)^3 
\big( x_1 ^{ba} \big)^{-3} 
\big( \mathbf{0}(x_1)^{b^a} \big)^{-3}$

\item[(iii)] $\mathbf{20}(x_1) \equiv 
x_2 ^{-3} \big( x_1 ^{ba^{-1}} \big)^{-3} 
\big( x_1 ^{a} \big)^{-3} \big( x_1 ^{b^{-1}} \big)^{-3} 
\mod K_{\mathbf{1}} ^{(\times 9)}$; 

\item[(iv)] $\mathbf{01}(x_1) = x_1 ^3 (x_1 ^b)^{-3}$; 

\item[(v)] $\mathbf{02}(x_1)\equiv 
\big( \mathbf{0}(x_1)(x_1)^{a^{-1}} \big)^3 
\big( x_1 ^{a^{-1}} \big)^{-3}
\mod L^{(\times 9)}$; 

\item[(vi)] $\mathbf{0}(x_2)\equiv x_1 ^{-3} \mod K^{(\times 9)}$. 

\end{itemize}
\end{lem}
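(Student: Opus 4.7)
The six identities split naturally: (i), (ii), and (iv) assert exact equalities in $\Gamma$, while (iii), (v), and (vi) are congruences modulo $K_{\mathbf{1}}^{(\times 9)}, L^{(\times 9)}$, and $K^{(\times 9)}$ respectively. The plan is to verify each identity by direct component-wise calculation in the self-similar presentation $\Gamma \hookrightarrow \Gamma \wr C_3$.

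The first step is to record explicit wreath decompositions of all the building blocks appearing in the statement. From the definitions we have $a = (1,1,1)\rho$ (with $\rho$ the $3$-cycle) and $b = (a, 1, b)$, $x_1 = (b^{-1}a, a^{-1}, b)$, $x_2 = (ba, a^{-1}ba^{-1}, ab)$. From these, by routine substitution, one obtains the decompositions of the various conjugates $x_1^{a^{\pm 1}}, x_1^{b^{\pm 1}}, x_1^{ba}, x_1^{ba^{-1}}, \mathbf{0}(x_1)^{b^a}, \mathbf{0}(x_1)^{b^a b}$, together with $\mathbf{0}(x_1), \mathbf{1}(x_1), \mathbf{2}(x_1)$. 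All of these lie in $\Stab_{\Gamma}(1)$, so conjugation among them is component-wise.

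For (iv), both sides are expanded; since cubing on $\Stab_{\Gamma}(1)$ is component-wise (the first-level permutation being trivial), one verifies that $x_1^3 (x_1^b)^{-3}$ and $\mathbf{01}(x_1)$ agree coordinate by coordinate. Identity (vi) is handled analogously, matching $x_1^{-3}$ with $\mathbf{0}(x_2)$ coordinate-wise modulo $K^{(\times 9)}$. For (i), (ii), (iii), and (v), each factor of the form $(gh)^3$ on the right is analyzed using Lemma \ref{HPLem}: selecting $g \in M$ and $h \in N$ from suitable normal subgroups $M, N \vartriangleleft \Gamma$ in the chain $K \geq K_{\mathbf{10}}^{(\times 3)} \geq \ldots \geq L^{(\times 9)}$, we have $(gh)^3 h^{-3} g^{-3} \in [M, N]$, and the latter commutator subgroup lies within the target subgroup on the left. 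Absorbing these correction terms, the right-hand side simplifies to an explicit product whose coordinate computation reproduces $\mathbf{000}(x_1), \mathbf{010}(x_1), \mathbf{20}(x_1)$, or $\mathbf{02}(x_1)$ on the left, exactly or modulo the stated subgroup.

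The principal obstacle is bookkeeping rather than mathematical depth: every calculation takes place in the finite quotient $\Gamma/\Stab_{\Gamma}(4)$, so each verification is a finite exercise, but the permutation action of $a$ on coordinates and the compounded conjugations must be tracked with care. The role of Lemma \ref{HPLem} is precisely to replace a full Hall--Petresco expansion with a single commutator-subgroup estimate that sits naturally in the descending chain of normal subgroups constructed earlier in this section.
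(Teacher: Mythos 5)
Your overall framing — record the wreath decompositions of $a$, $b$, $x_1$, $x_2$ and their conjugates, then verify coordinate by coordinate — is the right spirit, and it is what the paper does for (i) and what you would do for (iv) and (vi). But the specific role you assign to Lemma \ref{HPLem} does not work, and it hides the actual content of the hardest parts.

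The core problem is that (i), (ii) and (iv) are asserted as \emph{exact} equalities in $\Gamma$, not congruences. Lemma \ref{HPLem} tells you that $(gh)^k g^{-k}$ lies in $[M,N]\mho_k(N)$; it can locate an element modulo a normal subgroup but never recover an exact identity. So ``absorbing correction terms'' into a subgroup of the chain cannot reproduce $\mathbf{000}(x_1)$ or $\mathbf{010}(x_1)$ on the nose. Your caveat ``exactly or modulo the stated subgroup'' is precisely where the gap sits: for (i) and (ii) there is no stated modulus. The paper proves (i) by an explicit, exact computation: expanding $\big(x_1^{ba}\mathbf{0}(x_1)^{b^a}\big)^3 \big(x_1^{ba}\big)^{-3}\big(\mathbf{0}(x_1)^{b^a}\big)^{-3}$ in coordinates, one finds it equals $\mathbf{0}\big((x_1 b)^3 (x_1^b)^{-3}\big)$, and a further coordinate computation gives $(x_1 b)^3 (x_1^b)^{-3} = \mathbf{0}\big((b^{-1}a^{-1})^3(aba)^3\big) = \mathbf{00}(x_1)$, whence the claim. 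No Hall--Petresco-type estimate enters; the cubes telescope because the specific generators were chosen so that they do. Moreover, (ii) is not re-derived from scratch in the paper; it follows \emph{algebraically} from (i) via the identity $\mathbf{010}(x_1) = \mathbf{0}\big([a,\mathbf{00}(x_1)]\big) = \big(\mathbf{000}(x_1)^b\big)^{-1}\mathbf{000}(x_1)$, a reduction your proposal misses entirely.

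There is also a smaller mismatch in (iii): its right-hand side is a product of cubes of single elements, with no factor of the form $(gh)^3$ present, so the mechanism you describe does not even apply there; (iii) is again a direct coordinate calculation modulo $K_{\mathbf{1}}^{(\times 9)}$. A weaker version of your Lemma \ref{HPLem} idea could perhaps be used to prove congruence variants of (i)--(ii) modulo $K_{\mathbf{10}}^{(\times 27)}$ and $K_{\mathbf{20}}^{(\times 27)}$, which would still suffice for Proposition \ref{cubeexprnlem}, but even then one would need to identify the correct normal subgroups $M,N$ and verify $[M,N]\mho_3(N)$ lands in the target modulus — none of which is spelled out. As it stands, the proposal substitutes a vague absorption argument for the delicate explicit computation that actually makes (i) and (ii) true.
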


\begin{proof}
All these approximations are achieved by direct computation. 
We work through (i) and (ii) in detail and leave the others 
(which are easier) as an exercise to the reader. 

For (i), recall that $b = (a,1,b)$ and $x_1 = (b^{-1}a,a^{-1},b)$, 
so $b^a = (b,a,1)$ and: 
\begin{equation*}
x_1 ^{ba} = (a^{-1}b^{-1}a^{-1},a^{-1},b)^a 
= (b,a^{-1}b^{-1}a^{-1},a^{-1})
\end{equation*}
\begin{equation*}
\mathbf{0}(x_1)^{b^a} = \mathbf{0}(x_1 ^b) 
\end{equation*}
\begin{equation*}
x_1 ^{ba} \mathbf{0}(x_1)^{b^a} = (x_1 b,a^{-1}b^{-1}a^{-1},a^{-1})
\end{equation*}
Thus: 
\begin{equation*}
(x_1 ^{ba})^{-3} = (1,(aba)^3,1)
\end{equation*}
\begin{equation*}
(x_1 ^{ba} \mathbf{0}(x_1)^{b^a})^3 = ((x_1 b)^3,(aba)^{-3},1)
\end{equation*}
so: 
\begin{equation} \label{000comp1}
\big( x_1 ^{ba} \mathbf{0}(x_1)^{b^a} \big)^3 \big( x_1 ^{ba} \big) ^{-3} \big( \mathbf{0}(x_1)^{b^a} \big)^{-3} 
= \mathbf{0}\big( (x_1 b)^3 (x_1 ^b)^{-3} \big)
\end{equation}
and: 
\begin{equation*}
x_1 b = (b^{-1}a^{-1},a^{-1},b^{-1})
\end{equation*}
\begin{equation*}
x_1 ^b = (a^{-1}b^{-1}a^{-1},a^{-1},b)
\end{equation*}
hence: 
\begin{equation} \label{000comp2}
(x_1 b)^3 (x_1 ^b)^{-3} 
= \mathbf{0}\big( (b^{-1}a^{-1})^3 (aba)^3 \big)
\end{equation}
while: 
\begin{equation} \label{000comp3}
(b^{-1}a^{-1})^3 (aba)^3
= b^{-1}(b^{-1})^a b(b^a) = \mathbf{0}(x_1)\text{.}
\end{equation}
Combining (\ref{000comp1}), (\ref{000comp2}) and (\ref{000comp3}), 
we have the required conclusion.  

(ii) now follows from (i), noting that: 
\begin{center}
$\mathbf{010}(x_1) = \mathbf{0}([a,\mathbf{00}(x_1)])
 = \mathbf{0}(\mathbf{00}(x_1)^a)^{-1}  \mathbf{000}(x_1)
 = (\mathbf{000}(x_1)^b)^{-1} \mathbf{000}(x_1)$. 
\end{center}
\end{proof}

\begin{proof}[Proof of Proposition \ref{cubeexprnlem}]
In each of (i)-(vi), we have normal subgroups $M\geq N\geq N_{\ast}$ 
and our claim is that for the relevant $A \in \mathbb{N}$,  
for all $i \geq 0$ and all $z \in N^{(\times 3^i)}$, 
there exists $y_1 , \ldots , y_A \in M^{(\times 3^i)}$ 
such that $z \equiv y_1 ^3 \cdots y_A ^3 \mod N_{\ast} ^{(\times 3^i)}$. 
We first note that it suffices to prove the claim for $i=0$. 
For if we write: 
\begin{center}
$z = (z^{(j)})_{j=1} ^{3^i}$ with $z^{(j)} \in N$, 
\end{center}
we have $y_1 ^{(j)} , \ldots , y_A ^{(j)} \in M$ such that 
$(y_1 ^{(j)})^3 \cdots (y_A ^{(j)})^3 \equiv z^{(j)} \mod N_{\ast}$
(by the claim with $i=0$). 
Then, setting: 
\begin{center}
$y_1 = (y_1 ^{(j)})_{j=1} ^{3^i} , \ldots , 
y_A = (y_A ^{(j)})_{j=1} ^{3^i}$
\end{center}
we have the required conclusion. 
\begin{itemize}
\item[(i)] By Lemma \ref{cubecomplem} (i), 
there exist $u,v,w \in K$ such that:
\begin{center} 
$\mathbf{000}(x_1) = u^3 v^3 w^3$. 
\end{center}
For all $z \in K^{(\times 27)}$, 
there exist $\lambda,\mu,\nu \in \lbrace 0,\pm 1 \rbrace$ 
such that: 
\begin{center}
$z \equiv \mathbf{000}(x_1)^{\lambda} 
\big( \mathbf{000}(x_1)^a \big)^{\mu} 
\big( \mathbf{000}(x_1)^{a^2} \big)^{\nu} 
\mod K^{(\times 27)} _{\mathbf{10}}$. 
\end{center} 
Thus $z^{\prime} = (u^3 v^3 w^3)^{\lambda}
((u^a)^3 (v^a)^3 (w^a)^3)^{\mu}
((u^{a^2})^3 (v^{a^2})^3 (w^{a^2})^3)^{\nu}$ 
is a product of nine cubes in $K$ and 
$z \equiv z^{\prime} \mod K^{(\times 27)} _{\mathbf{10}}$. 

\item[(ii)] For all $z \in K^{(\times 27)} _{\mathbf{10}}$, 
there exist $\lambda,\mu,\nu \in \lbrace 0,\pm 1 \rbrace$ 
such that: 
\begin{center}
$z \equiv \mathbf{010}(x_1)^{\lambda} 
\big( \mathbf{010}(x_1)^a \big)^{\mu} 
\big( \mathbf{010}(x_1)^{a^2} \big)^{\nu} 
\mod K^{(\times 27)} _{\mathbf{20}}$. 
\end{center} 
By Lemma \ref{cubecomplem} (ii), 
$\mathbf{010}(x_1)$ is the product of six cubes in $K$, 
so that (modulo $K^{(\times 27)} _{\mathbf{20}}$) 
$z$ is a product of eighteen cubes in $K$. 

\item[(iii)] Arguing as in the first paragraph of this proof, 
it suffices to show that for all 
$z \in K_{\mathbf{20}} ^{(\times 9)}$
there exist $y_1 , \ldots , y_4 \in K$ such that: 
\begin{center}
$z \equiv y_1 ^3 y_2 ^3 y_3 ^3 y_4 ^3 
\mod K_{\mathbf{1}} ^{(\times 9)}$. 
\end{center}
By Lemma \ref{cubecomplem} (iii), 
there exist $t,u,v,w \in K$ such that: 
\begin{center}
$\mathbf{20}(x_1) \equiv t^3 u^3 v^3 w^3 
\mod K_{\mathbf{1}} ^{(\times 9)}$. 
\end{center}
For all $z \in K_{\mathbf{20}} ^{(\times 9)}$ 
there exists $\lambda \in \lbrace 0,\pm 1 \rbrace$ 
such that: 
\begin{center}
$z \equiv \mathbf{20}(x_1)^{\lambda} 
\mod K_{\mathbf{1}} ^{(\times 9)}$
\end{center}
and the claim follows. 

\item[(iv)] Arguing as in the first paragraph of this proof, 
it suffices to show that for all 
$z \in K_{\mathbf{1}} ^{(\times 9)}$ 
there exist $y_1 , \ldots , y_6 \in K$ such that: 
\begin{center}
$z \equiv y_1 ^3 \cdots y_6 ^3 
\mod K_{\mathbf{2}} ^{(\times 9)}$. 
\end{center}
By Lemma \ref{cubecomplem} (iv) there exist $u,v \in K$ such that 
$\mathbf{01}(x_1) = u^3 v^3$, 
so that $\mathbf{01}(x_1)^a = (u^a)^3 (v^a)^3$ and 
$\mathbf{01}(x_1)^{a^2} = (u^{a^2})^3 (v^{a^2})^3$. 
For all $z \in K_{\mathbf{1}} ^{(\times 9)}$ 
there exist $\lambda,\mu,\nu \in \lbrace 0,\pm 1\rbrace$ 
such that: 
\begin{center}
$z \equiv \mathbf{01}(x_1)^{\lambda}  
\big( \mathbf{01}(x_1)^a \big)^{\mu} 
\big( \mathbf{01}(x_1)^{a^2} \big)^{\nu}
\mod K_{\mathbf{2}} ^{(\times 9)}$
\end{center}
and the claim follows. 

\item[(v)] As in (iii) and (iv) 
it suffices to show that for all 
$z \in K_{\mathbf{2}} ^{(\times 9)}$ there exist 
$y_1 , \ldots , y_6 \in K$ such that: 
\begin{center}
$z \equiv y_1 ^3 \cdots y_6 ^3 \mod L^{(\times 9)}$. 
\end{center}
By Lemma \ref{cubecomplem} (v) there exist $u,v \in K$ such that 
$\mathbf{02}(x_1) \equiv u^3 v^3 \mod L^{(\times 9)}$, so that: 
\begin{center}
$\mathbf{02}(x_1)^a \equiv (u^a)^3 (v^a)^3 , \mathbf{02}(x_1)^{a^2} \equiv (u^{a^2})^3 (v^{a^2})^3 \mod L^{(\times 9)}$. 
\end{center}
For all $z \in K_{\mathbf{2}} ^{(\times 9)}$ 
there exist $\lambda,\mu,\nu \in \lbrace 0,\pm 1\rbrace$ 
such that: 
\begin{center}
$z \equiv \mathbf{02}(x_1)^{\lambda}  
\big( \mathbf{02}(x_1)^a \big)^{\mu} 
\big( \mathbf{02}(x_1)^{a^2} \big)^{\nu}
\mod L^{(\times 9)}$
\end{center}
and the claim follows. 

\item[(vi)] As in (iii)-(v) it suffices to show that for all 
$z \in L^{(\times 3)}$ there exist $y_1,y_2,y_3 \in K$ 
such that: 
\begin{center}
$z \equiv y_1 ^3 y_2 ^3 y_3 ^3 \mod K^{(\times 9)}$. 
\end{center}
By Lemma \ref{cubecomplem} (vi) there exists $u \in K$ such that 
$\mathbf{0}(x_2) \equiv u^3 \mod K^{(\times 9)}$. 
For all $z \in L^{(\times 3)}$ there exist 
$\lambda,\mu,\nu \in \lbrace 0,\pm 1\rbrace$ such that: 
\begin{center}
$z \equiv \mathbf{0}(x_2)^{\lambda} 
\big( \mathbf{0}(x_2)^a \big)^{\mu} 
\big( \mathbf{0}(x_2)^{a^2} \big)^{\nu} 
\equiv (u^{\lambda})^3 
\big( (u^{\lambda})^a \big)^3 
\big( (u^{\lambda})^{a^2} \big)^3 \mod K^{(\times 9)}$
\end{center}
as required. 
\end{itemize}
\end{proof}

\begin{rmrk} \label{FabGupapproxtimermrk}
Note that the proof of Proposition \ref{cubeexprnlem} 
in each of the cases (i)-(vi) explicitly constructs 
the elements $y_j$, the product of whose cubes approximates $z$. 
Indeed, the direct product decomposition 
(and corresponding reduction to the case $i=0$) 
achieved in the first paragraph of the proof 
provides an algorithm which given $z$, 
constructs the $y_j$ in time $O(3^i)$ 
(since, having reduced to $i=0$, 
all computations take place in $K/K^{(\times 81)}$, 
so represent a bounded problem). 
\end{rmrk}

\begin{proof}[Proof of Theorem \ref{FabGupMainThm}]
This will follow from Theorem \ref{PotentSKP}. 
We set $k_n = 3$ and define the sequences 
$(M_n)_n$, $(N_n)_n$, $(A_n)_n$ as follows. 
Write $n = 6q+r$, with $1 \leq r \leq 6$. Then: 
\begin{align*}
M_{6q+1} &= K^{(\times 3^{q})}; 
&N_{6q+1} &= K^{(\times 3^{q+3})}; 
&A_{6q+1} &= 9; \\
M_{6q+2} &= K^{(\times 3^{q})}; 
&N_{6q+2} &= K_{\mathbf{10}} ^{(\times 3^{q+3})}; 
&A_{6q+2} &= 18; \\
M_{6q+3} &= K^{(\times 3^{q+1})}; 
&N_{6q+3} &= K_{\mathbf{20}} ^{(\times 3^{q+3})}; 
&A_{6q+3} &= 4; \\
M_{6q+4} &= K^{(\times 3^{q+1})}; 
&N_{6q+4} &= K_{\mathbf{1}} ^{(\times 3^{q+3})}; 
&A_{6q+4} &= 6; \\
M_{6q+5} &= K^{(\times 3^{q+1})}; 
&N_{6q+5} &= K_{\mathbf{2}} ^{(\times 3^{q+3})}; 
&A_{6q+5} &= 6; \\
M_{6q+6} &= K^{(\times 3^{q+2})}; 
&N_{6q+6} &= L^{(\times 3^{q+3})}; 
&A_{6q+6} &= 3 \\
\end{align*}
Hypothesis (i) of Theorem \ref{PotentSKP} is clear 
and hypothesis (ii) follows immediately 
from Lemma \ref{FabGupcommlem}. 
Hypothesis (iii) follows from the fact that $(N_n)_n$ 
is a descending sequence and that: 
\begin{center}
$N_{6q+1}/N_{6(q+1)+1}
=K^{(\times 3^{q+3})}/K^{(\times 3^{q+4})}
\cong C_3 ^{(\times 3^{q+3})}$
\end{center} 
has exponent $3$. 
Hypothesis (iv) is precisely the content of 
Proposition \ref{cubeexprnlem}. 

Let $\tilde{C} = (3A_1+1)\cdots (3A_6+1)= 72272200$. 
We conclude, by (\ref{FabGupStabindex}) and 
Corollary \ref{KStabcoroll}, that for $m \geq 4$, 
\begin{align*}
\diam^+ (\Gamma/\Stab_{\Gamma}(m)) 
& = \diam^+ (\Gamma/K^{(\times 3^{m-1})}) \\
& = \diam^+ (\Gamma/N_{6(m-4)+1}) \\
& \leq \lvert \Gamma/\Stab_{\Gamma}(4) \rvert 
\tilde{C}^{m-4}\\
& = (3^{26}/\tilde{C}^3)
(27/26\log(3))^{\frac{\log(\tilde{C})}{\log(3)}}
\log \lvert \Gamma:\Stab_{\Gamma}(m) \rvert^{\frac{\log(\tilde{C})}{\log(3)}}\text{.}
\end{align*}
For the directed navigation problem, 
we may take $f(n) = O(3^{n/6})$. 
For, once again writing $n=6q+r$, 
$\Gamma / N_n$ is a quotient of 
$\Gamma / N_{6(q+1)+1} = \Gamma/K^{(\times 3^{q+4})}
= \Gamma/\Stab_{\Gamma}(q+5)$, 
a permutation group of degree $3^{q+5}$, 
so products and inverses in $\Gamma / N_n$ 
may be computed in time $O(3^{q}) = O(3^{n/6})$. 
The approximations required by hypothesis (c) 
of Theorem \ref{PotentSKP} may also be computed in time 
$O(3^{n/6})$, by Remark \ref{FabGupapproxtimermrk}. 
Let $\tilde{C}^{\prime} = (A_1 + 1)\cdots (A_6+1) = 186200$. 
By (\ref{runtimebound}) the algorithm for 
$\Gamma/\Stab_{\Gamma}(m) = \Gamma / N_{6(m-4)+1}$ 
runs in time:
\begin{align*}
O( 3^m (\tilde{C}^{\prime})^m \lvert S \rvert^{1+9^{14}})
& = O(\lvert S \rvert^{1+9^{14}} \log \lvert \Gamma:\Stab_{\Gamma}(m) \rvert^{1+\frac{\log(\tilde{C}^{\prime})}{\log(3)}})\text{.}
\end{align*}

\end{proof}

\section{$p$-adic Analytic Groups} \label{padicsect}

In this Section we prove a directed diameter bound 
for a sequence of quotients of an arbitrary compact $p$-adic 
group, and observe that our bound is an instance 
of the potent SKP. 
We assume that this bound is well-known, 
but we are not aware of an existing reference. 
Before stating the result we require some 
background on $p$-adic analytic groups. 
Our exposition here is based on \cite{DiDuMaSe}. 

\begin{defn}
Let $\Gamma$ be a finitely generated pro-$p$ group. 
$\Gamma$ is \emph{powerful} if $\Gamma / \overline{\mho_{p^e} (\Gamma)}$ 
is abelian, where $e=2$ when $p=2$ and $e=1$ when $p$ is odd. 
$\Gamma$ is \emph{uniform} if it is powerful and torsion-free. 
The \emph{rank} of a uniform group is the minimal 
size of a topological generating set. 
\end{defn}

There are many characterizations of $p$-adic analytic groups. 
For compact groups, perhaps the easiest to visualize is this: 
a compact topological group $\Gamma$ is $p$-adic analytic iff 
it is isomorphic to a closed subgroup of some 
$\SL_n (\hat{\mathbb{Z}}_p)$. 
Equivalently, $\Gamma$ is $p$-adic analytic iff 
it has the structure of a $p$-adic analytic manifold, 
such that the group operations are analytic functions. 
The \emph{dimension} of $\Gamma$ in this case 
is its dimension as a $p$-adic analytic manifold. 

\begin{thm}
Let $\Gamma$ be a compact $p$-adic analytic group. 
Then $\Gamma$ has an open characteristic powerful pro-$p$ 
subgroup $H$. 
\end{thm}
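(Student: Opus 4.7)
The plan is to first produce a characteristic open pro-$p$ subgroup $H_1$ of $\Gamma$, and then shrink it along its lower $p$-series to obtain a characteristic open powerful pro-$p$ subgroup $H$.

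For the first step, I invoke the standard fact that every compact $p$-adic analytic group contains an open pro-$p$ subgroup $H_0$; concretely, realising $\Gamma$ as a closed subgroup of some $\SL_n(\hat{\mathbb{Z}}_p)$ one takes the intersection with a sufficiently deep principal congruence subgroup. Being $p$-adic analytic, $\Gamma$ is topologically finitely generated, and hence has only finitely many open subgroups of any prescribed finite index. Let $H_1$ denote the intersection of all open subgroups of $\Gamma$ of index dividing $[\Gamma:H_0]$. This is a finite intersection, so $H_1$ is open; every continuous automorphism of $\Gamma$ permutes the finite set of open subgroups of a given finite index, whence $\sigma(H_1) = H_1$ for all $\sigma \in \Aut(\Gamma)$, so $H_1$ is characteristic. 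Since $H_1 \leq H_0$, it is pro-$p$.

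For the second step, $H_1$ is itself a compact $p$-adic analytic pro-$p$ group, hence of finite rank. By Lazard's theory as developed in \cite{DiDuMaSe}, the lower $p$-series $\big(P_n(H_1)\big)_n$ is a descending chain of characteristic open subgroups of $H_1$, and for all sufficiently large $n$ the term $P_n(H_1)$ is uniform (and a fortiori powerful). I set $H = P_n(H_1)$ for such an $n$. Since $H_1$ is characteristic in $\Gamma$, any continuous automorphism of $\Gamma$ restricts to a continuous automorphism of $H_1$, which then necessarily preserves the intrinsically defined subgroup $P_n(H_1)$. Hence $H$ is characteristic in $\Gamma$, and by construction open, pro-$p$ and powerful.

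The substantive ingredient is Lazard's theorem that a finitely generated pro-$p$ group of finite rank becomes uniform along its lower $p$-series; this is standard, and fully treated in the reference \cite{DiDuMaSe} that already underlies this section, so I propose to invoke it as a black box rather than reprove it. The remaining steps, namely the existence of an open pro-$p$ subgroup, the formation of its characteristic core via the finiteness of open subgroups of a given index, and the verification that $P_n$ of a characteristic subgroup is characteristic, are all routine given the topological finite generation of $\Gamma$.
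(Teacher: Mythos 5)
The paper states this theorem without proof, presenting it as background material imported from \cite{DiDuMaSe}, so there is no in-paper argument to compare against. Your proof is correct and reproduces the standard argument from that reference: first pass to an open pro-$p$ subgroup via a principal congruence subgroup of $\SL_n(\hat{\mathbb{Z}}_p)$, then take the characteristic core (using that topological finite generation of $\Gamma$ forces there to be only finitely many open subgroups of any given index), and finally descend the lower $p$-series of the resulting characteristic open pro-$p$ subgroup until it becomes uniform, hence powerful. Each step is sound: the characteristic core is open because it is a finite intersection of open subgroups, it is pro-$p$ because it sits inside the pro-$p$ subgroup you started from, and the lower $p$-series terms are characteristic-in-characteristic, hence characteristic in $\Gamma$. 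Invoking Lazard's theorem (that the lower $p$-series of a finitely generated pro-$p$ group of finite rank is eventually uniform) as a black box is appropriate, since it is exactly what the paper itself quotes in the theorem stated immediately afterward. In short, your argument is a faithful expansion of what the paper takes for granted.
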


Let $H$ be a finitely generated powerful pro-$p$ group. 
Let $(H_i)_i$ be the lower central $p$-series of $H$. 

\begin{lem} \label{padicshrinklem}
For all $i,j$, 
\begin{itemize}
\item[(i)] $[H_i,H_j] \leq H_{i+j}$; 
\item[(ii)] $\mho_p (H_i) \leq H_{i+1}$. 
\end{itemize}
\end{lem}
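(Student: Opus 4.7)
The plan is to deduce both items from the standard structure theory of powerful pro-$p$ groups as developed in Chapter 3 of Dixon-du Sautoy-Mann-Segal \cite{DiDuMaSe}. The central structural input is that in a finitely generated powerful pro-$p$ group $H$, every term $H_i$ of the lower central $p$-series is itself powerful, and the $p$-th power map induces a surjective homomorphism $H_i/H_{i+1} \twoheadrightarrow H_{i+1}/H_{i+2}$. Equivalently, $H_{i+1} = \overline{\mho_p(H_i)}$, and this identification gives statement (ii) at once.

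For statement (i), I would proceed by induction on $j$, with $i$ arbitrary. The base case $j = 1$ is the inclusion $[H_i, H] \leq \overline{H_i^p[H_i, H]} = H_{i+1}$, which holds by the very definition of the lower central $p$-series. For the inductive step, the structural result above shows that $H_j$ is topologically generated by $p$-th powers $y^p$ with $y \in H_{j-1}$, together with simple commutators $[u,v]$ with $u \in H_{j-1}$ and $v \in H$. Commutators of $x \in H_i$ with a generator of the first type are governed by the approximate identity
\[
[x, y^p] \equiv [x,y]^p
\]
modulo deeper terms of the $p$-series, which, combined with the inductive hypothesis $[H_i, H_{j-1}] \leq H_{i+j-1}$ and statement (ii), gives $[x, y^p] \in H_{i+j}$. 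Commutators with generators of the second type are handled by the Hall-Witt (three-subgroup) identity, reducing $[H_i, [H_{j-1}, H]]$ to the factors $[[H_{j-1}, H_i], H]$ and $[[H, H_i], H_{j-1}]$; both lie in $H_{i+j}$ by the inductive hypothesis together with the $j=1$ case.

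The main obstacle, and the reason DDMS devote substantial space to these results, is giving a rigorous meaning to the approximate identity $[x, y^p] \equiv [x,y]^p$: in an arbitrary group this relation comes with error terms from the Hall-Petresco expansion, and controlling them requires precisely the interplay between commutators and $p$-th powers that is encoded in the powerful hypothesis. Rather than redevelop this machinery I would simply cite the relevant statements from \cite{DiDuMaSe}.
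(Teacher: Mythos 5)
The paper does not prove this lemma: it is stated without proof as background material, with the section introduced by the remark that ``our exposition here is based on \cite{DiDuMaSe}.'' So the paper's ``proof'' is just a citation to Dixon--du~Sautoy--Mann--Segal, and your proposal---which outlines the DDMS argument and then explicitly defers to that reference for the rigorous details---is taking essentially the same approach, simply with more exposition of what lies behind the citation.

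For what it is worth, your sketch is sound. One small streamlining you could make: since $H$ is assumed powerful, DDMS show that $H_j$ equals the closure of $H_{j-1}^p$ alone, so in the inductive step for (i) it actually suffices to handle the $p$-th-power generators $y^p$ with $y\in H_{j-1}$, and the second family of generators (commutators $[u,v]$ with $u\in H_{j-1}$, $v\in H$) together with the Hall--Witt argument is redundant under the powerful hypothesis. Also, the three-subgroup step as you phrase it reduces $[H_i,[H_{j-1},H]]$ to $[[H,H_i],H_{j-1}]$ and $[[H_i,H_{j-1}],H]$; the second of these is handled by the $j=1$ case as you say, but the first needs $[H_{i+1},H_{j-1}]\le H_{i+j}$, which is the claim with parameters $(i+1,j-1)$ rather than $(i,j-1)$, so you should set the induction up on the sum $i+j$ (or make clear it is a double induction) rather than on $j$ alone with $i$ ``arbitrary.'' Neither point affects the conclusion; the citation to DDMS Chapter~3 carries the full weight in any case.
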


\begin{thm}
$H_i$ is uniform for all sufficiently large $i$. 
In particular, every compact $p$-adic analytic group 
has an open characteristic uniform pro-$p$ subgroup. 
\end{thm}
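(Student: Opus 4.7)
The plan is to reduce the second statement to the first, then prove the first from the standard structure theory of powerful pro-$p$ groups. For the reduction, note that the lower central $p$-series is functorially determined by $H$, so each $H_i$ is characteristic in $H$; since $H$ is characteristic in $\Gamma$ by the preceding theorem, each $H_i$ is characteristic in $\Gamma$. Each $H_i$ also has finite index in the finitely generated pro-$p$ group $H$ (the successive quotients $H_j/H_{j+1}$ are finite elementary abelian $p$-groups), and is therefore open in $H$ and hence in $\Gamma$. So the moment some $H_i$ is uniform, it supplies the required open characteristic uniform pro-$p$ subgroup of $\Gamma$.

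To show $H_i$ is uniform for all sufficiently large $i$, I would verify both that $H_i$ is powerful and that $H_i$ is torsion-free. For powerfulness, I would invoke the standard result from \cite{DiDuMaSe} that every term of the lower central $p$-series of a powerful pro-$p$ group is itself powerful. The underlying mechanism is that in a powerful $H$ the inclusion in Lemma \ref{padicshrinklem} (ii) is tight enough that $H_{i+e}$ coincides with $\overline{\mho_{p^e}(H_i)}$, so Lemma \ref{padicshrinklem} (i) gives $[H_i,H_i] \leq H_{2i} \leq \overline{\mho_{p^e}(H_i)}$ as soon as $i \geq e$, which is precisely the powerfulness of $H_i$.

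For torsion-freeness, the key input (again from \cite{DiDuMaSe}, and closely tied to the finite rank of a finitely generated powerful pro-$p$ group) is that the torsion subgroup $T$ of $H$ is finite. Since the $H_i$ form a neighbourhood basis of the identity, $\bigcap_i H_i = \{1\}$, and as $T$ is finite there exists $i_0$ with $T \cap H_{i_0} = \{1\}$. Any element of finite order in $H_{i_0}$ lies in $T \cap H_{i_0}$ and is therefore trivial, so $H_{i_0}$ is torsion-free; combined with powerfulness this makes $H_{i_0}$ uniform. The main task in executing this plan is not any single delicate calculation but the careful marshalling of the three structural inputs from \cite{DiDuMaSe} (persistence of powerfulness along the lower central $p$-series, finiteness of $T(H)$, and the intersection property of the $H_i$); once these are granted, the argument is essentially a finiteness-and-functoriality bookkeeping.
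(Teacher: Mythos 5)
The paper gives no proof for this theorem; it is imported from \cite{DiDuMaSe} as background, so there is no internal argument to compare against, only the standard one from that reference. Your reduction (each $H_i$ is characteristic in $H$ by functoriality of the lower $p$-series, hence characteristic in $\Gamma$, and open since the quotients $H_j/H_{j+1}$ are finite) is correct, and so is your argument that $H_i$ is powerful once $i \geq e$: powerfulness of $H$ gives $H_{j+1} = \overline{\mho_p(H_j)}$ for every $j$, whence $H_{i+e}=\overline{\mho_{p^e}(H_i)}$, and combining this with $[H_i,H_i]\leq H_{2i}\leq H_{i+e}$ is exactly the powerfulness of $H_i$.

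The torsion-freeness step, however, has a circularity problem. You invoke the finiteness of the torsion set $T$ of the finitely generated powerful pro-$p$ group $H$, but in \cite{DiDuMaSe} that finiteness is itself \emph{deduced} from the existence of an open uniform (hence torsion-free) subgroup $N$: one observes $T\cap N=\{1\}$ and so $T$ injects into the finite coset space $H/N$. There is no elementary way to see that $T$ is finite before one already knows some open subgroup is torsion-free, so your argument assumes a consequence of the very statement it is proving. The standard, self-contained route sidesteps $T$ entirely: the $p$-th power map induces an epimorphism $H_i/H_{i+1}\twoheadrightarrow H_{i+1}/H_{i+2}$ (Lemma \ref{padicepilem}), so the ranks $d(H_i)$ form a non-increasing sequence of non-negative integers and therefore stabilise at some index $i_0$; once the ranks agree, these epimorphisms between equidimensional $\mathbb{F}_p$-spaces are isomorphisms, and then any nontrivial $g\in H_{i_0}$ lying in $H_j\setminus H_{j+1}$ satisfies $g^{p^k}\in H_{j+k}\setminus H_{j+k+1}$ for all $k$, so $g$ has infinite order. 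You should replace the torsion-subgroup appeal with this rank-stabilisation argument (or explicitly acknowledge that you are treating the finiteness of $T$ as a black box from a development in which it is established independently, which is not the order \cite{DiDuMaSe} uses).
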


\begin{lem} \label{padicfollowlem}
For all $i,j$, $(H_{i+1})_{j+1} = H_{i+j+1}$. 
\end{lem}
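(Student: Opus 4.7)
The plan is to proceed by induction on $j$, using the defining recursion of the lower central $p$-series, $H_{n+1} = \overline{H_n^p[H_n,H]}$, together with two inputs already to hand: Lemma \ref{padicshrinklem}, which in particular gives $[H_a,H_b] \leq H_{a+b}$, and the standard fact that each term of the lower central $p$-series of a powerful pro-$p$ group $H$ is powerfully embedded in $H$. The latter will let me sharpen the recursion to $H_{n+1} = \overline{H_n^p}$, since powerful embedding means $[H_n,H] \leq \overline{H_n^p}$.

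For the base case $j=0$, we have $(H_{i+1})_1 = H_{i+1} = H_{i+0+1}$ by definition of the first term of a lower central $p$-series. For the inductive step, suppose $(H_{i+1})_{j+1} = H_{i+j+1}$. Applying the defining recursion inside the powerful pro-$p$ group $H_{i+1}$ gives
\[
(H_{i+1})_{j+2} = \overline{(H_{i+1})_{j+1}^{\,p}\,[(H_{i+1})_{j+1}, H_{i+1}]} = \overline{H_{i+j+1}^{\,p}\,[H_{i+j+1}, H_{i+1}]}.
\]
The inclusion of the right-hand side in $H_{i+j+2}$ follows from $H_{i+j+1}^{\,p} \leq H_{i+j+2}$ (directly from the defining recursion of $(H_n)_n$) and $[H_{i+j+1}, H_{i+1}] \leq H_{2i+j+2} \leq H_{i+j+2}$ (by Lemma \ref{padicshrinklem}(i) and the fact that $(H_n)_n$ is descending). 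The reverse inclusion is where the powerful hypothesis is used: it yields $H_{i+j+2} = \overline{H_{i+j+1}^{\,p}}$, which is trivially contained in the right-hand side of the displayed equation above.

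The one step that is not a mechanical consequence of the definitions is the assertion $H_{n+1} = \overline{H_n^{\,p}}$, which relies on the non-trivial powerful-pro-$p$ statement that $H_n$ is powerfully embedded in $H$ (so that $[H_n,H] \leq \overline{H_n^{\,p}}$); this is a standard fact available, for instance, in \cite{DiDuMaSe}. Once it is in hand, the induction essentially writes itself, and the lemma can be viewed as expressing a self-similarity of the lower central $p$-series of $H$ under restriction to its own tail, which is exactly what is needed to promote properties of $H$ to properties of the uniform subgroup $H_i$ in the sequel.
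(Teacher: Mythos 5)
The paper states Lemma~\ref{padicfollowlem} without proof; it is one of several background facts from \cite{DiDuMaSe} that the exposition collects in Section~\ref{padicsect}. Your proposal supplies essentially the standard argument one would extract from that reference, and it is correct. The two inputs you isolate are exactly right: (a) Lemma~\ref{padicshrinklem}(i) controls the commutator term $[H_{i+j+1},H_{i+1}]$ in the forward inclusion (noting that for $i\geq 1$ one gets $H_{2i+j+2}\leq H_{i+j+2}$, and the $i=0$ case is vacuous), and (b) the powerfully-embedded property of the terms $H_n$ gives the sharpened recursion $H_{n+1}=\overline{H_n^p}$, which is what makes the reverse inclusion go through. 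One small point worth spelling out: for $p=2$ powerful embedding gives $[H_n,H]\leq\overline{H_n^4}$, which is a priori stronger than the $[H_n,H]\leq\overline{H_n^p}$ you quote; since $\overline{H_n^4}\leq\overline{H_n^2}$, your claim holds in all characteristics, but it is cleaner to cite the $p^e$-version directly given the paper's own definition of powerful. The closing observation — that the lemma expresses a self-similarity of the lower central $p$-series under passing to a tail, and that this is what lets one transfer structure from $H$ to the uniform subgroup $H_i$ — accurately reflects why the result is needed in the proof of Theorem~\ref{padicdiamthm}.
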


\begin{lem} \label{padicepilem}
For all $i,j$, the map $x \mapsto x^{p^j}$ 
induces an epimorphism 
$H_i/H_{i+1} \rightarrow H_{i+j}/H_{i+j+1}$. 
\end{lem}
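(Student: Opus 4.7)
The plan is to reduce to the base case $j=1$ by induction on $j$. Since $x^{p^j} = (x^{p^{j-1}})^p$, the $p^j$-th power map factors as the composition
\[
H_i/H_{i+1} \xrightarrow{x \mapsto x^{p^{j-1}}} H_{i+j-1}/H_{i+j} \xrightarrow{y \mapsto y^p} H_{i+j}/H_{i+j+1}\text{,}
\]
so if each constituent is a well-defined epimorphism --- the first by inductive hypothesis, the second by the base case applied with $m = i+j-1$ --- then the composition is one also.

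It remains to show that for every $m \geq 1$ the map $x \mapsto x^p$ induces a surjection $H_m/H_{m+1} \to H_{m+1}/H_{m+2}$. The image lies in $H_{m+1}$ by Lemma \ref{padicshrinklem}(ii). For well-definedness modulo $H_{m+2}$, I would apply Lemma \ref{HPLem} to $x \in H_m$ and $h \in H_{m+1}$:
\[
(xh)^p x^{-p} \in [H_m, H_{m+1}] \mho_p(H_{m+1}) \leq H_{2m+1} \cdot H_{m+2} \leq H_{m+2}\text{,}
\]
where the second inclusion uses Lemma \ref{padicshrinklem} together with $m \geq 1$. For the homomorphism property, I would invoke the Hall--Petresco formula $(xy)^p = x^p y^p \prod_{i=2}^{p} c_i^{\binom{p}{i}}$ with $c_i$ in the $i$-th term of the lower central series of $\langle x,y \rangle$, hence in the $i$-th term of the lower central $p$-series of $H_m$, which by Lemma \ref{padicfollowlem} equals $H_{m+i-1}$. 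Since $p \mid \binom{p}{i}$ for $2 \leq i \leq p-1$, the factor $c_i^{\binom{p}{i}}$ lies in $\mho_p(H_{m+i-1}) \leq H_{m+i} \leq H_{m+2}$, while $c_p \in H_{m+p-1} \leq H_{m+2}$ directly.

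Surjectivity is the main obstacle, and where powerfulness enters essentially. From the recursive definition $H_{m+1} = [H, H_m] \overline{\mho_p(H_m)}$, the task reduces to the inclusion $[H, H_m] \subseteq \overline{\mho_p(H_m)} H_{m+2}$, which is one of the foundational consequences of powerfulness developed in \cite{DiDuMaSe}, Chapter 3. Granted this, every coset of $H_{m+2}$ in $H_{m+1}$ is represented by a $p$-th power drawn from $H_m$, completing the argument.

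A minor subtlety arises when $p = 2$, since powerfulness is then defined via $\mho_4$ rather than $\mho_2$. The Hall--Petresco analysis then needs to track an additional term, but the overall structure of the argument is unchanged.
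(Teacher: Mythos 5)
The paper gives no proof of this lemma; it is stated among several background results drawn from \cite{DiDuMaSe}, so there is no internal argument to compare against. Your reconstruction follows the standard line of that source and is substantially correct, but the $p=2$ branch of the homomorphism step is a genuine gap, and your diagnosis of it is off. There is no ``additional term'' to track in the Hall--Petrescu formula when $p=2$: the identity is simply $x^2 y^2 = (xy)^2 c_2$ with $c_2 \in \gamma_2(\langle x,y\rangle)$, and the trouble is that your bound $c_2 \in H_{m+1}$ does not reach $H_{m+2}$. For $m \geq 2$ the fix is already available in the paper's toolkit: $c_2 \in [H_m,H_m] \leq H_{2m} \leq H_{m+2}$ by Lemma \ref{padicshrinklem}(i). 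For $m=1$ one must invoke powerfulness directly: $[H,H] \leq \overline{\mho_4(H)} \leq \overline{\mho_2(\mho_2(H))} \leq \overline{\mho_2(H_2)} \leq H_3$, using Lemma \ref{padicshrinklem}(ii) twice. (Equivalently, citing the fact that each $H_m$ is powerfully embedded in $H$ handles every $m$ and both parities of $p$ at once.)

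On the surjectivity step, be aware that the inclusion $[H,H_m] \subseteq \overline{\mho_p(H_m)}H_{m+2}$ you import from \cite{DiDuMaSe} is where essentially all of the content lives: it is an immediate consequence of the structure theorem $H_{m+1} = \overline{\mho_p(H_m)}$ for powerful pro-$p$ groups, which more or less contains the lemma being proved. Outsourcing it is entirely consistent with the paper's own treatment, but one should recognize that the well-definedness and homomorphism parts of your argument are the routine portion, and the citation is carrying the real weight.
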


\begin{thm}
Let $\Gamma$ be a compact $p$-adic analytic group of dimension 
$d$. Let $K$ be an open uniform subgroup of $\Gamma$. 
Then $K$ has rank $d$. 
\end{thm}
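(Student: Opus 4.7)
The plan is to combine two ingredients: (a) open subgroups of a compact $p$-adic analytic group inherit its dimension, and (b) the rank of a uniform pro-$p$ group equals its dimension as a $p$-adic analytic manifold. Granted both, the conclusion is immediate: $\mathrm{rank}(K) = \dim(K) = \dim(\Gamma) = d$.

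For (a), I would appeal to the fact that a $p$-adic analytic structure on $\Gamma$ restricts to one on any open subgroup, since $K$ is an open submanifold of $\Gamma$; hence $\dim(K) = \dim(\Gamma) = d$ (as $p$-adic analytic manifolds). This is just the observation that open subsets of a $d$-dimensional $p$-adic manifold are again $d$-dimensional.

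For (b), the idea is to exhibit an explicit global chart for $K$. Let $r = \mathrm{rank}(K)$ and fix a minimal topological generating set $\{a_1,\ldots,a_r\}$ of $K$. The central structural fact about uniform groups is that the map
\[
\phi: \mathbb{Z}_p^r \longrightarrow K, \qquad (\lambda_1,\ldots,\lambda_r) \longmapsto a_1^{\lambda_1} \cdots a_r^{\lambda_r},
\]
is a homeomorphism, and indeed a $p$-adic analytic diffeomorphism onto $K$. Bijectivity of $\phi$ is the key content; the analytic structure on $K$ pulls back to the standard one on $\mathbb{Z}_p^r$, so $\dim(K) = r$. Setting up $\phi$ and checking bijectivity rests on the divisibility and torsion-freeness properties of uniform groups. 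Concretely, iterating Lemma \ref{padicepilem} and using torsion-freeness shows that $x \mapsto x^{p^j}$ induces an \emph{isomorphism} $H_i/H_{i+1} \to H_{i+j}/H_{i+j+1}$ for uniform $H$; combining this with Lemma \ref{padicfollowlem} yields that every successive quotient $K_i/K_{i+1}$ is an elementary abelian $p$-group of the same rank $r$. From here one builds $\phi$ level by level along the lower central $p$-series and verifies bijectivity.

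The main obstacle is the verification that $\phi$ is well-defined and bijective; this is the heart of the ``coordinates of the first kind'' construction for uniform groups and is a nontrivial part of the DDMS machinery, resting in turn on the fact that in a uniform group the powering maps are bijective and interact well with the filtration $(K_i)_i$. An alternative, more measure-theoretic route avoids the explicit chart: one can instead extract $d$ from the asymptotic equality $\log_p \lvert \Gamma : U \rvert \sim d \cdot n$ for suitable open subgroups $U$ shrinking to the identity, and compare with the count $\lvert K : K_{n+1} \rvert = p^{rn}$ that follows from Lemma \ref{padicepilem} together with torsion-freeness. Either way the conclusion is $r = d$.
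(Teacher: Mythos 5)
The paper gives no proof of this statement; it is one of several facts from \cite{DiDuMaSe} recalled without proof in Section \ref{padicsect}, so there is no ``paper's own proof'' to compare against. Your argument is correct and is essentially the standard DDMS treatment: coordinates of the first kind furnish a global chart $\mathbb{Z}_p^r \to K$, so $K$ has analytic dimension $r = \operatorname{rank}(K)$, and since $K$ is open in $\Gamma$ it inherits the ambient manifold dimension $d$, giving $r = d$. One small caution on the detour you take: the claim that iterating Lemma \ref{padicepilem} ``and using torsion-freeness'' upgrades the epimorphism $H_i/H_{i+1} \to H_{i+j}/H_{i+j+1}$ to an isomorphism is glossing over the genuine work in DDMS (the injectivity of $x \mapsto x^p$ on successive quotients for uniform groups is proved by a rank-counting argument, not read off directly from torsion-freeness); in any case this is exactly the content of Lemma \ref{padiclevellem}, which the paper already supplies, so you need not re-derive it.
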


\begin{lem} \label{padiclevellem}
Let $K$ be a uniform pro-$p$ group of rank $d$. 
Let $(K_i)_i$ be the lower central $p$-series of $K$. 
Then $K_i / K_{i+1} \cong C_p ^d$. 
\end{lem}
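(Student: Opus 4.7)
The plan is to establish the isomorphism by a sandwich argument: I would show that each $K_i/K_{i+1}$ is an $\mathbb{F}_p$-vector space of dimension exactly $d$.

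First I would verify that $K_i/K_{i+1}$ is elementary abelian for every $i$. By Lemma~\ref{padicshrinklem}(i), $[K_i,K_i]\leq K_{2i}\leq K_{i+1}$, so the quotient is abelian; by Lemma~\ref{padicshrinklem}(ii), $\mho_p(K_i)\leq K_{i+1}$, so it has exponent dividing $p$. Hence $K_i/K_{i+1}\cong C_p^{d_i}$ for some $d_i\geq 0$, and it remains to show $d_i=d$.

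For the upper bound, the case $i=1$ is standard: $K_2$ coincides with the Frattini subgroup of $K$ (since $K$ is powerful), so $\dim_{\mathbb{F}_p}(K/K_2)$ equals the rank $d$ of $K$. Applying Lemma~\ref{padicepilem} with $j=i-1$ furnishes, for every $i\geq 1$, an epimorphism $K/K_2\twoheadrightarrow K_i/K_{i+1}$ via $x\mapsto x^{p^{i-1}}$, whence $d_i\leq d$. A useful byproduct is that each $K_i$ now has finite index in $K$ (at most $p^{d(i-1)}$), so is open.

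For the lower bound, I would invoke the theorem that $K_N$ is uniform for all sufficiently large $N$. Being open in the compact $p$-adic analytic group $K$, such a $K_N$ shares the dimension of $K$, which by the rank--dimension correspondence equals $d$; so $K_N$ is uniform of rank $d$. By Lemma~\ref{padicfollowlem} its lower central $p$-series is the shift $(K_N)_j=K_{N+j-1}$, so $(K_N)_2=K_{N+1}$; applying the $i=1$ case already established to the uniform group $K_N$ gives $K_N/K_{N+1}\cong C_p^d$, i.e.\ $d_N=d$. To propagate this downward, I would iterate Lemma~\ref{padicepilem} with $j=1$ to produce a chain of epimorphisms
\begin{equation*}
K_i/K_{i+1}\twoheadrightarrow K_{i+1}/K_{i+2}\twoheadrightarrow\cdots\twoheadrightarrow K_N/K_{N+1}\text{,}
\end{equation*}
so $d_i\geq d_N=d$ for all $i\leq N$. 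For $i>N$, the analogous statement follows by running the whole argument inside the uniform group $K_N$, whose lower central $p$-series is again a shift of that of $K$. Combining with the upper bound gives $d_i=d$, and since $K_i/K_{i+1}$ is a quotient of $C_p^d$ of the same order, it must equal $C_p^d$.

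The one point requiring care is the appeal to the rank--dimension theorem for $K_N$: this needs $K_N$ to be open in a compact $p$-adic analytic group of known dimension, which is precisely what the upper-bound step delivers. Once this is in place, everything reduces to iterating the two epimorphisms supplied by Lemma~\ref{padicepilem}.
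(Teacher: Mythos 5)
The paper states Lemma~\ref{padiclevellem} without proof; the whole block of results in Section~\ref{padicsect} is imported from \cite{DiDuMaSe}, and the standard proof there goes via a stronger fact than anything quoted here, namely that for a uniform group $K$ the $p^{j}$-power map is a \emph{bijection} $K\to K_{j+1}$, so that the induced map $K/K_2\to K_{j+1}/K_{j+2}$ is an isomorphism rather than merely an epimorphism.

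Your argument is correct, and it is a genuinely different (and necessarily more roundabout) route, since Lemma~\ref{padicepilem} as stated only gives surjectivity. You get the upper bound $d_i\le d$ directly from that epimorphism plus the identification $K_2=\Phi(K)$ for powerful $K$, and for the lower bound you pass to some high term $K_N$, use the unnamed theorem that $K_N$ is eventually uniform, the fact that an open subgroup of a compact $p$-adic analytic group has the same dimension, the rank--dimension theorem to pin $\mathrm{rank}(K_N)=d$, and Lemma~\ref{padicfollowlem} to identify $(K_N)_2=K_{N+1}$; then the chain of epimorphisms from Lemma~\ref{padicepilem} pushes $d_i\ge d_N=d$ back down to every $i\le N$. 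This is a valid sandwich, and since $N$ may be chosen $\ge i$ from the outset, the final paragraph about $i>N$ is superfluous but harmless. The one implicit input you should flag is that a uniform pro-$p$ group is itself compact $p$-adic analytic (Lazard), which you need in order to speak of $\dim K$ and then invoke the rank--dimension theorem with $\Gamma=K$; this is standard and consistent with the paper's framework but is not among the statements it lists. What the DDMS bijectivity proof buys is a one-line argument with no detour through dimension theory; what your proof buys is that it stays within the epimorphism-only toolkit actually written down in the paper.
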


We are now ready to state and prove our diameter bound. 

\begin{thm} \label{padicdiamthm}
Let $\Gamma$ be a compact $p$-adic analytic group of dimension 
$d$. Let $H$ be an open characteristic powerful pro-$p$ 
subgroup. Let $(H_i)_i$ be the lower central $p$-series of $H$. 
Then for all $n$, 
\begin{center}
$\diam^+ (\Gamma/H_n) \leq \lvert \Gamma:H_2 \rvert (p^{n-1}-1)/(p-1)  = O_{\Gamma} \big( \lvert \Gamma:H_n \rvert^{1/d} \big)$. 
\end{center}
\end{thm}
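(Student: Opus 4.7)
The plan is to apply Theorem \ref{PotentSKP}, together with the refinement described in Remark \ref{improvementsremark}(ii), directly to the lower central $p$-series of $H$. Concretely, I set $N_i = H_{i+1}$, $M_i = H_i$, $k_i = p$ and $A_i = 1$ for all $i \geq 1$.

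First I verify the four hypotheses of Theorem \ref{PotentSKP}. Hypothesis (i), that $N_i \leq M_i$, is immediate. Hypothesis (ii) follows from Lemma \ref{padicshrinklem}(i): we have $[M_i,N_i] = [H_i,H_{i+1}] \leq H_{2i+1} \leq H_{i+2} = N_{i+1}$. Hypothesis (iii) follows from Lemma \ref{padicshrinklem}(ii): $\mho_p(N_i) = \mho_p(H_{i+1}) \leq H_{i+2} = N_{i+1}$. For the crucial hypothesis (iv), Lemma \ref{padicepilem} furnishes an epimorphism $H_i/H_{i+1} \twoheadrightarrow H_{i+1}/H_{i+2}$ given by $x \mapsto x^p$, so for any $z \in N_i$ there exists $y \in M_i$ with $y^p z^{-1} \in N_{i+1}$. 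In particular, $A_i = 1$ suffices.

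Next, I exploit the refinement of Remark \ref{improvementsremark}(ii). Since $M_{n+1} = H_{n+1} = N_n$, the hypothesis $M_{n+n_0} \leq N_n$ holds with $n_0 = 1$. The recursion of the remark simplifies to $L_0 = \lvert\Gamma:H_2\rvert$ and $L_i = pL_{i-1}$, whence $L_i = p^i \lvert\Gamma:H_2\rvert$ and
\[
\diam^+(\Gamma/H_{n+1}) \leq \sum_{i=0}^{n-1} L_i = \lvert\Gamma:H_2\rvert \cdot \frac{p^n - 1}{p-1}.
\]
Re-indexing $n \mapsto n-1$ gives the stated explicit bound. The asymptotic estimate $O_{\Gamma}(\lvert\Gamma:H_n\rvert^{1/d})$ then follows from the fact that some $H_{i_0}$ is uniform of rank $d$, so by Lemma \ref{padiclevellem} we have $\lvert H_i:H_{i+1}\rvert = p^d$ for all $i \geq i_0$, giving $\lvert\Gamma:H_n\rvert \geq C_{\Gamma}\, p^{d(n-i_0)}$ for some constant $C_{\Gamma} > 0$.

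The main subtlety in this plan lies in hypothesis (iv): it is the abelianness of $H_i/H_{i+1}$ (itself a consequence of $H$ being powerful) that makes $x \mapsto x^p$ descend to a genuine homomorphism on these quotients, so that a \emph{single} $p$-th power --- rather than a product of $d$ of them, as one might naively expect given $H_i/H_{i+1} \cong C_p^d$ --- is enough to approximate an arbitrary element at each level. Without this strengthening, a direct application of Theorem \ref{PotentSKP} with $A_i = d$ would produce the substantially weaker bound $(1+dp)^{n-1}$ in place of $(p^n-1)/(p-1)$, which would fall short of the claimed $O_{\Gamma}(\lvert\Gamma:H_n\rvert^{1/d})$ estimate.
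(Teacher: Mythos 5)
Your argument is correct and coincides with the paper's own derivation of this bound as an instance of the potent Solovay--Kitaev procedure: the same choices $M_i = H_i$, $N_i = H_{i+1}$, $A_i = 1$, $n_0 = 1$ are made, the hypotheses are verified via Lemmas \ref{padicshrinklem} and \ref{padicepilem}, and the explicit geometric sum comes from Remark \ref{improvementsremark}(ii), exactly as in the paper. One minor remark: you correctly take $k_i = p$, whereas the printed text of the paper reads $k_i = 1$; this must be a typo, since with $k_i = 1$ hypothesis (iii) of Theorem \ref{PotentSKP} would demand the false inclusion $H_{i+1} \leq H_{i+2}$.
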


\begin{proof}
Let $S \subseteq \Gamma/H_n$ be a generating set. 
Then $H_1/H_2 \subseteq B^+ _S (\lvert \Gamma:H_2 \rvert)H_2/H_2$, 
so by Lemma \ref{padicepilem}, 
$H_i/H_{i+1} \subseteq B^+ _S (p^{i-1}\lvert \Gamma:H_2 \rvert)$ 
for all $i \leq n-1$, 
\begin{center}
$\diam^+ (\Gamma/H_n,S) \leq \lvert \Gamma:H_2 \rvert (1+p+\cdots +p^{n-2})$. 
\end{center}

We may also interpret this bound 
as an instance of the of the potent SKP. 
We apply Theorem \ref{PotentSKP} with 
$M_i = H_i$, $N_i = H_{i+1}$, $A_i = 1$ and $k_i = 1$. 
Hypothesis (i) of Theorem \ref{PotentSKP} is clear; 
hypotheses (ii) and (iii) follow from Lemma \ref{padicshrinklem}, 
and hypothesis (iv) follows from Lemma \ref{padicepilem}. 
Moreover $N_i = M_{i+1}$ so the improvement described 
in Remark \ref{improvementsremark} (ii) is available to us, 
with $n_0 = 1$, and the required 
bound follows from (\ref{improvermrkeqn}).

For the second equality, it suffices to note that 
$\lvert H_i : H_{i+1} \rvert \geq p^d$ for all $i$. 
This may be seen by combining Lemmas \ref{padicfollowlem}, 
\ref{padicepilem} and \ref{padiclevellem}. 
\end{proof}

\begin{rmrk}
The conclusion of Theorem \ref{padicdiamthm} is best possible 
in general: this is witnessed by the example 
$\Gamma = F \times \hat{\mathbb{Z}}_p ^d$, 
where $F$ is a finite group (which may be chosen to be of 
arbitrarily large diameter). 
Under the assumption that $\Gamma$ is \emph{Fab} 
(that is: every open subgroup has finite abelianisation) 
much stronger, indeed polylogarithmic, diameter bounds for $\Gamma/H_n$ are provided by \cite{Brad}. 
These may then be extended to the directed diameter by 
Theorem \ref{BabaiDirectedThm}. 
Nevertheless, the degree of the polylogarithmic 
upper bound for $\diam(\Gamma/H_n)$ from \cite{Brad} 
in general grows like $\log(d)$ in the dimension $d$ 
of $\Gamma$, so the conclusion of 
Theorem \ref{padicdiamthm} does improve 
upon the results of \cite{Brad} for certain 
groups $\Gamma/H_n$ when $d$ is large compared with $n$ and $p$ 
(say $\log(d) \gg \log(p) n/\log(n)$). 
\end{rmrk}

\section{Spectral Gap and Mixing Time} \label{mixsect}

Let $G$ be a finite group and $S \subseteq G$. 
Let $A_S$ be the (symmetric, normalized) adjacency operator 
on the Cayley graph $\Cay(G,S)$. 
$A_S$ is a self-adjoint operator of norm one; 
let its spectrum be: 
\begin{center}
$1 = \lambda_1 \geq \lambda_2 \geq \ldots 
\geq \lambda_{\lvert G \rvert} \geq -1$. 
\end{center}
The eigenvalue $\lambda_1$ corresponds to the 
constant functions on $G$; 
it is a simple eigenvalue iff $S$ generates $G$. 
In this case, the quantity $1-\lambda_2$ is the 
\emph{spectral gap} of the pair $(G,S)$. 

In many applications it is desirable for a Cayley graph 
to have large spectral gap. 
In particular, a family of bounded-valence Cayley graphs 
whose spectral gaps are uniformly bounded away from zero 
form an \emph{expander family}. 
There is also a close relationship between spectral gap and diameter. 

\begin{propn}[\cite{DiacSalo} Corollary 3.1]
The spectral gap of $(G,S)$ is at least 
$(2 \lvert S \rvert \diam(G,S)^2)^{-1}$. 
\end{propn}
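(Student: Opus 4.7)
The plan is to prove this via a Poincar\'e inequality obtained by the \emph{canonical paths} method, the standard spectral gap technique for reversible Markov chains.

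First I would recast the claim as a Poincar\'e inequality. Self-adjointness of $A_S$ and the min-max principle give $1 - \lambda_2 = \inf\{\mathcal{E}(f,f)/\lVert f\rVert_2^2 : f \perp \mathbf{1},\ f \ne 0\}$, where the Dirichlet form is
\[
\mathcal{E}(f,f) = \langle (I-A_S)f,f\rangle = \frac{1}{2\lvert S\rvert}\sum_{g \in G}\sum_{s \in S}(f(gs)-f(g))^2.
\]
Using the variance identity $\lVert f - \bar f\rVert_2^2 = (2\lvert G\rvert)^{-1}\sum_{g,h}(f(g)-f(h))^2$ (with $\bar f = \lvert G\rvert^{-1}\sum_g f(g)$), the claim reduces to an inequality of the form $\sum_{g,h}(f(g)-f(h))^2 \leq 4\lvert G\rvert\lvert S\rvert D^2\,\mathcal{E}(f,f)$, where $D = \diam(G,S)$.

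Next I would construct a translation-invariant family of canonical paths and estimate the Dirichlet form through them. For each $x \in G$ fix a word $w_x$ of length $\ell_x \leq D$ in $S$ expressing $x$, and for each ordered pair $(g,h) \in G^2$ let $\gamma_{g,h}$ be the path from $g$ to $h$ obtained by reading $w_{g^{-1}h}$ starting at $g$; then $\gamma_{g,h}$ is the left translate by $g$ of $\gamma_{e,g^{-1}h}$. Cauchy--Schwarz applied to the telescoping sum $f(h) - f(g) = \sum_{e \in \gamma_{g,h}} \partial f(e)$, where $\partial f(u,us) := f(us) - f(u)$, yields $(f(g)-f(h))^2 \leq D\sum_{e \in \gamma_{g,h}}(\partial f(e))^2$. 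Summing over pairs and swapping orders of summation produces $\sum_{g,h}(f(g)-f(h))^2 \leq D \sum_e N(e)\,(\partial f(e))^2$, where $N(e)$ counts canonical paths through $e$. Translation invariance forces $N(u,us)$ to depend only on $s \in S$, and each contribution is indexed by a pair $(x,i)$ with the $i$-th letter of $w_x$ equal to $s$; hence the uniform bound $N(u,us) \leq \sum_x \ell_x \leq \lvert G\rvert D$. Combined with the identity $\sum_{(u,s)}(\partial f(u,us))^2 = 2\lvert S\rvert\,\mathcal{E}(f,f)$, this yields the required Poincar\'e inequality.

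The main obstacle is the uniform edge-use bound $N(u,us) \leq \lvert G\rvert D$: it is a cheap pigeonhole estimate, but it is also the only step where the specific structure of the family $(w_x)_{x \in G}$ intervenes, and a uniform bound of this type is needed because in the worst case a single generator may carry almost all of the word-length budget $\sum_x \ell_x$. The exact numerical constant $2\lvert S\rvert$ in the stated denominator depends on conventions for counting directed versus undirected edges and on whether $S$ is assumed symmetric (so that $s$ and $s^{-1}$ may parametrise the same undirected edge); the argument above in fact produces a slightly stronger bound with denominator $\lvert S\rvert D^2$, which implies the claim.
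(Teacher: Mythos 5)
The paper gives no proof of its own here; it simply cites Diaconis--Saloff-Coste. Your canonical-paths/Poincar\'e argument is correct and is precisely the technique underlying the cited Corollary 3.1: the reduction to $\sum_{g,h}(f(g)-f(h))^2 \leq 4\lvert G\rvert\lvert S\rvert D^2\,\mathcal{E}(f,f)$ is right, the translation-invariant path family and the uniform edge-load bound $N(u,us)\leq \sum_x \ell_x \leq \lvert G\rvert D$ are sound, and (as you note) your accounting actually yields the slightly sharper gap $\geq (\lvert S\rvert\diam(G,S)^2)^{-1}$, which certainly implies the stated bound.
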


From this inequality and our diameter bounds, 
we obtain substantial lower bounds on spectral gap 
for Cayley graphs of our groups 
(albeit weaker bounds than would be needed to verify that 
our Cayley graphs are expanders). 

A second invariant of great interest in both practical and 
theoretical contexts is the \emph{mixing time} 
of the pair $(G,S)$, which measures the time taken 
for a (symmetric) lazy random walk on $\Cay(G,S)$ 
to closely approximate the uniform distribution 
(with respect to some metric). 
Here we follow the following convention: let $\delta_e$ 
be the Dirac mass at the identity of $G$, 
and let $T_S = (A_S+I)/2$, where $I$ is the identity operator 
on $G$. 

\begin{defn}
The \emph{$\ell^{\infty}$-mixing time} of the pair $(G,S)$ 
is the smallest $l \in \mathbb{N}$ such that: 
\begin{center}
$\big\lVert T_S ^l \delta_e - \frac{1}{\lvert G \rvert} \chi_G \big\rVert_{\infty} \leq \frac{1}{2 \lvert G \rvert}$. 
\end{center}
\end{defn}

It is clear that the $\ell^{\infty}$-mixing time of $(G,S)$ 
is an upper bound for the diameter. 
Via the spectral gap, we also have a converse inequality. 

\begin{propn}[\cite{Lova} Theorem 5.1]
Suppose the pair $(G,S)$ has spectral gap $\epsilon > 0$. 
Then there exists an absolute constant $C>0$ such that the
$\ell^{\infty}$-mixing time of $(G,S)$ is at most
$(C/\epsilon) \log \lvert G \rvert$. 
\end{propn}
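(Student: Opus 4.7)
The plan is the standard spectral-decomposition argument for reversible Markov chains. Since $T_S = (A_S+I)/2$ is a self-adjoint contraction, the first step is to diagonalise it in $\ell^2(G)$ (counting measure) via an orthonormal basis $\{\phi_i\}_{i=1}^{\lvert G \rvert}$ of eigenfunctions with eigenvalues $\mu_i = (\lambda_i+1)/2 \in [0,1]$. The top eigenfunction is $\phi_1 = \lvert G \rvert^{-1/2} \chi_G$ with $\mu_1 = 1$, while the spectral gap hypothesis translates into $\mu_i \leq \mu_2 = 1 - \epsilon/2$ for all $i \geq 2$.

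The key identity, obtained by expanding $\delta_e$ in $\{\phi_i\}$ and separating the $i=1$ term $\chi_G/\lvert G \rvert$, is
\begin{equation*}
T_S^l \delta_e - \frac{1}{\lvert G \rvert}\chi_G = \sum_{i \geq 2} \mu_i^l \, \overline{\phi_i(e)} \, \phi_i.
\end{equation*}
Evaluating at an arbitrary $x \in G$ and applying Cauchy--Schwarz, together with Parseval ($\sum_i \lvert \phi_i(y) \rvert^2 = \lVert \delta_y \rVert_2^2 = 1$), I would deduce
\begin{equation*}
\bigl\lvert T_S^l \delta_e(x) - 1/\lvert G \rvert \bigr\rvert^2 \leq \Bigl( \sum_{i \geq 2} \mu_i^{2l} \lvert\phi_i(e)\rvert^2 \Bigr) \Bigl( \sum_{i \geq 2} \lvert\phi_i(x)\rvert^2 \Bigr) \leq \mu_2^{2l},
\end{equation*}
so that $\bigl\lVert T_S^l \delta_e - \chi_G/\lvert G \rvert \bigr\rVert_\infty \leq (1-\epsilon/2)^l$. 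To finish, I would solve $(1-\epsilon/2)^l \leq 1/(2\lvert G \rvert)$; using $-\log(1-\epsilon/2) \geq \epsilon/2$ this reduces to $l \geq (2/\epsilon) \log(2\lvert G \rvert)$, which is of the advertised form $(C/\epsilon) \log\lvert G \rvert$ once the factor of $2$ and the additive $\log 2$ are absorbed into $C$ (assuming $\lvert G \rvert \geq 2$, which is harmless).

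The step requiring the most care is the Cauchy--Schwarz split: one must group the eigenvalue weights so that $\mu_i^{2l}$ ends up attached to the $e$-sum (producing the bound $\mu_2^{2l}$) while the $x$-sum contributes only unit Parseval mass; the alternative split $\mu_i^{l/2} \cdot \mu_i^{l/2}$ would only bound the $\ell^\infty$ deviation by a geometric mean of return probabilities, yielding a worse dependence. Equally important is the nonnegativity of the spectrum of $T_S$---precisely the reason for using the lazy operator $(A_S+I)/2$ rather than $A_S$ itself---which lets us bound $\mu_i^l$ uniformly by $\mu_2^l$ for all $i \geq 2$ without having to separately control $\lambda_{\lvert G \rvert}$ near $-1$.
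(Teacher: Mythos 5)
The paper cites this as Theorem 5.1 of Lov\'{a}sz and supplies no proof, so there is nothing to compare against in the source; your spectral-decomposition argument is the standard one and is correct. The chain of estimates is sound: diagonalising $T_S$, isolating the constant eigenfunction, applying Cauchy--Schwarz and Parseval to get $\lVert T_S^l \delta_e - \chi_G/\lvert G\rvert \rVert_\infty \leq (1-\epsilon/2)^l$, and then solving for $l$ using $-\log(1-\epsilon/2)\geq \epsilon/2$, which yields $C=4$ after absorbing the $\log 2$.

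One small remark is off, though it does not affect the proof. You claim the alternative Cauchy--Schwarz split $\mu_i^{l/2}\cdot\mu_i^{l/2}$ would give a worse $\epsilon$-dependence. In fact it gives the same bound: one obtains
\begin{equation*}
\bigl\lvert T_S^l\delta_e(x) - 1/\lvert G\rvert \bigr\rvert^2 \leq \Bigl(\sum_{i\geq 2}\mu_i^l\lvert\phi_i(e)\rvert^2\Bigr)\Bigl(\sum_{i\geq 2}\mu_i^l\lvert\phi_i(x)\rvert^2\Bigr) \leq \mu_2^l\cdot\mu_2^l = \mu_2^{2l},
\end{equation*}
using Parseval on each factor exactly as you did. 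Both splits lead to $\lVert\cdot\rVert_\infty\leq(1-\epsilon/2)^l$; the real point you want to make is the one you state correctly afterwards, namely that laziness is needed so that all $\mu_i\geq 0$ and hence $\mu_i^l\leq\mu_2^l$ for $i\geq 2$ without separately controlling $\lambda_{\lvert G\rvert}$.
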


Using our diameter bounds, we therefore also 
obtain new upper bounds on $\ell^{\infty}$-mixing time. 

\begin{coroll}
Let $q$ be a power of $2$ and let 
$S_n \subseteq G_n = \SL_2(\mathbb{F}_q[t]/(t^n))$ 
be a generating set. Then for all $\epsilon > 0$ 
the spectral gap of $(G_n,S_n)$ is 
$\Omega_{q,\epsilon} 
\big( \lvert S_n \rvert^{-1} \log^{-C-\epsilon} \lvert G_n \rvert \big)$ 
and the $\ell^{\infty}$-mixing time of $(G_n,S_n)$ is 
$O_{q,\epsilon} 
\big( \lvert S_n \rvert \log^{1+C+\epsilon} \lvert G_n \rvert \big)$, 
where $C = 2 \log(7)/\log(4/3) \approx 13.528$. 
\end{coroll}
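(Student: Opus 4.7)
The plan is to combine Theorem \ref{SL2MainThm} with the two general propositions on spectral gap and $\ell^{\infty}$-mixing time stated immediately above the corollary. Observe first that since $\diam(G_n, S_n) \leq \diam^+(G_n, S_n) \leq \diam^+(G_n)$, the directed diameter bound (\ref{SL2MainThmEqn}) immediately yields an undirected diameter bound of the same qualitative form: for any $\epsilon > 0$,
\begin{equation*}
\diam(G_n, S_n) = O_{q,\epsilon}\bigl( \log^{C_0 + \epsilon/2} \lvert G_n \rvert \bigr)\text{,}
\end{equation*}
where $C_0 = \log(7)/\log(4/3)$ is the constant appearing in Theorem \ref{SL2MainThm}.

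Next I would substitute this bound into the Diaconis--Saloff-Coste inequality to obtain
\begin{equation*}
1 - \lambda_2 \geq \frac{1}{2 \lvert S_n \rvert \diam(G_n,S_n)^2} = \Omega_{q,\epsilon}\bigl( \lvert S_n \rvert^{-1} \log^{-2C_0 - \epsilon} \lvert G_n \rvert \bigr)\text{.}
\end{equation*}
Since $C = 2 C_0$, this is exactly the spectral gap claim of the corollary; the doubling of the constant from $C_0$ to $C$ is accounted for precisely by squaring the diameter. Feeding this lower bound on the spectral gap into the Lov\'asz mixing time proposition gives an $\ell^{\infty}$-mixing time of at most $(C'' / \epsilon_{\mathrm{gap}}) \log \lvert G_n \rvert$ for some absolute constant $C''$, which after substitution becomes the desired $O_{q,\epsilon}\bigl( \lvert S_n \rvert \log^{1 + C + \epsilon} \lvert G_n \rvert \bigr)$; the extra factor of $\log \lvert G_n \rvert$ explains the shift from $C$ to $1+C$ in the exponent.

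The only delicate point is bookkeeping with $\epsilon$: to land the conclusion with the stated $\epsilon$ one should invoke Theorem \ref{SL2MainThm} with a smaller parameter (such as $\epsilon/2$, as above) so that after squaring the exponent and absorbing the extra factor of $\log \lvert G_n \rvert$ the error term stays within the allowed tolerance. Beyond this routine check there is no real obstacle: the corollary is a direct three-step chain of two general inequalities applied to Theorem \ref{SL2MainThm}, and in particular no further group-theoretic analysis of $\SL_2(\mathbb{F}_q[t]/(t^n))$ is required.
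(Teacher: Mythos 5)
Your proposal is correct and follows precisely the chain of reasoning the paper intends: pass from $\diam^+$ to $\diam$, feed Theorem \ref{SL2MainThm} (with parameter $\epsilon/2$) into the Diaconis--Saloff-Coste bound to get the spectral gap with exponent $2C_0 + \epsilon = C + \epsilon$, then apply the Lov\'asz proposition to pick up the extra factor of $\log\lvert G_n\rvert$. This is the same three-step argument the paper leaves implicit, and your $\epsilon$-bookkeeping is handled correctly.
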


\begin{coroll}
Let $\Gamma$ be the Fabrykowski-Gupta group and let 
$S_n \subseteq G_n = \Gamma/\Stab_{\Gamma}(n)$ 
be a generating set. 
Then the spectral gap of $(G_n,S_n)$ 
is $\Omega \big( \lvert S_n \rvert^{-1} \log^{-C} \lvert G_n \rvert \big)$ 
and the $\ell^{\infty}$-mixing time of $(G_n,S_n)$ is 
$O \big( \lvert S_n \rvert \log^{1+C} \lvert G_n \rvert \big)$, where 
$C = 2 \log(72272200)/\log(3) \approx 32.943$. 
\end{coroll}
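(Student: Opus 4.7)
The plan is to chain together Theorem \ref{FabGupMainThm} with the two quoted propositions of Diaconis--Saloff-Coste and Lovász, tracking how the exponent of the polylogarithm doubles (and then gains one more) through the successive estimates. Specifically, Theorem \ref{FabGupMainThm} gives a polylogarithmic bound on $\diam^+(G_n)$ with exponent $C_0 = \log(72272200)/\log(3) \approx 16.472$, and the elementary inequality $\diam(G_n,S_n) \leq \diam^+(G_n,S_n) \leq \diam^+(G_n)$ transfers this to the undirected diameter with the same exponent.

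First I would feed this undirected diameter bound into the Diaconis--Saloff-Coste inequality quoted in the previous subsection: the spectral gap of $(G_n,S_n)$ is at least $(2\lvert S_n \rvert \diam(G_n,S_n)^2)^{-1}$. Squaring the polylogarithm of exponent $C_0$ yields a polylogarithm of exponent $2 C_0 = C$, so the gap is $\Omega\bigl(\lvert S_n \rvert^{-1} \log^{-C} \lvert G_n \rvert\bigr)$, matching the stated constant $C = 2\log(72272200)/\log(3)$.

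Next I would substitute this lower bound on the spectral gap $\epsilon$ into the Lovász inequality, which asserts the existence of an absolute constant $C' > 0$ such that the $\ell^\infty$-mixing time is at most $(C'/\epsilon)\log\lvert G_n\rvert$. Taking $\epsilon = \Omega(\lvert S_n\rvert^{-1}\log^{-C}\lvert G_n\rvert)$ and multiplying through by the extra factor $\log\lvert G_n\rvert$ gives the mixing-time bound $O\bigl(\lvert S_n\rvert \log^{1+C}\lvert G_n\rvert\bigr)$, again with the same $C$.

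There is no serious obstacle here; the argument is essentially a bookkeeping exercise and is identical in structure to the proof of the analogous corollary for $\SL_2(\mathbb{F}_q[[t]])$ stated immediately before. The only point worth emphasizing is that the hypothesis of \cite{DiacSalo} Corollary 3.1 requires a \emph{symmetric} generating set, so one technically works with $S_n \cup S_n^{-1}$ in place of $S_n$, but this only changes $\lvert S_n \rvert$ by a factor of $2$ and hence is absorbed into the $\Omega$ and $O$ constants.
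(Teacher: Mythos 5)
Your proposal is correct and follows exactly the route the paper intends: feed the $\diam^+$ bound of Theorem \ref{FabGupMainThm} (with exponent $C_0 = \log(72272200)/\log(3)$) into the Diaconis--Saloff-Coste inequality to get the spectral gap with exponent $2C_0 = C$, then into the Lov\'{a}sz inequality to get the mixing time with exponent $1+C$. Your remark about symmetrizing the generating set is a sensible sanity check and is indeed absorbed into the implicit constants.
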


\begin{coroll}
Let $\Gamma$ be a compact $p$-adic analytic group 
of dimension $d$; 
let $H$ be an open characteristic powerful pro-$p$ subgroup; 
let $(H_i)_i$ be the lower central $p$-series of $H$, 
and let $S_n \subseteq G_n =  \Gamma/H_n$ be a generating set. 
Then the spectral gap of $(G_n,S_n)$ is 
$\Omega_{\Gamma} \big( \lvert S_n \rvert^{-1} \lvert G_n \rvert^{-2/d} \big)$ and the 
$\ell^{\infty}$-mixing time of $(G_n,S_n)$ is 
$O_{\Gamma} \big( \lvert S_n \rvert \lvert G_n \rvert^{2/d} \log \lvert G_n \rvert \big)$. 
\end{coroll}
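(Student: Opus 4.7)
The plan is to combine Theorem \ref{padicdiamthm} with the two general propositions just recalled in this section. First, I would invoke the trivial inequality $\diam(G_n,S_n) \leq \diam^+(G_n,S_n)$ (every positive word is in particular a word), which together with Theorem \ref{padicdiamthm} yields the undirected diameter bound $\diam(G_n,S_n) = O_\Gamma(\lvert G_n \rvert^{1/d})$ for an arbitrary generating set $S_n$ of $G_n$. If $S_n$ is not symmetric, we symmetrize; this at worst doubles $\lvert S_n \rvert$, and the factor of $2$ is absorbed into the implicit constants.

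Next I would apply Proposition \cite{DiacSalo} Corollary 3.1 to the pair $(G_n,S_n)$, which gives
\[
1 - \lambda_2 \;\geq\; \frac{1}{2 \lvert S_n \rvert \diam(G_n,S_n)^2} \;=\; \Omega_\Gamma \big( \lvert S_n \rvert^{-1} \lvert G_n \rvert^{-2/d} \big),
\]
establishing the claimed lower bound on the spectral gap.

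Finally I would substitute this lower bound into Proposition \cite{Lova} Theorem 5.1, which gives an $\ell^\infty$-mixing time of at most $(C/\epsilon) \log \lvert G_n \rvert$ when the spectral gap is $\epsilon$. Taking $\epsilon = \Omega_\Gamma(\lvert S_n \rvert^{-1} \lvert G_n \rvert^{-2/d})$ from the previous step yields the advertised mixing-time bound $O_\Gamma \big( \lvert S_n \rvert \lvert G_n \rvert^{2/d} \log \lvert G_n \rvert \big)$.

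There is no serious obstacle: the whole argument is a routine composition of a diameter bound with two black-box inequalities. The only mild subtlety is reconciling the possibly asymmetric $S_n$ with the symmetry implicit in the definition of $A_{S_n}$ and the hypotheses of the cited propositions; as noted above this is handled transparently by passing to $S_n \cup S_n^{-1}$ and absorbing the resulting constants. The same template of course also proves the two preceding corollaries in this section, simply by substituting the relevant diameter bounds from Theorems \ref{SL2MainThm} and \ref{FabGupMainThm} in place of Theorem \ref{padicdiamthm}.
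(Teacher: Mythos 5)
Your proposal is exactly the intended argument: chain $\diam(G_n,S_n)\leq\diam^+(G_n)=O_\Gamma(\lvert G_n\rvert^{1/d})$ from Theorem \ref{padicdiamthm} through the Diaconis--Saloff-Coste bound for the spectral gap and then the Lov\'asz bound for mixing time. This matches what the paper does (the corollary is stated without proof as an immediate consequence of those three ingredients), and your remark about symmetrization is consistent with the paper's convention of using the symmetric adjacency operator.
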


\end{document}